%
%   Primary Non-QE Graphs on Six Vertices
%   August 04, 2022
%
%%%%%%%%%%%%%%%%%%%%%%%%%%%%%%%%%%%%%%%%%%%%%%%%%%%%%%%%%%%%%%%%
\documentclass[12pt,a4paper]{article}
\usepackage{amsmath,amssymb,amsthm,bm}
\usepackage{graphicx}
\usepackage{txfonts}

\newtheorem{theorem}{Theorem}[section]
\newtheorem{proposition}[theorem]{Proposition}

\newtheorem{lemma}[theorem]{Lemma}
\newtheorem{remark}[theorem]{Remark}

\newtheorem{definition}[theorem]{Definition}

\newcommand{\1}{{\bm 1}}

\numberwithin{equation}{section}

%%%%%%%%%%%%%%%%%%%%%%%%%%%%%%%%%%%%%%%%%%%%%%%%%%%%%%%%%%%%%%%%%%%%%%%%%

\begin{document}

\begin{center}
\large\bf
Primary Non-QE Graphs on Six Vertices
\end{center}

\bigskip

\begin{center}
Nobuaki Obata\\
Graduate School of Information Sciences \\
Tohoku University\\
Sendai 980-8579 Japan \\
obata@tohoku.ac.jp
\end{center}

\bigskip

\begin{quote}
\textbf{Abstract}\enspace
A connected graph is called of non-QE class if
it does not admit a quadratic embedding in a Euclidean space.
A non-QE graph is called primary if it does not contain
a non-QE graph as an isometrically embedded proper subgraph.
The graphs on six vertices are completely classified 
into the classes of QE graphs,
of non-QE graphs, and of primary non-QE graphs.
\end{quote}

\begin{quote}
\textbf{Key words}\enspace
distance matrix,
quadratic embedding,
quadratic embedding constant,
primary non-QE graph
\end{quote}

\begin{quote}
\textbf{MSC}\enspace
primary:05C50  \,\,  secondary:05C12 05C76
\end{quote}

%%%%%%%%%%%%%%%%%%%%%%
\section{Introduction}

A \textit{quadratic embedding} of a (finite connected) graph $G=(V,E)$
in a Euclidean space $\mathbb{R}^N$ is a map
$\varphi:V\rightarrow \mathbb{R}^N$ satisfying
\[
\|\varphi(x)-\varphi(y)\|^2
=d(x,y),
\qquad
x,y\in V,
\]
where the left-hand side is the square of the Euclidean distance
between two points $\varphi(x)$ and $\varphi(y)$,
and the right-hand side is the graph distance.
We say that a graph $G$ is \textit{of QE class} or \textit{of non-QE class}
according as it admits a quadratic embedding or not.
The concept of quadratic embedding (of a metric space in general)
traces back to the early works of 
Schoenberg \cite{Schoenberg1935,Schoenberg1938}
and has been discussed along with the so-called Euclidean distance geometry
\cite{Alfakih2018, Balaji-Bapat2007, Jaklic-Modic2013, Jaklic-Modic2014, 
Liberti-Lavor-Maculan-Mucherino2014},
for wider aspects see also \cite{Deza-Laurent1997,
Graham-Winkler1985, Maehata2013} and references cited therein.

There are interesting questions both on graphs of QE class 
and on those of non-QE class.
One of the important questions on non-QE graphs 
is to obtain a sufficiently rich list of non-QE graphs.
If a graph $H$ is isometrically embedded in a graph $G$
and if $G$ is of QE class, so is $H$.
In other words, if $H$ is isometrically embedded in a graph $G$
and if $H$ is of non-QE class, so is $G$.
Thus, upon classifying graphs of non-QE class
it is important to specify a \textit{primary} non-QE graph,
that is, a non-QE graph $G$ which does not 
contain a non-QE graph $H$ as an
isometrically embedded proper subgraph.

In this paper we complete the classification of graphs on six vertices,
which is seen as a milestone of our new approach of 
the QE constant explained below.
As a result, among 112 graphs on six vertices
there are 3 primary non-QE graphs,
24 non-primary non-QE graphs,
and 85 QE graphs.
Employing the list of small connected graphs due to McKay \cite{McKay},
which is reproduced in Appendix C,
the results are stated as follows.

\begin{theorem}\label{01thm:primary non-QE graphs}
Among 112 graphs on six vertices 
the primary non-QE graphs are G6-30, G6-60 and G6-84,
see Figures \ref{fig:Non-QE graphs on 6 vertices (1)}
and \ref{fig:Non-QE graphs on 6 vertices (2)}.
\end{theorem}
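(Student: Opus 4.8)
The plan is to proceed via the quadratic embedding constant. The argument rests on the classical characterization (due in essence to Schoenberg, and to be recalled in the next section) that a connected graph $G$ with distance matrix $D$ is of QE class if and only if $D$ is conditionally negative definite, equivalently if and only if the quadratic embedding constant
\[
\mathrm{QEC}(G)=\max\{\langle Df,f\rangle : f\in\mathbb{R}^V,\ \langle f,f\rangle=1,\ \langle \1,f\rangle=0\}
\]
is $\le 0$. So the first and computationally heaviest step is to determine the sign of $\mathrm{QEC}(G)$ for each of the $112$ connected graphs on six vertices. Rather than a blind eigenvalue computation, I would organize the $112$ graphs by exploiting the structural tools developed for $\mathrm{QEC}$ --- its behaviour under adding edges and under forming joins, together with the explicit values already known for paths, cycles, complete multipartite graphs and similar families --- so that large blocks of graphs are disposed of at once, leaving only a modest number of borderline cases to be treated by an exact evaluation of the relevant characteristic polynomial (exact arithmetic being essential whenever $\mathrm{QEC}$ is close to $0$). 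This produces the $85$ QE graphs and the $27$ non-QE graphs.

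Next I reduce the test for primality to a small finite check. If $H$ is a proper subgraph of $G$ on the full vertex set $V$, then for an edge $xy\in E(G)\setminus E(H)$ one has $d_H(x,y)\ne 1=d_G(x,y)$, so no proper spanning subgraph is isometrically embedded; hence an isometrically embedded proper subgraph has at most five vertices. Moreover, if $H$ is any isometrically embedded subgraph on a vertex set $W\subsetneq V$, then the induced subgraph $G[W]$ satisfies $d_G|_W\le d_{G[W]}\le d_H=d_G|_W$, so $G[W]$ is itself isometrically embedded in $G$, and $H$ is isometrically embedded in $G[W]$; by the monotonicity principle stated in the introduction, $G[W]$ is then non-QE whenever $H$ is. Therefore $G$ is primary non-QE precisely when $G$ is non-QE and none of its isometrically embedded induced subgraphs on five vertices is non-QE. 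Since it is known that every graph on at most four vertices is of QE class, nothing smaller needs to be examined.

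Finally, for each of the $27$ non-QE graphs I would list its six induced subgraphs $G-v$ on five vertices, decide which of them are isometrically embedded --- equivalently, for which $v$ one has $d_{G-v}=d_G$ on $V\setminus\{v\}$, an easy check given the small diameters involved --- and compare the isometric ones against the classification of non-QE graphs on five vertices recalled earlier. For $24$ of the $27$ graphs at least one such subgraph is non-QE, so these are non-primary; for G6-30, G6-60 and G6-84 every isometrically embedded five-vertex induced subgraph turns out to be of QE class, so these three are primary. The main obstacle is the first step: the sign determination of $\mathrm{QEC}$ over all $112$ graphs must be made genuinely rigorous rather than numerical, and the borderline graphs require either an exact spectral computation or a clean closed-form inequality, which is where the real work lies; the primality bookkeeping in the last step is routine by comparison.
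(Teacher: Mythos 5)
Your proposal is correct and follows essentially the same route as the paper: determine the QE/non-QE status of all 112 graphs by combining structural results (products, joins, known QEC formulas, explicit embeddings) with exact QEC computations for the few borderline cases, and then reduce primality to checking whether a non-QE graph contains a non-QE five-vertex graph (necessarily G5-10 or G5-17) isometrically embedded, using that all graphs on at most four vertices are of QE class. The only cosmetic difference is directional: you test each non-QE six-vertex graph top-down via its isometric five-vertex induced subgraphs, while the paper enumerates bottom-up the one-vertex extensions of G5-10 and G5-17 (isometry being automatic since these have diameter $2$); the two bookkeeping schemes are logically equivalent.
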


\begin{theorem}\label{01thm:non-primary non-QE graphs}
Among 112 graphs on six vertices 
the non-primary non-QE graphs are
G6-22, G6-36, G6-40, G6-45, G6-49, 
G6-53, G6-54, G6-55, G6-64, G6-67, G6-71, G6-72, G6-73,
G6-78, G6-85, G6-86, G6-88, G6-91, G6-94, G6-96,
G6-101, G6-102, G6-103 and G6-107.
Moreover, these graphs contain G5-10 or G5-17 as an
isometrically embedded proper subgraph, 
see Figure \ref{fig:Non-QE graphs on 5 vertices}.
\end{theorem}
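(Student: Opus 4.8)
The argument rests on the monotonicity principle recalled in the Introduction, together with two finite verifications. I would begin with a structural reduction. Every graph on at most four vertices is of QE class, and by the five-vertex classification displayed in Figure~\ref{fig:Non-QE graphs on 5 vertices} the only non-QE graphs on five vertices are G5-10 and G5-17. Furthermore, inserting an edge into a graph strictly decreases the distance-matrix entry for the endpoints of that edge, so a subgraph $H\subseteq G$ with $V(H)=S$ that is isometrically embedded in $G$ must coincide with the induced subgraph $G[S]$. Hence a six-vertex non-QE graph is non-primary precisely when one of its vertex-deleted (five-vertex) subgraphs is isometrically embedded and isomorphic to G5-10 or to G5-17. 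Granting the determination, carried out in the body of the paper by evaluating the quadratic embedding constant $\mathrm{QEC}(G)=\max\{\langle f,Df\rangle:\ \|f\|=1,\ \langle f,\1\rangle=0\}$ with $D$ the distance matrix (a graph being QE exactly when $\mathrm{QEC}(G)\le0$), of which of the $112$ graphs are non-QE, and granting Theorem~\ref{01thm:primary non-QE graphs}, which names the $3$ primary non-QE graphs among them, the $24$ graphs of the list are exactly the remaining non-QE graphs; so all that is left is to display the advertised proper non-QE subgraph of each.

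The second step is to check that G5-10 and G5-17 are themselves non-QE. From the $5\times5$ distance matrix $D$ of each, one verifies $\mathrm{QEC}>0$ --- for instance by exhibiting a single $f$ with $\langle f,\1\rangle=0$ and $\langle f,Df\rangle>0$, or by computing the largest eigenvalue of $PDP$ with $P=I-\tfrac15\,\1\1^{\top}$ and noting it is strictly positive. These computations are already part of the five-vertex analysis behind Figure~\ref{fig:Non-QE graphs on 5 vertices}.

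The third and principal step is the case analysis. Writing $G$ for one of the $24$ graphs, I would read its adjacency matrix from McKay's list (Appendix~C), form the $6\times6$ distance matrix, and for each vertex $v$ compare the distance matrix of $G-v$ with the $5\times5$ principal submatrix of the distance matrix of $G$ on $V(G)\setminus\{v\}$. Equality of these two matrices is exactly the condition that $G-v$ be isometrically embedded --- it fails precisely when some geodesic between two retained vertices is forced through $v$ --- and whenever it holds one then tests whether $G-v$ is isomorphic to G5-10 or to G5-17. In each of the $24$ cases at least one of the six candidates $G-v$ passes both tests, and it is a proper subgraph since $6>5$. By monotonicity each of the $24$ graphs is non-QE and non-primary, which is the assertion of the theorem.

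Thus the content of the proof lies entirely in the bookkeeping of the third step --- at most six distance-matrix comparisons and isomorphism checks per graph --- and the single point demanding care, easily overlooked, is to enforce the isometric, not merely the induced-subgraph, requirement: a vertex-deleted subgraph can be an induced copy of G5-10 or G5-17 and still not be isometrically embedded because deleting the sixth vertex lengthens some geodesic, in which case it does not transmit the non-QE property. Accordingly I would present, for each $G$ in the list, the chosen five-vertex subset together with its distance matrix, so that the immediate agreement with the corresponding principal submatrix can be read off; beyond this tabulation I foresee no conceptual obstacle.
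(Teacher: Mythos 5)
Your step~1 reduction (a non-primary non-QE graph on six vertices must contain an isometrically embedded, hence induced, vertex-deleted copy of G5-10 or G5-17) and your step~3 finite verification are in substance the paper's argument (Propositions \ref{03prop:non-QE containing G5-10} and \ref{03prop:non-QE containing G5-17}). The genuine problem is how you obtain the list of the 24 graphs: you ``grant'' Theorem \ref{01thm:primary non-QE graphs} and the determination of which of the 112 graphs are non-QE, and declare the 24 to be the non-QE graphs left over after removing the three primary ones. But in the paper both of those ingredients sit \emph{downstream} of the present theorem: the 24 graphs are certified non-QE precisely because they contain G5-10 or G5-17 (Subsection 3.2), and the primality of G6-30, G6-60, G6-84 is deduced in Subsection 4.2 from the completeness of that same enumeration. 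So, as written, your completeness direction is circular relative to the paper's logic. The paper avoids this entirely by arguing forward: by the step~1 reduction every non-primary non-QE graph on six vertices is obtained from G5-10 or G5-17 by attaching one new vertex, so one enumerates \emph{all} such one-vertex extensions and finds exactly the 24 listed graphs; each of them is non-QE and non-primary by Lemma \ref{02lem:heredity}, and completeness needs no information about the remaining 88 graphs nor about Theorem \ref{01thm:primary non-QE graphs}. Your proof can be repaired either by adopting this forward enumeration, or by checking independently that no vertex-deleted subgraph of G6-30, G6-60 or G6-84 is an isometric copy of G5-10 or G5-17 and that the other 85 graphs are QE --- but the latter route imports most of the rest of the classification into the proof of this single theorem.

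A second, smaller point: the ``single point demanding care'' you highlight --- that an induced copy of G5-10 or G5-17 might fail to be isometrically embedded --- cannot actually occur. Since $\mathrm{diam}(\text{G5-10})=\mathrm{diam}(\text{G5-17})=2$, Lemma \ref{02lem:isometric embedding}(2) guarantees that every induced copy of either graph is automatically isometrically embedded, whatever the neighbours of the sixth vertex are; this is exactly the observation that makes the paper's enumeration painless. Your per-graph comparison of the distance matrix of $G-v$ with the corresponding principal submatrix is therefore correct but redundant: an isomorphism test on induced five-vertex subgraphs already suffices.
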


\begin{theorem}\label{01thm:QE graphs}
Among 112 graphs on six vertices 
the graphs not listed in
Theorems \ref{01thm:primary non-QE graphs}
and \ref{01thm:non-primary non-QE graphs}
are of QE class.
\end{theorem}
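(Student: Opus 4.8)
The plan is to reduce Theorem~\ref{01thm:QE graphs} to a finite linear-algebra verification. Recall Schoenberg's criterion: a finite connected graph $G=(V,E)$ with distance matrix $D=[d(x,y)]_{x,y\in V}$ is of QE class if and only if $D$ is conditionally negative definite, that is, $\langle f,Df\rangle\le 0$ for every $f\in\mathbb{R}^V$ with $\langle f,\1\rangle=0$; equivalently, $\mathrm{QEC}(G):=\max\{\langle f,Df\rangle:\|f\|=1,\ \langle f,\1\rangle=0\}\le 0$. Since Theorems~\ref{01thm:primary non-QE graphs} and \ref{01thm:non-primary non-QE graphs} exhibit exactly $3+24=27$ non-QE graphs among the $112$, proving Theorem~\ref{01thm:QE graphs} amounts to checking $\mathrm{QEC}(G)\le 0$ for each of the remaining $85$ graphs.

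Rather than treat $85$ distance matrices one by one, I would organize the argument around the monotonicity principle recalled in the introduction: if $H$ is isometrically embedded in $G$ and $G$ is of QE class, then so is $H$. Hence it suffices to single out those members of the $85$ that are \emph{maximal}, in the sense of not being a proper isometrically embedded subgraph of another graph in the list, to verify the QE class only for these, and to deduce the rest. In parallel I would invoke the QE class of standard families and, where available, known formulas for $\mathrm{QEC}$ under graph operations: complete graphs, certain complete multipartite graphs, trees (in particular the path $P_6$), cycles, and graphs arising as a complete join $G_1+G_2$. Sorting the $85$ graphs by diameter (only $K_6$ has diameter $1$; the bulk have diameter $2$ or $3$) and by such structural features collapses the verification to a short list of genuinely new computations.

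For each graph on that short list I would write down the $6\times6$ integer distance matrix $D$, restrict the associated quadratic form to the hyperplane $\1^{\perp}$ by choosing an orthonormal basis of $\1^{\perp}$, and obtain a $5\times5$ symmetric matrix $\widetilde D$; then $\mathrm{QEC}(G)\le 0$ if and only if $\widetilde D$ is negative semidefinite, which I would certify exactly --- via the signs of the leading principal minors, via the characteristic polynomial of $\widetilde D$ (degree $5$, integer coefficients), or, most transparently, by exhibiting an explicit map $\varphi:V\to\mathbb{R}^{5}$ with $\|\varphi(x)-\varphi(y)\|^{2}=d(x,y)$. As a consistency check I would also confirm $\mathrm{QEC}(G)>0$ for the $27$ graphs of Theorems~\ref{01thm:primary non-QE graphs} and \ref{01thm:non-primary non-QE graphs}, so that the three classes really do partition the $112$ graphs.

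The difficulty here is not conceptual but one of bookkeeping: there is no single slick identity covering all $85$ graphs, so the work lies in (i) choosing the grouping and the maximal sublist so that only a handful of distance matrices need explicit spectral analysis, (ii) being careful that each claimed subgraph embedding is genuinely distance-preserving and not merely an induced subgraph --- an induced subgraph can have strictly larger internal distances and then fails to be isometric --- and (iii) handling the borderline graphs with $\mathrm{QEC}(G)=0$ rigorously, since numerically these are indistinguishable from slightly positive or slightly negative values; for those I would rely on exact arithmetic or on an explicit embedding. Step (iii), together with pinning down which graphs are QE-maximal, is where I expect the real effort to go.
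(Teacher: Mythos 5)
Your overall framing (Schoenberg's criterion, $\mathrm{QEC}(G)\le 0$, a finite exact verification) is sound, but the one structural idea you propose for organizing it does not work. You suggest verifying the QE property only for the ``maximal'' members of the 85 graphs, i.e.\ those not occurring as a proper isometrically embedded subgraph of another graph in the list, and deducing the rest by the monotonicity of Lemma \ref{02lem:heredity}. Within a list of graphs all on six vertices this reduction is vacuous: an isometrically embedded subgraph is necessarily an induced subgraph (Lemma \ref{02lem:isometric embedding}), so a \emph{proper} isometrically embedded subgraph lives on at most five vertices, and hence no six-vertex graph in the list can be a proper isometrically embedded subgraph of another. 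Monotonicity in the direction you invoke could only help if each six-vertex graph were embedded isometrically into some \emph{larger} graph already known to be of QE class, which you neither construct nor verify. Consequently your ``short list of genuinely new computations'' is in reality almost the entire list: the standard families you cite (the complete graph, the six trees, the cycle $C_6$, a few complete multipartite graphs and joins of regular graphs) cover only a small fraction of the 85, and what remains is an unexecuted case-by-case check of dozens of $6\times 6$ distance matrices with no computations actually carried out.

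The paper closes exactly this gap with tools absent from your plan: closure of the QE class under star products and Cartesian products (Propositions \ref{03prop:star products of QE} and \ref{03prop:Cartesian products of QE}), which alone disposes of 53 of the 112 graphs; the formulas for complete multipartite graphs, for joins of two regular graphs, for $K_5\wedge K_{m,1}$ (Appendix A) and for $K_6\backslash P_4$ (Appendix B); an explicit embedding construction for graphs with a pendant edge and extensions of a quadratic embedding of an isometrically embedded $C_4$ (as illustrated for G6-79); and finally Lagrange-multiplier computations of $\mathrm{QEC}$ for the seven graphs left over, four of which (G6-32, G6-59, G6-66, G6-106) turn out to be of QE class. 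Your fallback of exact negative-semidefiniteness tests of the form restricted to $\1^{\perp}$ would in principle certify each individual graph, and your cautions about induced versus isometric subgraphs and about borderline cases with $\mathrm{QEC}(G)=0$ are well taken; but as written the proposal is a plan rather than a proof, and its central organizing step buys nothing.
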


\begin{figure}[hbt]
\begin{center}
\includegraphics[width=60pt]{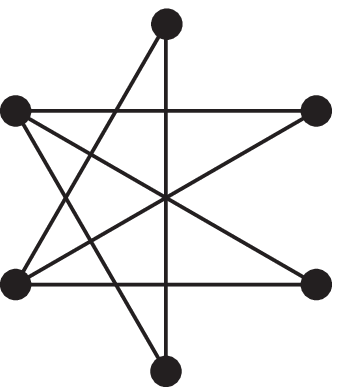}
\quad \raisebox{20pt}{$\cong$} \quad
\includegraphics[width=64pt]{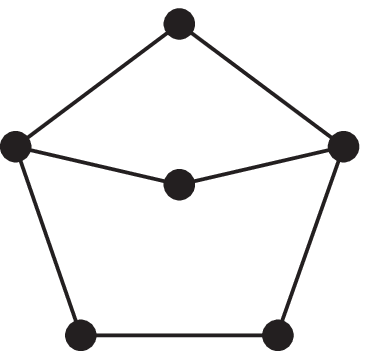}
\qquad
\includegraphics[width=60pt]{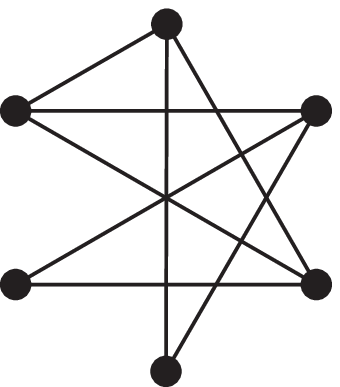}
\quad \raisebox{20pt}{$\cong$} \quad
\includegraphics[width=60pt]{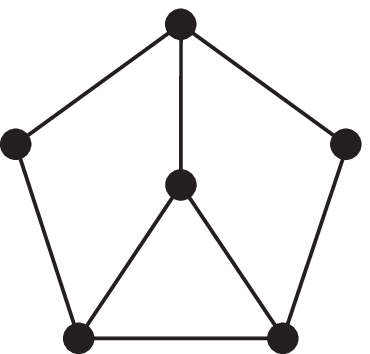}
\end{center}
\caption{G6-30 (left) and G6-60 (right)}
\label{fig:Non-QE graphs on 6 vertices (1)}
\end{figure}

\begin{figure}[hbt]
\begin{center}
\includegraphics[width=60pt]{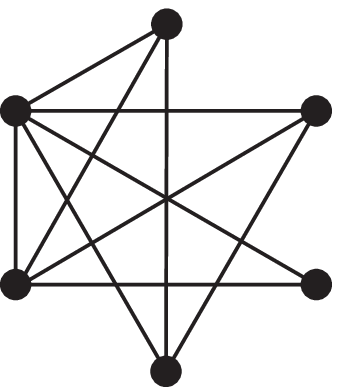}
\quad \raisebox{20pt}{$\cong$} \quad
\includegraphics[width=64pt]{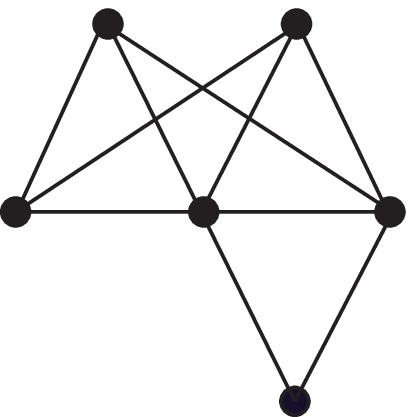}
\end{center}
\caption{G6-84}
\label{fig:Non-QE graphs on 6 vertices (2)}
\end{figure}

We briefly mention our quantitative approach.
Let $G=(V,E)$ be a graph (always assumed to be finite and connected).
Let $C(V)$ be the space of $\mathbb{R}$-valued functions on $V$
equipped with
the canonical inner product denoted by $\langle\cdot,\cdot\rangle$.
The distance matrix $D=[d(x,y)]_{x,y\in V}$ acts on 
$C(V)$ from the left as usual.
It follow from Schoenberg's theorem 
\cite{Schoenberg1935,Schoenberg1938}
that $G$ is of QE class
if and only if the distance matrix  $D=[d(x,y)]_{x,y\in V}$ is
conditionally negative definite, namely,
$\langle f, Df\rangle\le0$ for all $f\in C(V)$ with
$\langle \mathbf{1},f\rangle=0$,
where $\mathbf{1}\in C(V)$ stands for 
the constant function taking value one.
It is then natural to consider the conditional maximum
defined by
\begin{equation}\label{1eqn:def od QEC(G)}
\mathrm{QEC}(G)
=\max\{\langle f,Df \rangle\,;\, 
\langle f,f \rangle=1, \, \langle \1,f \rangle=0\},
\end{equation}
which is called 
the \textit{quadratic embedding constant} 
(\textit{QE constant} for short) of $G$.
Apparently, the QE constant provides a criterion for a graph
to be of QE class or not.
In fact, a graph $G$ on at least two vertices is of QE class
if and only if $\mathrm{QEC}(G)\le0$.

In the recent paper \cite{Obata-Zakiyyah2018} 
we started a systematic study of the QE constant
as a new quantitative invariant of a graph,
where we obtained basic formulas as well as examples.
Since then we have collected explicit values of the QE constants
of particular graphs \cite{Irawan-Sugeng2021,
Mlotkowski2022,Obata2017,Purwaningsih-Sugeng2021}
and formulas in relation to graph operations
\cite{Lou-Obata-Huang2022, MO-2018}.
We are also interested in classifying
the finite connected graphs in terms of the QE constants
\cite{Baskoro-Obata2021}, where
we started an attempt to characterize graphs along with
the increasing sequence of the QE constants of paths.

Moreover, the QE constant is interesting from the point of
view of spectral analysis of distance matrices
\cite{Aouchiche-Hansen2014, Xue-Liu-Shu2020}.
In fact, $\mathrm{QEC}(G)$ lies between
the largest and the second largest eigenvalues of 
the distance matrix $D$
in such a way that $\lambda_2(D)\le \mathrm{QEC}(G)<\lambda_1(D)$.
It is an interesting open question to characterize
graphs $G$ with $\lambda_2(D)=\mathrm{QEC}(G)$.
Any transmission-regular graph,
of which the distance matrix has a constant row sum by definition,
has this property and also any path $P_{2n}$ 
on even vertices \cite{Mlotkowski2022}.
While, the equality holds for any distance regular graph
and all distance-regular graphs with $\mathrm{QEC}(G)\le0$
are listed in \cite{Koolen-Shpectorov1994}.

As another interesting aspect of the QE constant
we mention a relation to noncommutative harmonic analysis.
The entry-wise exponential matrix of the distance matrix
is called the \textit{$Q$-matrix}
and is defined by
$Q=Q_q=[q^{d(x,y)}]_{x,y\in V}$, where $-1\le q\le 1$.
Let $\pi(G)\subset [-1,1]$ be the region of $q$ such
that $Q=Q_q$ is positive definite (allowing zero eigenvalue).
By the well-known general theory on
positive definite kernels
we know that $D$ is conditionally negative definite
if and only if $Q=Q_q$ is positive definite for all $0\le q\le1$,
that is $[0,1]\subset \pi(G)$.
The last condition is essential for noncommutative
harmonic analysis, first appeared in \cite{Bozejko89}
in relation to the length function of a free group,
and also for $q$-deformed spectral analysis of growing graphs
\cite{Hora-Obata2007}.
Here an interesting question
related to $\mathrm{QEC}(G)$ arises
to determine the positivity region $\pi(G)$.
Some concrete examples are found in \cite{Obata2011}
and the graphs $G$ with $\pi(G)=[-1,1]$
are recently characterized in \cite{Tanaka2022}.

This paper is organized as follows.
In Section 2 we prepare basic notions and notations,
for more details see \cite{MO-2018,Obata-Zakiyyah2018}.
In Section 3 we mention some criteria
for a graph to be of QE or of non-QE class
and sieve the graphs on six vertices.
In particular, we achieve Theorem \ref{01thm:non-primary non-QE graphs}
by showing that a non-primary non-QE graph
on six vertices contains G5-10 or G5-17 (these are
the primary non-QE graphs on five vertices)
as an isometrically embedded subgraph.
In Section 4 we complete the classification
by calculating the QE constants of the remainders
which are not judged during the previous section.
Appendices A and B contain formulas for
$\mathrm{QEC}(K_n\wedge K_{m,1})$ and
$\mathrm{QEC}(K_n\backslash P_4)$, respectively.
Appendix C contains the list of graphs on six vertices,
which is reproduced from McKay \cite{McKay}.

\bigskip
{\bfseries Notation:}
We write ``Gm-n'' for the graph on $m$ vertices
with the item number $n$ in the list of
small connected graphs due to McKay \cite{McKay}.
The list of graphs on six vertices is reproduced in
Appendix C.

\bigskip
{\bfseries Acknowledgements:}
This work is supported by
JSPS Grant-in-Aid for Scientific Research No.~19H01789.

\section{Quadratic Embedding of Graphs}

\subsection{Distance Matrices}

A \textit{graph} $G=(V,E)$ is a pair of 
a non-empty set $V$ of vertices and a set $E$ of edges,
where an edge is a set of two distinct vertices in $V$.
For $x,y\in V$ we write $x\sim y$ if $\{x,y\}\in E$.
A graph is called \textit{connected} if 
any pair of vertices $x,y\in V$ there exists
a finite sequence of vertices $x_0,x_1,\dots,x_m\in V$ 
such that $x=x_0\sim x_1\sim \dotsb\sim x_m=y$.
In that case the sequence of vertices is called
a \textit{walk} from $x$ to $y$ of length $m$.
Unless otherwise stated, by a graph we mean a finite connected graph
throughout this paper.

Let $G=(V,E)$ be a graph (assumed to be finite and connected).
For $x,y \in V$ with $x\neq y$ let $d(x,y)=d_G(x,y)$ denote
the length of a shortest walk connecting $x$ and $y$.
By definition we set $d(x,x)=0$.
Then $d(x,y)$ becomes a metric on $V$,
which we call the \textit{graph distance}.
The \textit{distance matrix} of $G$ is defined by
\[
D=[d(x,y)]_{x,y\in V}\,,
\]
which is a matrix with index set $V\times V$.

Let $C(V)$ be the linear space 
of $\mathbb{R}$-valued functions $f$ on $V$.
We always identify $f\in C(V)$ with a column vector 
$[f_x]_{x\in V}$ through $f_x=f(x)$. 
The canonical inner product on $C(V)$ is defined
\[
\langle f,g \rangle
=\sum_{x\in V} f(x)g(x)\,,
\qquad f,g\in C(V).
\]
As usual, the distance matrix $D$  gives rise to 
a linear transformation on $C(V)$
by matrix multiplication.
Since $D$ is symmetric, we have
$\langle f,Dg \rangle=\langle Df,g \rangle$.

\subsection{Quadratic Embedding}

A \textit{quadratic embedding} of a graph $G=(V,E)$ 
in a Euclidean space $\mathbb{R}^N$ is
a map $\varphi:V\rightarrow \mathbb{R}^N$ satisfying
\[
\|\varphi(x)-\varphi(y)\|^2
=d(x,y),
\qquad
x,y\in V,
\]
where the left-hand side is the square of the Euclidean distance
between two points $\varphi(x)$ and $\varphi(y)$.
A graph $G$ is called \textit{of QE class} or \textit{of non-QE class}
according as it admits a quadratic embedding or not.

In general, a graph $H=(V^\prime, E^\prime)$ is called a
\textit{subgraph} of $G=(V,E)$
if $V^\prime\subset V$ and $E^\prime\subset E$.
If both $G$ and $H$ are connected,
they have their own graph distances.
If they coincide in such a way that
\[
d_H(x,y)=d_G(x,y),
\qquad x,y\in V^\prime,
\]
we say that $H$ is \textit{isometrically embedded} in $G$.
The next assertion is obvious by definition but useful.

\begin{lemma}\label{02lem:heredity}
Let $G=(V,E)$ and $H=(V^\prime,E^\prime)$ be graphs
and assume that $H$ is isometrically embedded in $G$.
\begin{enumerate}
\setlength{\itemsep}{0pt}
\item[\upshape (1)] If $G$ is of QE class, so is $H$.
\item[\upshape (2)] If $H$ is of non-QE class, so is $G$.
\end{enumerate}
\end{lemma}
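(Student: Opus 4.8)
The statement to prove is Lemma~\ref{02lem:heredity}, the heredity of the QE property under isometric embedding. Here is a proof plan.

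\medskip

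The plan is to work directly from Schoenberg's characterization recalled in the introduction: a connected graph is of QE class if and only if its distance matrix is conditionally negative definite, that is, $\langle f, Df\rangle \le 0$ for every $f$ with $\langle \mathbf 1, f\rangle = 0$. Since (1) and (2) are contrapositives of one another, it suffices to prove (1). So assume $G=(V,E)$ is of QE class and $H=(V',E')$ is isometrically embedded in $G$, with $V'\subseteq V$. Because the embedding is isometric, $d_H(x,y)=d_G(x,y)$ for all $x,y\in V'$, which means the distance matrix $D_H$ of $H$ is exactly the principal submatrix of $D_G$ indexed by $V'\times V'$.

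\medskip

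First I would set up the extension map. Given $f\in C(V')$, define $\tilde f\in C(V)$ by $\tilde f(x)=f(x)$ for $x\in V'$ and $\tilde f(x)=0$ for $x\in V\setminus V'$. Then one checks the two identities $\langle \tilde f, D_G \tilde f\rangle = \langle f, D_H f\rangle$ (because the sum defining the left side only picks up indices in $V'$, where $D_G$ agrees with $D_H$) and $\langle \mathbf 1_V, \tilde f\rangle = \langle \mathbf 1_{V'}, f\rangle$ (the zero entries contribute nothing). Now take any $f\in C(V')$ with $\langle \mathbf 1_{V'}, f\rangle = 0$; then $\tilde f$ satisfies $\langle \mathbf 1_V, \tilde f\rangle = 0$, so conditional negative definiteness of $D_G$ gives $\langle f, D_H f\rangle = \langle \tilde f, D_G \tilde f\rangle \le 0$. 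Hence $D_H$ is conditionally negative definite, and by Schoenberg's theorem $H$ is of QE class. An equivalent phrasing, which one may prefer to state as the one-line argument, is simply that an actual quadratic embedding $\varphi:V\to\mathbb R^N$ of $G$ restricts to a quadratic embedding $\varphi|_{V'}:V'\to\mathbb R^N$ of $H$, since $\|\varphi(x)-\varphi(y)\|^2 = d_G(x,y) = d_H(x,y)$ for $x,y\in V'$; the conditionally-negative-definite formulation is only needed if one wants to stay on the matrix side.

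\medskip

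There is essentially no obstacle here, which is why the excerpt calls the assertion obvious; the only point that genuinely uses the hypothesis is the word \emph{isometrically}. If $H$ were merely a subgraph with $d_H(x,y)\ge d_G(x,y)$ possible, then $D_H$ would not be a submatrix of $D_G$ and the restriction of $\varphi$ would fail to be a quadratic embedding of $H$, so I would make sure the write-up flags exactly where $d_H=d_G$ enters. The mildly delicate bookkeeping — ensuring the principal-submatrix identification is stated cleanly and that the passage between ``$H$ admits a quadratic embedding'' and ``$D_H$ is conditionally negative definite'' cites Schoenberg's theorem correctly — is the whole of the work, and it is routine.
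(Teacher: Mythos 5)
Your proposal is correct and matches the paper's intent: the paper treats this lemma as ``obvious by definition'' (a quadratic embedding of $G$ restricts to one of $H$ since $d_H=d_G$ on $V'$) and later notes it also follows because $D_H$ is a principal submatrix of $D_G$, which is exactly your zero-extension/restriction argument. Both of your formulations (restricting $\varphi$, or extending $f$ by zero and invoking Schoenberg) are the same routine argument the paper has in mind.
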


The following definition is then
adequate for classifying graphs of non-QE class.

\begin{definition}
\normalfont
A graph of non-QE class is called \textit{primary}
if it contains no isometrically embedded proper subgraphs
of non-QE class.
\end{definition}

Recall that the \textit{diameter} of a graph $G=(V,E)$ is defined by
\[
\mathrm{diam}(G)=\max\{d(x,y)\,;\, x,y\in V\}.
\]
The next result is useful,
of which the proof is straightforward, see also \cite{Obata2022}.

\begin{lemma}\label{02lem:isometric embedding}
Let $G=(V,E)$ be a graph and $H=(V^\prime,E^\prime)$ a subgraph.
\begin{enumerate}
\setlength{\itemsep}{0pt}
\item[\upshape (1)] If $H$ is isometrically embedded in $G$,
then $H$ is an induced subgraph of $G$.
\item[\upshape (2)] If $H$ is an induced subgraph of $G$ and
$\mathrm{diam\,}(H)\le2$, then $H$ is isometrically embedded in $G$.
\end{enumerate}
\end{lemma}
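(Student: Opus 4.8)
The plan is to prove Lemma 2.5 by unwinding the definitions. For part (1), suppose $H=(V',E')$ is isometrically embedded in $G$; I must show $H$ is induced, i.e.\ that whenever $x,y\in V'$ with $\{x,y\}\in E$, we already have $\{x,y\}\in E'$. If $x\sim_G y$ then $d_G(x,y)=1$, and since $H$ is isometric, $d_H(x,y)=1$ as well; but in a graph, $d_H(x,y)=1$ forces $\{x,y\}\in E'$. Hence every edge of $G$ between vertices of $V'$ lies in $E'$, which together with $E'\subset E$ says exactly that $H$ is the induced subgraph $G[V']$.

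For part (2), suppose $H=G[V']$ is induced with $\mathrm{diam}(H)\le 2$. I need $d_H(x,y)=d_G(x,y)$ for all $x,y\in V'$. The inequality $d_G(x,y)\le d_H(x,y)$ is automatic since any walk in $H$ is a walk in $G$ (using $E'\subset E$). For the reverse inequality I split on the value of $d_H(x,y)\in\{0,1,2\}$. If $d_H(x,y)=0$ then $x=y$ and $d_G(x,y)=0$. If $d_H(x,y)=1$ then $\{x,y\}\in E'\subset E$, so $d_G(x,y)=1$. If $d_H(x,y)=2$, then $x\ne y$ and $\{x,y\}\notin E'$; since $H$ is induced, $\{x,y\}\notin E$ either, so $d_G(x,y)\ge 2$, giving $d_G(x,y)=d_H(x,y)$. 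In every case $d_G(x,y)=d_H(x,y)$, so $H$ is isometrically embedded in $G$.

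The only genuinely delicate point is the case $d_H(x,y)=2$ in part (2): this is where the induced hypothesis is indispensable, since it is precisely what lets us transfer the non-adjacency of $x$ and $y$ from $H$ up to $G$ and thereby conclude $d_G(x,y)\ge 2$ rather than merely $d_G(x,y)\ge 1$. Everything else is a direct consequence of $E'\subseteq E$ and the elementary fact that graph distance~$1$ is equivalent to adjacency. I would present the argument compactly, emphasizing that part~(2) fails without the diameter bound (an induced path $P_4$ inside $C_4$ is the standard cautionary example) to motivate why the hypothesis appears.
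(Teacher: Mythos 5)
Your proof is correct and is precisely the straightforward definitional argument that the paper leaves to the reader (it only remarks that the proof is straightforward, citing \cite{Obata2022}): part (1) via ``distance $1$ iff adjacent'' plus isometry, and part (2) via $d_G\le d_H$ together with the case split $d_H\in\{0,1,2\}$, where the induced hypothesis rules out $d_G(x,y)=1$ when $d_H(x,y)=2$. One small correction to your closing remark: $P_4$ is \emph{not} an induced subgraph of $C_4$ (the fourth edge of the cycle joins two of its vertices), so that example does not illustrate the necessity of the diameter bound; a correct cautionary example is the induced $P_4$ inside $C_5$ (or $P_5$ inside $C_6$), where the endpoints satisfy $d_H=3>2=d_G$.
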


%%%%%%%%%%%%%%%%%%%%%%%%%%%%%%%%%%%%%%%%%%
\subsection{Quadratic Embedding Constants}

Let $G=(V,E)$ be a graph with $|V|\ge2$.
The \textit{quadratic embedding constant}
(\textit{QE constant} for short) of $G$ is defined by
\[
\mathrm{QEC}(G)
=\max\{\langle f,Df \rangle\,;\, f\in C(V), \,
\langle f,f\rangle=1, \, \langle \1,f \rangle=0\},
\]
where $\1\in C(V)$ is defined by $\1(x)=1$ for all $x\in V$.

It follows from Schoenberg \cite{Schoenberg1935,Schoenberg1938} 
that a graph $G$ admits a quadratic embedding
if and only if the distance matrix $D=[d(x,y)]$ is
conditionally negative definite, i.e.,
$\langle f,Df \rangle\le0$ 
for all $f\in C(V)$ with $\langle \1,f \rangle=0$.
Hence a graph $G$ is of QE class (resp. of non-QE class)
if and only if $\mathrm{QEC}(G)\le0$ (resp. $\mathrm{QEC}(G)>0$).

In order to get the value of $\mathrm{QEC}(G)$ we have
established a standard method 
on the basis of Lagrange's multipliers.

\begin{proposition}[\cite{Obata-Zakiyyah2018}]
\label{02prop:QEC}
Let $D$ be the distance matrix of a graph $G$ 
on $n$ vertices with $n\ge3$.
Let $\mathcal{S}$ be the set of all stationary points  
$(f,\lambda,\mu)$ of
\begin{equation}\label{02eqn:basic Lagrange}
\varphi(f,\lambda,\mu)
=\langle f,Df\rangle
 -\lambda(\langle f,f\rangle-1)
 -\mu\langle \1,f\rangle,
\end{equation}
where $f\in C(V)\cong \mathbb{R}^n$, $\lambda\in\mathbb{R}$
and $\mu\in\mathbb{R}$.
Then we have
\[
\mathrm{QEC}(G)
=\max\{\lambda\,;\, (f,\lambda,\mu)\in \mathcal{S}\}.
\]
\end{proposition}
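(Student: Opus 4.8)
The plan is to prove Proposition~\ref{02prop:QEC} by a direct application of the Lagrange multiplier theorem to the constrained optimization problem defining $\mathrm{QEC}(G)$, together with a careful check that the supremum is attained at a stationary point and that every stationary point yields a value of $\lambda$ bounded by that supremum.

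First I would set up the problem as maximizing the continuous function $f\mapsto\langle f,Df\rangle$ over the constraint set
\[
M=\{f\in C(V)\,;\,\langle f,f\rangle=1,\ \langle\1,f\rangle=0\},
\]
which is the intersection of the unit sphere with the hyperplane orthogonal to $\1$; this set is nonempty (since $n\ge3\ge2$) and compact, so the maximum is attained, say at $f_0\in M$, and by definition $\langle f_0,Df_0\rangle=\mathrm{QEC}(G)$. Next I would verify the constraint qualification: the gradients of $g_1(f)=\langle f,f\rangle-1$ and $g_2(f)=\langle\1,f\rangle$ are $2f$ and $\1$, and on $M$ these are linearly independent because $f_0\ne0$ while $\langle\1,f_0\rangle=0$ forces $f_0$ not to be a multiple of $\1$. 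Hence the Lagrange multiplier theorem applies: there exist $\lambda_0,\mu_0\in\mathbb{R}$ with $\nabla_f\varphi(f_0,\lambda_0,\mu_0)=0$, i.e. $2Df_0-2\lambda_0 f_0-\mu_0\1=0$, and together with the two constraint equations this says precisely that $(f_0,\lambda_0,\mu_0)\in\mathcal{S}$. Pairing the gradient equation with $f_0$ and using $\langle f_0,f_0\rangle=1$, $\langle\1,f_0\rangle=0$ gives $\lambda_0=\langle f_0,Df_0\rangle=\mathrm{QEC}(G)$, so $\mathrm{QEC}(G)\le\max\{\lambda\,;\,(f,\lambda,\mu)\in\mathcal{S}\}$.

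For the reverse inequality I would take an arbitrary stationary point $(f,\lambda,\mu)\in\mathcal{S}$. The stationarity conditions $\partial_\lambda\varphi=0$ and $\partial_\mu\varphi=0$ give exactly $\langle f,f\rangle=1$ and $\langle\1,f\rangle=0$, so $f\in M$; and $\nabla_f\varphi=0$ reads $2Df=2\lambda f+\mu\1$, whence pairing with $f$ yields $\langle f,Df\rangle=\lambda$. Since $f\in M$, the definition of $\mathrm{QEC}(G)$ gives $\lambda=\langle f,Df\rangle\le\mathrm{QEC}(G)$, so $\max\{\lambda\,;\,(f,\lambda,\mu)\in\mathcal{S}\}\le\mathrm{QEC}(G)$. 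Combining the two inequalities proves the claim. I would also note in passing that $\mathcal S$ is nonempty, which is guaranteed by the first part, so the maximum on the right-hand side is well defined.

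The only genuine subtlety — the main obstacle — is making sure the hypotheses of the Lagrange multiplier theorem are met, specifically the constraint qualification (linear independence of the constraint gradients at the maximizer) and the fact that we are maximizing over a compact set so that a maximizer exists in the first place; once these are in place the rest is bookkeeping with the gradient identities. The condition $n\ge3$ (rather than $n\ge2$) is what makes $M$ genuinely positive-dimensional and the problem nondegenerate, so I would flag where that is used. Everything else is a routine unwinding of the definitions of stationary point and of $\mathrm{QEC}(G)$.
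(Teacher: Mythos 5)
Your proof is correct and is essentially the standard Lagrange-multiplier argument that the cited source \cite{Obata-Zakiyyah2018} uses for this result (the present paper only quotes the proposition without reproving it): compactness of the constraint set gives a maximizer, linear independence of the gradients $2f_0$ and $\1$ (since $f_0\ne 0$ and $f_0\perp\1$) gives the multiplier rule, and pairing $2Df=2\lambda f+\mu\1$ with $f$ identifies $\lambda=\langle f,Df\rangle$ in both directions. The only small inaccuracy is your closing remark about $n\ge3$: the argument as you give it works verbatim for $n\ge2$ (LICQ holds there too), so the hypothesis $n\ge3$ is not actually needed anywhere in your proof.
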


For the range of $\mathrm{QEC}(G)$ we have the following results.

\begin{proposition}[\cite{Baskoro-Obata2021}]
\label{02prop:QEC ge -1}
For any graph $G$ we have $\mathrm{QEC}(G)\ge -1$ and
the equality holds if and only if $G$ is a complete graph.
\end{proposition}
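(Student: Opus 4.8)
The plan is to prove that $\mathrm{QEC}(G) \ge -1$ for every (finite connected) graph $G$ on $n \ge 2$ vertices, with equality precisely when $G = K_n$. The natural strategy is to exhibit an explicit test function $f$ realizing the value $-1$ for the complete graph, and then to show that no graph can do worse, with a rigidity argument pinning down the equality case.

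First I would handle the complete graph directly. For $G = K_n$ the distance matrix is $D = J - I$, where $J$ is the all-ones matrix. If $f \in C(V)$ satisfies $\langle \1, f\rangle = 0$, then $Jf = 0$, so $\langle f, Df\rangle = \langle f, (J-I)f\rangle = -\langle f,f\rangle = -1$ whenever $\langle f,f\rangle = 1$. Hence every admissible $f$ is a stationary point with $\lambda = -1$, and $\mathrm{QEC}(K_n) = -1$ by Proposition~\ref{02prop:QEC} (or straight from the definition).

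Next I would establish the lower bound $\mathrm{QEC}(G) \ge -1$ in general. Since $d(x,y) \ge 1$ whenever $x \ne y$, we can write $D = (J - I) + D'$ where $D' = [d(x,y) - 1 + \delta_{xy}]_{x,y}$ has nonnegative entries (the diagonal of $D'$ is zero, and off-diagonal entries are $d(x,y) - 1 \ge 0$). For an admissible $f$ with $\langle\1,f\rangle = 0$ and $\langle f,f\rangle = 1$ we again get $\langle f, (J-I)f\rangle = -1$, so $\langle f, Df\rangle = -1 + \langle f, D'f\rangle$. The point is then to choose $f$ making $\langle f, D'f\rangle \ge 0$. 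One clean way: pick any edge $\{x_0,y_0\} \in E$ (which exists since $G$ is connected and $n \ge 2$) and take $f$ supported on $\{x_0,y_0\}$ with $f(x_0) = 1/\sqrt2$, $f(y_0) = -1/\sqrt2$. Then $\langle\1,f\rangle = 0$, $\langle f,f\rangle = 1$, and $\langle f, D'f\rangle = 2 f(x_0)f(y_0)\,(d(x_0,y_0)-1) = -(d(x_0,y_0)-1) = 0$ since $d(x_0,y_0) = 1$. This gives $\langle f, Df\rangle = -1$, hence $\mathrm{QEC}(G) \ge -1$.

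Finally, the equality case. Suppose $\mathrm{QEC}(G) = -1$; I must show $G = K_n$. If $G$ were not complete, there exist $x_1, y_1$ with $d(x_1,y_1) \ge 2$. Using the same idea, take $f$ supported on $\{x_1, y_1\}$ with values $1/\sqrt2$ and $-1/\sqrt2$: then $\langle f, Df\rangle = -1 + (d(x_1,y_1) - 1) \ge -1 + 1 = 0 > -1$, contradicting $\mathrm{QEC}(G) = -1$. Conversely $\mathrm{QEC}(K_n) = -1$ was already shown. The main obstacle here is essentially presentational rather than mathematical: one should double-check the decomposition $D = (J-I) + D'$ and the sign bookkeeping in $\langle f, D'f\rangle$ for a two-point support, and make sure the edge/non-edge test functions are admissible (they are, since they have two opposite entries summing to zero and squared norm one). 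Everything reduces to the observation that $\langle f,(J-I)f\rangle = -1$ on the constraint surface, so the whole behavior of $\mathrm{QEC}$ near $-1$ is governed by the nonnegative perturbation $D'$, which vanishes in the relevant quadratic form exactly when all graph distances used are $1$ — i.e. on complete graphs.
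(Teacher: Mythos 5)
Your first two steps are fine: $\mathrm{QEC}(K_n)=-1$ via $D=J-I$ and $Jf=0$ on the constraint set, and the lower bound $\mathrm{QEC}(G)\ge-1$ by exhibiting the admissible vector supported on an edge $\{x_0,y_0\}$ with values $\pm 1/\sqrt2$, which gives $\langle f,Df\rangle=2f(x_0)f(y_0)d(x_0,y_0)=-1$. (The paper itself only cites this proposition from Baskoro--Obata, so there is no in-paper proof to compare with; your argument up to here is a correct self-contained one.)

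The equality case, however, contains a sign error that breaks the argument. For $f$ supported on $\{x_1,y_1\}$ with values $\pm1/\sqrt2$ you have $f(x_1)f(y_1)=-\tfrac12$, so by your own formula from the previous step $\langle f,D'f\rangle=2f(x_1)f(y_1)\bigl(d(x_1,y_1)-1\bigr)=-\bigl(d(x_1,y_1)-1\bigr)$, whence $\langle f,Df\rangle=-d(x_1,y_1)\le-2$, not $-1+(d(x_1,y_1)-1)\ge0$. Since $\mathrm{QEC}$ is a maximum, producing an admissible $f$ with value below $-1$ yields no contradiction, so the implication $\mathrm{QEC}(G)=-1\Rightarrow G$ complete is not established. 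Moreover the defect cannot be repaired within your framework: every admissible vector supported on two vertices is forced to be $(a,-a)$ with $2a^2=1$ and gives exactly $-d(x_1,y_1)\le-1$, so two-point test functions can never certify $\mathrm{QEC}(G)>-1$. You need support of size at least three. A clean fix: if $G$ is connected and not complete, pick $u,w$ with $d(u,w)=2$ and a common neighbor $v$ on a geodesic, and take $f(u)=f(w)=1/\sqrt6$, $f(v)=-2/\sqrt6$, $f\equiv0$ elsewhere; then $\langle\1,f\rangle=0$, $\langle f,f\rangle=1$ and
\[
\langle f,Df\rangle
=2\bigl(f(u)f(v)\,d(u,v)+f(v)f(w)\,d(v,w)+f(u)f(w)\,d(u,w)\bigr)
=-\tfrac23>-1,
\]
so $\mathrm{QEC}(G)\ge-\tfrac23$. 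Equivalently, note that $\{u,v,w\}$ induces a $P_3$ isometrically embedded in $G$ and invoke Lemma \ref{02lem:isometrically embedded subgraphs} together with $\mathrm{QEC}(P_3)=-2/3$ from Proposition \ref{02ex:path graphs}.
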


Thus, in order to determine $\mathrm{QEC}(G)$ of 
a graph $G$ which is not a complete graph,
it is sufficient to seek out the stationary points of
$\varphi(f,\lambda,\mu)$ with $\lambda>-1$ and 
then to specify the maximum of $\lambda$ appearing therein.

A table of the QE constants of graphs on $n\le5$ vertices 
is available \cite{Obata-Zakiyyah2018}.
By direct observation we have the following

\begin{theorem}[\cite{Obata-Zakiyyah2018}]
\label{02thm:G5-10 and 17}
All graphs on $n\le4$ vertices are of QE class.
There are just two non-QE graphs on five vertices, 
that are G5-10 and G5-17, 
see Figure \ref{fig:Non-QE graphs on 5 vertices}.
Their QE constants are given by
\[
\mathrm{QEC}(\text{\upshape G5-10})=\frac25,
\qquad
\mathrm{QEC}(\text{\upshape G5-17})=\frac{4}{11+\sqrt{161}}
\approx 0.1689.
\]
Moreover, both are primary non-QE graphs.
\end{theorem}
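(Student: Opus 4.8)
The plan is to establish Theorem \ref{02thm:G5-10 and 17} by a combination of exhaustive bookkeeping for the small cases and explicit computation of the two critical QE constants. First I would recall that there are exactly $1,2,3,6,21$ connected graphs on $1,2,3,4,5$ vertices respectively (from McKay's list), so the claim ``all graphs on $n\le4$ vertices are of QE class'' reduces to checking $1+2+3+6=12$ graphs, all of diameter at most $3$; for each, one computes the distance matrix $D$ and verifies, via Proposition \ref{02prop:QEC}, that every stationary value $\lambda$ of $\varphi(f,\lambda,\mu)$ satisfies $\lambda\le0$ — equivalently that $D$ is conditionally negative definite. For graphs of diameter $\le2$ this is immediate since $D=\mathbf{1}\mathbf{1}^{\!\top}-I-A$ where $A$ is the adjacency matrix, and conditional negative definiteness amounts to $\langle f,(I+A)f\rangle\ge0$ for $f\perp\mathbf{1}$, which holds whenever the least eigenvalue of $A$ is at least $-1$ or more generally can be read off directly from the tiny eigenvalue lists; the handful of diameter-$3$ graphs on $4$ vertices (essentially $P_4$) are checked by hand. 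This part is routine and amounts to consulting or re-deriving the table of QE constants of graphs on $n\le5$ vertices referenced from \cite{Obata-Zakiyyah2018}.

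Next I would turn to the $21$ connected graphs on five vertices. The efficient route is to exploit heredity (Lemma \ref{02lem:heredity}) in reverse: any five-vertex graph containing no non-QE four-vertex subgraph is not automatically QE, so instead I would go through the list of $21$ graphs, and for each one compute $\mathrm{QEC}$ using Proposition \ref{02prop:QEC} together with Proposition \ref{02prop:QEC ge -1} (which lets us discard the complete graph $K_5$ immediately and restrict the search to stationary points with $\lambda>-1$). For graphs of diameter $\le2$ — which is the majority — one again uses $D=\mathbf{1}\mathbf{1}^{\!\top}-I-A$ and reduces to the spectrum of $A$ restricted to $\mathbf{1}^{\perp}$: the graph is QE iff that restricted spectrum is bounded below by $-1$. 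Carrying this out identifies precisely which graphs fail. One finds that exactly two do: G5-10 and G5-17. For the remaining nineteen, one records $\mathrm{QEC}(G)\le0$, completing the ``only these two'' half of the statement.

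For the quantitative claims, I would compute $\mathrm{QEC}(\text{G5-10})$ and $\mathrm{QEC}(\text{G5-17})$ directly. For G5-10 one writes down its distance matrix, forms the Lagrange system from \eqref{02eqn:basic Lagrange}, and solves: the symmetry of the graph lets one diagonalize $D$ on $\mathbf{1}^{\perp}$ by hand, and the largest eigenvalue of the relevant block comes out to $2/5$. For G5-17, the same method yields a quadratic equation in $\lambda$ whose relevant root is $4/(11+\sqrt{161})$; rationalizing gives the decimal $\approx0.1689$, and in particular this is positive, confirming non-QE. Finally, ``both are primary'' follows because every proper connected subgraph of a five-vertex graph has at most four vertices, and we have already shown all such graphs are QE; hence neither G5-10 nor G5-17 can contain an isometrically embedded non-QE proper subgraph, so each is primary by definition.

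The main obstacle is the sheer volume of the five-vertex enumeration: twenty-one distance matrices must be handled, and while each individual computation is elementary, organizing them so the argument is transparent (grouping by diameter, using the $A$-spectrum shortcut for diameter $\le2$, and handling the few diameter-$3$ exceptions separately) is where care is needed. The explicit eigenvalue computations for G5-10 and G5-17 are the only genuinely non-mechanical step, and even there the graph automorphisms reduce the work to a $2\times2$ or $3\times3$ eigenvalue problem.
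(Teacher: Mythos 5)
Your overall strategy (exhaustive check of the small graphs via Proposition \ref{02prop:QEC}, explicit computation for the two exceptional graphs, and primariness from the fact that an isometrically embedded proper subgraph is induced and hence has at most four vertices) is the right one; indeed the paper itself does not reprove this theorem but quotes it from the table of QE constants in \cite{Obata-Zakiyyah2018}, which was compiled by exactly this kind of case-by-case computation. However, your main computational shortcut is wrong, and it is the step on which the whole enumeration rests. For a graph of diameter at most $2$ the distance matrix is
\[
D=2(J-I)-A=2\,\1\1^{\!\top}-2I-A,
\]
not $\1\1^{\!\top}-I-A$: off-diagonal entries are $1$ for adjacent and $2$ for non-adjacent pairs, which your formula gives as $0$ and $1$. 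Consequently, for $f\perp\1$ one has $\langle f,Df\rangle=-\langle f,(2I+A)f\rangle$, so the correct criterion is that the least eigenvalue of $A$ restricted to $\1^{\perp}$ be at least $-2$ (equivalently $\mathrm{QEC}(G)=-2-\lambda_{\min}(A|_{\1^{\perp}})$ for diameter-$2$ graphs), not at least $-1$. With your threshold $-1$ the method misclassifies graphs immediately: $C_4$ has the eigenvector $(1,-1,1,-1)\perp\1$ with eigenvalue $-2$, so you would declare $C_4$ (and likewise the wheel $K_1+C_4$ on five vertices, which has $\mathrm{QEC}=0$) to be non-QE, contradicting the very first assertion of the theorem; conversely the claim that ``one finds that exactly two fail'' would not come out of the computation as you describe it. Fixing the constant repairs the argument, and then G5-10 $=K_{3,2}$ gives $\mathrm{QEC}=-2+\tfrac{12}{5}=\tfrac25$ and G5-17 leads to the quadratic $5\lambda^2+11\lambda-2=0$ with largest root $\frac{\sqrt{161}-11}{10}=\frac{4}{11+\sqrt{161}}$, as stated.

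Two smaller points: the numbers of connected graphs on $1,2,3,4,5$ vertices are $1,1,2,6,21$ (not $1,2,3,6,21$), so the $n\le4$ check involves $10$ graphs; and when you argue primariness you should invoke Lemma \ref{02lem:isometric embedding}(1) explicitly — a proper subgraph could be spanning, but an isometrically embedded one must be induced, hence proper isometric subgraphs of a five-vertex graph have at most four vertices, and these are all of QE class by the first part.
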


\begin{figure}[hbt]
\begin{center}
\includegraphics[width=60pt]{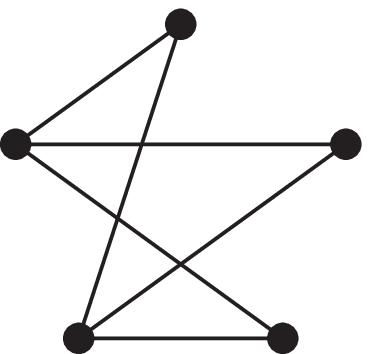}
\quad \raisebox{20pt}{$\cong$} \quad
\includegraphics[width=60pt]{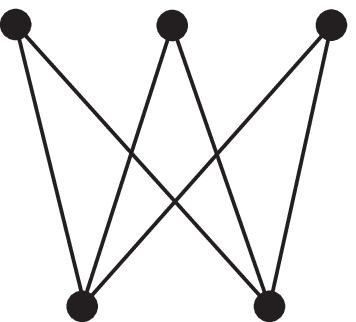}
\qquad
\includegraphics[width=60pt]{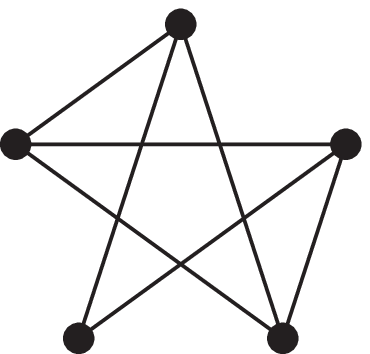}
\quad \raisebox{20pt}{$\cong$} \quad
\includegraphics[width=60pt]{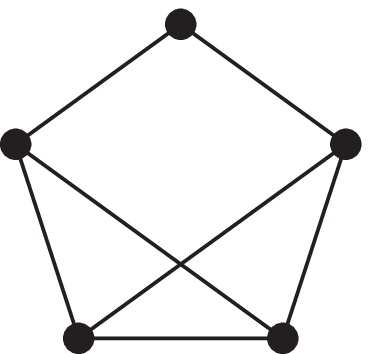}
\end{center}
\caption{G5-10 (left) and G5-17 (right)}
\label{fig:Non-QE graphs on 5 vertices}
\end{figure}

\begin{lemma}\label{02lem:isometrically embedded subgraphs}
Let $G=(V,E)$ and $H=(V^\prime,E^\prime)$ be two graphs
with $|V|\ge2$ and $|V^\prime|\ge2$.
If $H$ is isometrically embedded in $G$, we have 
$\mathrm{QEC}(H)\le \mathrm{QEC}(G)$.
\end{lemma}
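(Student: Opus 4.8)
The plan is to reduce the claimed inequality to a statement about the restriction of the relevant quadratic form, and then to invoke the variational characterization of $\mathrm{QEC}$. Write $V'\subset V$ and let $D_H$, $D_G$ denote the distance matrices of $H$ and $G$. The crucial observation is that since $H$ is isometrically embedded in $G$, we have $d_H(x,y)=d_G(x,y)$ for all $x,y\in V'$; hence $D_H$ is exactly the principal submatrix of $D_G$ indexed by $V'\times V'$. In other words, if we extend a function $f\in C(V')$ to $\tilde f\in C(V)$ by setting $\tilde f(x)=f(x)$ for $x\in V'$ and $\tilde f(x)=0$ for $x\in V\setminus V'$, then $\langle \tilde f, D_G\tilde f\rangle=\langle f, D_H f\rangle$, and likewise $\langle \tilde f,\tilde f\rangle=\langle f,f\rangle$ and $\langle \1_V,\tilde f\rangle=\langle\1_{V'},f\rangle$.

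First I would take a function $f\in C(V')$ attaining the maximum in the definition of $\mathrm{QEC}(H)$, that is, with $\langle f,f\rangle=1$, $\langle \1_{V'},f\rangle=0$, and $\langle f,D_H f\rangle=\mathrm{QEC}(H)$; such a maximizer exists because we are maximizing a continuous function over a compact set (the intersection of the unit sphere with a hyperplane in the finite-dimensional space $C(V')$). Next I would form the zero-extension $\tilde f\in C(V)$ as above. Then $\tilde f$ is a legitimate competitor in the variational problem defining $\mathrm{QEC}(G)$: it satisfies $\langle \tilde f,\tilde f\rangle=1$ and $\langle\1_V,\tilde f\rangle=0$. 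Consequently
\[
\mathrm{QEC}(G)\ge \langle \tilde f, D_G\tilde f\rangle=\langle f,D_H f\rangle=\mathrm{QEC}(H),
\]
which is the desired inequality.

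There is really no serious obstacle here; the only point requiring care is the identification $D_H=(D_G)|_{V'\times V'}$, which is precisely the content of the isometric-embedding hypothesis and should be stated explicitly. One might also remark that the hypotheses $|V|\ge2$ and $|V'|\ge2$ are needed only so that $\mathrm{QEC}$ is defined for both graphs (the constraint set $\{\langle f,f\rangle=1,\ \langle\1,f\rangle=0\}$ is nonempty precisely when the vertex set has at least two elements). It is worth noting that this lemma subsumes part (2) of Lemma \ref{02lem:heredity}: if $\mathrm{QEC}(H)>0$ then $\mathrm{QEC}(G)>0$, so $H$ non-QE forces $G$ non-QE. Finally, I would point out that the argument is monotonicity of a maximum under enlarging the feasible region via zero-extension, a technique that recurs throughout the paper when comparing a graph with its isometric subgraphs.
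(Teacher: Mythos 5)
Your proof is correct and is exactly the argument the paper intends: the isometric-embedding hypothesis makes $D_H$ a principal submatrix of $D_G$, and zero-extending an admissible maximizer for $\mathrm{QEC}(H)$ yields an admissible competitor for $\mathrm{QEC}(G)$ with the same value of the quadratic form, giving $\mathrm{QEC}(H)\le \mathrm{QEC}(G)$. The paper merely states this as obvious with a reference, so your write-up is the same approach spelled out in detail.
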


The proof is obvious as the distance matrix of $H$
is a principal submatrix of the one of $G$,
see \cite{Obata-Zakiyyah2018}.
Note that Lemma \ref{02lem:heredity} follows 
by comparison of the QE constants.

%%%%%%%%%%%%%%%%%%%%%%%%%%%%%%%%%%%%%%%%%%%%%%%%%%%%%%%%%%%
\section{Connected Graphs on Six Vertices}

We will classify the graphs on six vertices
into the classes of QE graphs,
of non-QE graphs, and of primary non-QE graphs
along the following steps:
\begin{description}
\setlength{\itemsep}{0pt}
\item[Step 1] Sieve out all graphs which are the star products 
or Cartesian products of two graphs of QE class. 
Those graphs are of QE class.
\item[Step 2] Sieve out all non-primary non-QE graphs.
\item[Step 3] Use some special series of graphs of which
the QE constants are known.
\item[Step 4] Sieve out all graphs which are graph joins
of two regular graphs.
\item[Step 5] Construct explicitly quadratic embeddings 
from known ones of smaller graphs.
\item[Step 6] Remaining graphs are judged by
explicit calculation of the QE constant.
\end{description}

\subsection{Star and Cartesian Products}

Let $G_1=(V_1,E_1)$ and $G_2=(V_2,E_2)$ be two graphs.
The \textit{Cartesian product} of $G_1$ and $G_2$ is a graph on
the vertex set $V=V_1\times V_2$ with the adjacency relation
\[
(x_1,y_1)\sim (x_2,y_2)
\quad\Longleftrightarrow\quad
x_1=x_2, \,\, y_1\sim y_2 \quad\text{or}\quad
x_1\sim x_2,\,\, y_1=y_2.
\]
The Cartesian product of $G_1$ and $G_2$ 
is denoted by $G_1\times G_2$.
Assume that $V_1$ and $V_2$ are disjoint,
and choose $o_1\in V_1$ and $o_2\in V_2$ as
distinguished vertices (or roots) of $G_1$ and $G_2$,
respectively.
Then the \textit{star product} of $G_1$ and $G_2$
with respect to $o_1$ and $o_2$ is a graph
obtained by putting two graphs together at 
the distinguished vertices $o_1$ and $o_2$,
which are then identified.
The star product depends on the choice of distinguished vertices
and is denoted by $G_1\star_{(o_1,o_2)} G_2$.
For simplicity we write $G_1\star G_2$ 
whenever there is no danger of confusion.

\begin{proposition}[\cite{Obata-Zakiyyah2018}]
\label{03prop:Cartesian products of QE}
If two graphs $G_1$ and $G_2$ are of QE class,
so is their Cartesian product $G_1\times G_2$.
\end{proposition}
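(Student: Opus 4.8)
The plan is to use Schoenberg's criterion as stated in Section~2: the graph $G_1\times G_2$ is of QE class exactly when its distance matrix $D$ is conditionally negative definite, i.e. $\langle f,Df\rangle\le0$ for every $f\in C(V_1\times V_2)$ with $\langle\1,f\rangle=0$. The first step is to write $D$ in a form adapted to the product structure. Since the graph distance on a Cartesian product is additive, $d((x_1,y_1),(x_2,y_2))=d_{G_1}(x_1,x_2)+d_{G_2}(y_1,y_2)$, one has, with $J_k$ denoting the $k\times k$ all-ones matrix and $n_i=|V_i|$,
\[
D = D_1\otimes J_{n_2} + J_{n_1}\otimes D_2 ,
\]
where $D_i$ is the distance matrix of $G_i$. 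It is worth emphasizing that this is not the Kronecker sum $D_1\otimes I+I\otimes D_2$; the all-ones blocks are precisely what makes the identity correct, and using the naive Kronecker sum would make the argument fail.

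The second step is to read off the quadratic form block by block. I would identify $f\in C(V_1\times V_2)$ with the $n_1\times n_2$ matrix $F=[f(x,y)]$, and a short computation with $F$ then gives
\[
\langle f,(D_1\otimes J_{n_2})f\rangle=\langle g_1,D_1 g_1\rangle ,
\qquad
\langle f,(J_{n_1}\otimes D_2)f\rangle=\langle g_2,D_2 g_2\rangle ,
\]
where $g_1\in C(V_1)$ is the vector of row sums, $g_1(x)=\sum_{y}f(x,y)$, and $g_2\in C(V_2)$ is the vector of column sums, $g_2(y)=\sum_{x}f(x,y)$. The point to record here is that $\langle\1_{V_1},g_1\rangle=\langle\1_{V_2},g_2\rangle=\langle\1,f\rangle$.

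The third step is immediate: assuming $\langle\1,f\rangle=0$, both $g_1$ and $g_2$ have vanishing mean, so conditional negative definiteness of $D_1$ and of $D_2$ — which holds because $G_1$ and $G_2$ are of QE class — yields $\langle g_1,D_1 g_1\rangle\le0$ and $\langle g_2,D_2 g_2\rangle\le0$; adding the two displays above gives $\langle f,Df\rangle\le0$, which is what we need. The degenerate case $n_1=1$ or $n_2=1$ is trivial, as then $G_1\times G_2$ is isomorphic to the other factor.

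I do not expect a serious obstacle: the only genuine care point is the exact form of the distance matrix, and the elementary but slightly fiddly verification of the two block identities in the matrix picture of $f$. (An alternative route, which I would mention only as a remark, passes through the $Q$-matrix: since $q^{d((x_1,y_1),(x_2,y_2))}=q^{d_{G_1}(x_1,x_2)}q^{d_{G_2}(y_1,y_2)}$, one has $Q_q(G_1\times G_2)=Q_q(G_1)\otimes Q_q(G_2)$, and a tensor product of positive semidefinite matrices is positive semidefinite, so positivity of the $Q$-matrices of $G_1$ and $G_2$ on $[0,1]$ transfers to the product; this requires the positive-definite-kernel form of Schoenberg's theorem rather than the conditional-negative-definiteness form used above.)
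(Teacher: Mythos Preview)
Your argument is correct. The additivity of the graph distance on the Cartesian product gives $D=D_1\otimes J_{n_2}+J_{n_1}\otimes D_2$, and your reduction of the quadratic form to $\langle g_1,D_1g_1\rangle+\langle g_2,D_2g_2\rangle$ via the row- and column-sum vectors is exactly right; since $\langle\1,f\rangle=0$ forces both marginals to have zero sum, conditional negative definiteness of $D_1$ and $D_2$ finishes the job. The alternative $Q$-matrix route you sketch is also valid.

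As for comparison: the present paper does not supply its own proof of this proposition but quotes it from \cite{Obata-Zakiyyah2018}, so there is no in-paper argument to set against yours. Your direct conditional-negative-definiteness computation is the natural one and is essentially the argument one finds in the cited source; the paper here also records (in the subsequent remark) the sharper fact $\mathrm{QEC}(G_1\times G_2)=0$ for any non-trivial product of QE graphs, which your proof does not address but was not asked for.
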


\begin{proposition}[\cite{Obata-Zakiyyah2018}]
\label{03prop:star products of QE}
If two graphs $G_1$ and $G_2$ are of QE class,
so is their star product $G_1\star G_2$.
\end{proposition}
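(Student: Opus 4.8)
The final statement is Proposition~\ref{03prop:star products of QE}: if $G_1$ and $G_2$ are of QE class, so is their star product $G_1 \star G_2$.

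My plan is to work directly with the characterization of QE class via conditional negative definiteness of the distance matrix (Schoenberg's theorem, as quoted). Let $V = V_1 \cup V_2$ with $V_1 \cap V_2 = \{o\}$, where $o$ is the identified root. The crucial structural fact about the star product is that its graph distance decomposes additively through the cut vertex: for $x \in V_1$ and $y \in V_2$ we have $d(x,y) = d_{G_1}(x,o) + d_{G_2}(o,y)$, while for $x,y$ in the same $V_i$ the distance is just $d_{G_i}(x,y)$. So first I would establish this distance formula carefully (it follows because every walk between the two ``lobes'' must pass through $o$).

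Next I would use this to write the distance matrix $D$ of $G_1 \star G_2$ in block form relative to the decomposition $V = (V_1 \setminus \{o\}) \sqcup \{o\} \sqcup (V_2 \setminus \{o\})$, and exhibit it as a sum: $D = \tilde D_1 + \tilde D_2$, where $\tilde D_i$ is built from the distance matrix of $G_i$ padded by the ``radial'' terms $d_{G_i}(\cdot,o)$ on the cross block and zero on the block belonging entirely to the other lobe. Concretely, each $\tilde D_i$ acts on $C(V)$ and is the distance matrix of $G_i$ extended by the rank-type correction coming from $d_{G_i}(x,o)$; one checks that $\tilde D_i = E_i^{\mathsf T} D_{G_i} E_i$ plus terms of the form $u \1^{\mathsf T} + \1 u^{\mathsf T}$ — more precisely, $\tilde D_i$ restricted to $C(V_i)$ is $D_{G_i}$, and for $f$ supported off $V_i$ the value $\langle f, \tilde D_i f\rangle$ reduces to a quadratic form in $d_{G_i}(\cdot,o)$ that is handled by the same splitting trick. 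The key point is that conditional negative definiteness is preserved under addition, so it suffices to show each $\tilde D_i$ is conditionally negative definite on $C(V)$.

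To finish, I would take $f \in C(V)$ with $\langle \1, f\rangle = 0$ and show $\langle f, \tilde D_1 f\rangle \le 0$ (and symmetrically for $\tilde D_2$). Split $f = f_1 + f_2 + f_0$ according to support on $V_1 \setminus\{o\}$, $V_2\setminus\{o\}$, and $\{o\}$. Since $\tilde D_1$ ignores the internal structure of the second lobe, the contribution of $f_2$ enters only through its total mass $c = \langle \1, f_2\rangle$ multiplied against the radial vector $r_1(x) = d_{G_1}(x,o)$ on $V_1$. So $\langle f, \tilde D_1 f\rangle$ becomes $\langle g, D_{G_1} g\rangle$-type expression for an appropriate $g \in C(V_1)$ built from $f_1$, $f_0$, and a correction $-c$ placed at $o$ — chosen so that $\langle \1_{V_1}, g\rangle = 0$, which is exactly where $\langle \1, f\rangle = 0$ gets used. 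Then conditional negative definiteness of $D_{G_1}$ (i.e., $G_1$ being of QE class) gives the bound. I expect the main obstacle to be the bookkeeping in this last step: one must verify that the radial/correction terms assemble precisely into a mean-zero test vector for $D_{G_1}$, and that the analogous argument for $\tilde D_2$ uses $\langle\1,f\rangle=0$ consistently; the identity $d(x,y)=d_{G_1}(x,o)+d_{G_2}(o,y)$ on the cross block is what makes the off-diagonal blocks of $\tilde D_1$ and $\tilde D_2$ add up to the true $D$ without overlap, and checking this cleanly (rather than drowning in indices) is the real content. Alternatively — and perhaps more cleanly — one can argue at the level of embeddings: given quadratic embeddings $\varphi_i : V_i \to \mathbb{R}^{N_i}$, translate so that $\varphi_i(o) = 0$, and set $\varphi : V \to \mathbb{R}^{N_1} \oplus \mathbb{R}^{N_2}$ by $\varphi(x) = (\varphi_1(x),0)$ for $x \in V_1$ and $\varphi(x) = (0,\varphi_2(x))$ for $x \in V_2$; then $\|\varphi(x)-\varphi(y)\|^2 = \|\varphi_1(x)-\varphi_1(o)\|^2 + \|\varphi_2(o)-\varphi_2(y)\|^2 = d_{G_1}(x,o) + d_{G_2}(o,y) = d(x,y)$ for $x\in V_1,\ y\in V_2$, and the within-lobe distances are immediate. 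This second route avoids the conditional-definiteness algebra entirely, reducing the whole proposition to the single distance identity through the cut vertex, so I would actually present that as the main argument and relegate the matrix computation to a remark.
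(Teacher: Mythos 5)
The paper itself does not prove this proposition: it is quoted from \cite{Obata-Zakiyyah2018}, so there is no in-paper argument to compare against step by step. Your main argument (the explicit embedding) is correct and is essentially the standard proof of this fact: after translating so that $\varphi_1(o)=\varphi_2(o)=0$, placing the two lobes in orthogonal coordinate blocks gives, for $x\in V_1$ and $y\in V_2$,
\[
\|\varphi(x)-\varphi(y)\|^2=\|\varphi_1(x)\|^2+\|\varphi_2(y)\|^2
=d_{G_1}(x,o)+d_{G_2}(o,y)=d_{G_1\star G_2}(x,y),
\]
and the whole proposition indeed reduces to the two distance facts you flag: the cut-vertex identity $d(x,y)=d_{G_1}(x,o)+d_{G_2}(o,y)$ across the lobes, and the fact that within each lobe the star-product distance coincides with $d_{G_i}$ (both follow because every walk between the lobes, or leaving and re-entering a lobe, must pass through $o$). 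Your alternative matrix route also works, but the bookkeeping you worry about disappears if you phrase it as a pullback: with $\pi:V\to V_1$ defined by $\pi(x)=x$ on $V_1$ and $\pi(x)=o$ on $V_2$, one has $\tilde D_1[x,y]=D_{G_1}[\pi(x),\pi(y)]$, so $\langle f,\tilde D_1 f\rangle=\langle g,D_{G_1}g\rangle$ with $g$ the pushforward of $f$, namely $g=f_1+f_0+c\,\delta_o$ where $c=\langle\1,f_2\rangle$; note the correction at $o$ is $+c$, not $-c$ as written in your sketch, and mean-zero of $f$ then gives mean-zero of $g$ automatically. Since you present the embedding as the main argument and the matrix computation only as a remark, this sign slip does not affect the proof.
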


\begin{remark}
\normalfont
By definition any graph $G$ admits 
a Cartesian product structure
as $G=G\times K_1$, where $K_1$ is the graph consisting of
a single vertex,
and similarly a star product structure
$G=G\star K_1$.
Those trivial cases are not 
excluded from Propositions \ref{03prop:Cartesian products of QE}
and \ref{03prop:star products of QE}.
In fact, a quadratic embedding of $K_1$ in a Euclidean space
is trivial.
However, for the QE constant we need to avoid $K_1$ since
$\mathrm{QEC}(K_1)$ is not defined.
\end{remark}

\begin{remark}
\normalfont
It is shown \cite{Obata-Zakiyyah2018} that
$\mathrm{QEC}(G_1\times G_2)=0$
for any non-trivial Cartesian product
of two QE graphs $G_1$ and $G_2$.
On the other hand, it is a highly non-trivial problem
to describe $\mathrm{QEC}(G_1\star G_2)$ in terms of
$\mathrm{QEC}(G_1)$ and $\mathrm{QEC}(G_2)$.
Some useful estimates are known, 
see \cite{Baskoro-Obata2021, MO-2018}.
\end{remark}

In view of the unique non-trivial factorization $6=2\times 3$
we can easily specify all graphs on six vertices 
which admit non-trivial Cartesian product structures.
It is also a simple task to
check whether a graph on six vertices admits a
non-trivial star product structure or not.
If $G=G_1\times G_2$ or $G=G_1\star G_2$ is such a 
non-trivial product,
the numbers of vertices of $G_1$ and $G_2$ are less than
or equal to five.
On the other hand, we know that
any graph on less than or equal to five vertices
are of QE class except two graphs G5-10 and G5-17.
We thus collect all graphs on six vertices
which are non-trivial Cartesian or star products of two QE graphs.

\begin{remark}
\normalfont
If $G=G_1\star G_2$ and $G_1$ is one of the two graphs G5-10 and G5-17,
then $G_2$ is necessarily $K_2=P_2$
and $G_1$ is isometrically embedded in $G$.
It then follows from Lemma \ref{02lem:heredity} that
$G$ is of non-QE class.
This case will be discussed in a more general context 
in Subsection \ref{subsec:Non-Primary Non-QE Graphs}.
\end{remark}

The results are summarized in the following table.
Among 112 graphs on six vertices
there are 2 graphs which are non-trivial Cartesian products of 
two QE graphs,
and 51 graphs which are non-trivial star products of 
two QE graphs.
These two classes are mutually exclusive.
Note that the six graphs G6-1$\sim$G6-6 are trees.

\begin{center}
\renewcommand{\arraystretch}{1.2}
\begin{tabular}{|l|l|l|}
\hline
graphs & No. & QE/Non-QE \\
\hline
Cartesian products 
& G6-35 ($K_2\times P_3$), G6-80 ($K_2\times K_3$) & QE 
\\ \hline
star products & G6-1$\sim$18, G6-20, G6-21, G6-23$\sim$29, & QE
\\
of QE graphs & G6-31, G6-33, G6-34, G6-37$\sim$39,  & 
\\
& G6-41$\sim$44, G6-47, G6-48, G6-51, & 
\\
& G6-56, G6-57, G6-62, G6-68$\sim$70, &
\\
& G6-74, G6-77, G6-83, G6-89, G6-98 &  
\\ \hline
\end{tabular}
\end{center}

\subsection{Non-Primary Non-QE Graphs}
\label{subsec:Non-Primary Non-QE Graphs}

Let $G$ be a non-QE graph on six vertices.
If $G$ is not primary, it contains a non-QE graph $H$ as 
an isometrically embedded proper subgraph.
Since $H$ has five or less vertices, it is necessarily
G5-10 or G5-17.
Thus, any non-primary non-QE graph on six vertices
is obtained by adding a vertex $a$ to G5-10 or G5-17.
Since $\mathrm{diam}(\text{G5-10})
=\mathrm{diam}(\text{G5-17})=2$,
it follows from Lemma \ref{02lem:isometric embedding} that
G5-10 or G5-17 is isometrically embedded in $G$
whatever vertices are connected to $a$.
In this way the non-primary non-QE graphs on six vertices are specified.

\begin{proposition}\label{03prop:non-QE containing G5-10}
There are 11 graphs on six vertices containing G5-10 
as an isometrically embedded subgraph, 
which are classified by the degree of additional vertex $a$.

$\deg(a)=1$: G6-22, G6-36,

$\deg(a)=2$: G6-40, G6-49, G6-55, 

$\deg(a)=3$: G6-64, G6-72, G6-73,

$\deg(a)=4$: G6-85, G6-86,

$\deg(a)=5$: G6-96.
\end{proposition}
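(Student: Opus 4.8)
The plan is to prove Proposition 3.8 by a direct enumeration argument, exploiting the fact that diam(G5-10) = 2 so that Lemma 2.6(2) guarantees the isometric embedding automatically. First I would fix a concrete drawing of G5-10 on the vertex set $\{1,2,3,4,5\}$ and record its degree sequence; since G5-10 has a nontrivial automorphism group, I would compute the orbits of vertices (and of unordered vertex-subsets) under that group. The point is that a graph on six vertices containing G5-10 as an induced subgraph is obtained by adjoining one new vertex $a$ and choosing a subset $N(a) \subseteq \{1,\dots,5\}$ of neighbors; two such choices that lie in the same automorphism orbit of G5-10 yield isomorphic six-vertex graphs. So the enumeration reduces to listing orbit representatives of subsets of $\{1,\dots,5\}$, organized by cardinality $\deg(a) = |N(a)| \in \{1,2,3,4,5\}$.

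The key steps, in order, are: (i) determine $\mathrm{Aut}(\text{G5-10})$ and its action on vertices and on $k$-subsets for $k=1,\dots,5$; (ii) for each $k$, list the orbit representatives of $k$-subsets — this gives the candidate six-vertex graphs; (iii) for each candidate, identify which Gm-n label it carries in McKay's list (by matching degree sequence and, when that is not decisive, finer invariants such as the number of triangles, the complement, or the distance-matrix spectrum); (iv) invoke Lemma 2.6(2): since G5-10 appears as an induced subgraph and has diameter $2$, it is isometrically embedded regardless of how $a$ is attached, hence by Lemma 2.2(2) every such $G$ is non-QE; (v) conversely observe that any six-vertex graph containing G5-10 as an isometrically embedded — equivalently, by Lemma 2.6(1), as an induced — subgraph must arise this way, so the list is exhaustive. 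Tabulating the outcome yields exactly the eleven graphs stated, sorted by $\deg(a)$.

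The main obstacle I expect is step (iii): correctly matching each of the eleven constructed graphs to its McKay label, and making sure no two distinct subset-orbits accidentally produce isomorphic graphs (which would shorten the list) while also not missing an orbit. Degree sequence alone will not always separate the cases, so I would keep a small table of secondary invariants (edge count is forced to be $|E(\text{G5-10})| + \deg(a)$, so triangle counts and the characteristic polynomial of the distance matrix are the natural next discriminants). A minor additional check is that for $\deg(a) = 5$ the new vertex is adjacent to everything, and one must confirm the resulting graph still has G5-10 as an \emph{induced} subgraph — it does, since deleting $a$ literally returns G5-10 — so G6-96 is correctly included. Once the bookkeeping is pinned down, the proof is essentially the table itself together with the one-line diameter-$2$ argument.
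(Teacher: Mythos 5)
Your proposal follows essentially the same route as the paper: the paper's argument is exactly that any such $G$ arises by attaching one new vertex $a$ to G5-10, that $\mathrm{diam}(\text{G5-10})=2$ together with Lemma \ref{02lem:isometric embedding} guarantees the isometric embedding regardless of how $a$ is attached, and then the eleven resulting graphs are identified in McKay's list by $\deg(a)$. Your additional bookkeeping via orbits of $\mathrm{Aut}(\text{G5-10})$ and secondary invariants just makes explicit the enumeration the paper leaves implicit, so the proposal is correct and matches the paper's proof.
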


\begin{proposition}\label{03prop:non-QE containing G5-17}
There are 17 graphs on six vertices containing G5-17 
as an isometrically embedded subgraph, 
which are classified by the degree of additional vertex $a$.

$\deg(a)=1$: G6-45, G6-53, G6-54, 

$\deg(a)=2$: G6-64, G6-67, G6-71, G6-72, G6-78,  

$\deg(a)=3$: G6-85, G6-86, G6-88, G6-91, G6-94, 

$\deg(a)=4$: G6-101, G6-102, G6-103,

$\deg(a)=5$: G6-107.
\end{proposition}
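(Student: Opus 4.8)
\textbf{Proof proposal for Proposition \ref{03prop:non-QE containing G5-17}.}
The plan is to enumerate systematically all ways of attaching a single new vertex $a$ to the graph G5-17, then identify the resulting six-vertex graph in McKay's list, and finally invoke the isometric-embedding argument already established in the surrounding text. First I would fix a labeling $V(\text{G5-17})=\{1,2,3,4,5\}$ consistent with Figure \ref{fig:Non-QE graphs on 5 vertices}, and record the automorphism group of G5-17, since two attachments that differ by an automorphism yield isomorphic graphs and must not be double-counted. Then, for each value of $\deg(a)=k$ ranging over $1,2,3,4,5$, I would list the $\binom{5}{k}$ subsets $S\subseteq\{1,2,3,4,5\}$ of potential neighbors of $a$, quotient by the automorphism action, and for each orbit representative write down the $6\times 6$ adjacency (or distance) matrix of the extension $G=\text{G5-17}+a$ and match it against the canonical forms in Appendix C to read off the label Gm-n.

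The second ingredient is to justify that G5-17 is genuinely isometrically embedded in every such $G$, not merely an induced subgraph. This is immediate from Lemma \ref{02lem:isometric embedding}(2): by construction G5-17 is an induced subgraph of $G$ (we add no edges among $\{1,\dots,5\}$), and $\mathrm{diam}(\text{G5-17})=2$ by Theorem \ref{02thm:G5-10 and 17}, so the hypothesis of part (2) is met and the embedding is isometric regardless of how $a$ is joined. Combined with Lemma \ref{02lem:heredity}(2) this shows each listed $G$ is of non-QE class, and moreover non-primary, since G5-17 is a proper subgraph. Conversely, any six-vertex graph containing G5-17 as an isometrically embedded subgraph arises by deleting one vertex from a $6$-vertex graph and recovering G5-17, hence is captured by the enumeration; this gives completeness of the list of $17$ graphs.

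The main obstacle I expect is purely bookkeeping rather than conceptual: carrying out the orbit count correctly and performing the graph-isomorphism matching against McKay's list without error. In particular one must be careful that the same six-vertex graph can appear for two different attaching sets (even for different values of $\deg(a)$ if the new vertex is not the unique vertex of its degree), which is exactly why G6-64, G6-72, G6-85 and G6-86 show up both here and in Proposition \ref{03prop:non-QE containing G5-10}; the statement lists them under the degree of $a$ \emph{relative to the particular embedded copy of} G5-17, so a single ambient graph may legitimately be counted once in each proposition. A clean way to organize the verification is to tabulate, for each orbit representative $S$, the degree sequence of $G$ together with a cheap invariant such as the number of triangles or the spectrum of the distance matrix, which in practice suffices to pin down the McKay label uniquely among the $112$ candidates. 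Once the table is assembled the proposition follows by inspection.
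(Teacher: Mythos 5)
Your proposal is correct and follows essentially the same route as the paper: attach a new vertex $a$ to G5-17 in all possible ways (up to automorphism), note that G5-17 remains an induced subgraph and, since $\mathrm{diam}(\text{G5-17})=2$, is isometrically embedded by Lemma \ref{02lem:isometric embedding}(2) regardless of the attachment, and then identify the resulting graphs in McKay's list. The paper leaves the enumeration and label-matching implicit, so your extra care about automorphism orbits and possible coincidences of ambient graphs is just a more explicit account of the same bookkeeping.
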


The above results are summarized in the following table,
where G6-64, G6-72, G6-85 and G6-86 occur in both classes.
As a result, the non-primary non-QE graphs on six vertices 
are completely specified and the proof of
Theorem \ref{01thm:non-primary non-QE graphs} is completed.
It is also noteworthy that 
any non-QE graph on six vertices not listed here 
is a primary non-QE graph.

\begin{center}
\renewcommand{\arraystretch}{1.2}
\begin{tabular}{|l|l|l|}
\hline
graphs & No. & QE/Non-QE \\
\hline
containing G5-10 as &  G6-22, G6-36, G6-40, G6-49, G6-55, &non-QE  
\\
an isometrically & G6-64, G6-72, G6-73, G6-85, G6-86, &
\\
embedded subgraph & G6-96 &
\\ \hline
containing G5-17 as &  G6-45, G6-53, G6-54, G6-64, G6-67, &non-QE  
\\
an isometrically &  G6-71, G6-72, G6-78, G6-85, G6-86, &
\\
embedded subgraph & G6-88, G6-91, G6-94, G6-101, &
\\
& G6-102, G6-103, G6-107 &
\\ \hline
\end{tabular}
\end{center}

\subsection{Special Series of Graphs with Known QE Constants}

\begin{proposition}[\cite{Obata-Zakiyyah2018}]
For the complete graph $K_n$ with $n\ge2$ we have
\[
\mathrm{QEC}(K_n)=-1.
\]
\end{proposition}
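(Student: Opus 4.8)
The plan is to compute the distance matrix of $K_n$ explicitly and then evaluate the quadratic form directly on the constraint set, exploiting the fact that the complete graph has diameter one. First I would observe that in $K_n$ any two distinct vertices are adjacent, so $d(x,y)=1$ for $x\neq y$ and $d(x,x)=0$; hence the distance matrix is $D=J-I$, where $J$ denotes the all-ones matrix and $I$ the identity on $C(V)\cong\mathbb{R}^n$.

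Next, I would take an arbitrary admissible $f$, that is, one with $\langle f,f\rangle=1$ and $\langle\1,f\rangle=0$. The key observation is that $Jf=\langle\1,f\rangle\1=0$ for such $f$, so that $Df=(J-I)f=-f$ and therefore $\langle f,Df\rangle=-\langle f,f\rangle=-1$. Since this value is attained for \emph{every} $f$ in the constraint set, which is nonempty as $n\ge2$, the maximum defining $\mathrm{QEC}(K_n)$ is exactly $-1$.

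There is essentially no obstacle here: the whole argument collapses to a one-line computation once $D=J-I$ is recorded, and no stationary-point analysis via Proposition~\ref{02prop:QEC} is needed. Alternatively one could simply cite Proposition~\ref{02prop:QEC ge -1}, which already asserts $\mathrm{QEC}(G)\ge-1$ with equality precisely for complete graphs; but the direct evaluation is self-contained and moreover reveals that the quadratic form $\langle f,Df\rangle$ is identically equal to $-1$ on the admissible set rather than merely bounded by it.
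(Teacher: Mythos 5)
Your argument is correct and complete: with $D=J-I$ and the constraint $\langle \1,f\rangle=0$ forcing $Jf=0$, the form $\langle f,Df\rangle$ equals $-1$ identically on the admissible set (nonempty since $n\ge2$), so $\mathrm{QEC}(K_n)=-1$. The paper itself only cites \cite{Obata-Zakiyyah2018} for this fact, and your direct computation is exactly the standard argument behind it, so no further comment is needed.
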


\begin{proposition}[\cite{Mlotkowski2022}]
\label{02ex:path graphs}
\normalfont
For the paths $P_n$ with $n\ge2$ we have
\[
\mathrm{QEC}(P_n)=-\bigg(1+\cos\dfrac{\pi}{n}\bigg)^{-1}.
\]
\end{proposition}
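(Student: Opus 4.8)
The plan is to establish the formula $\mathrm{QEC}(P_n)=-(1+\cos(\pi/n))^{-1}$ by the Lagrange-multiplier method of Proposition~\ref{02prop:QEC}, reducing the problem to a linear-algebra computation on the distance matrix $D=[\,|i-j|\,]_{i,j=1}^n$ of $P_n$. First I would recall that a stationary point $(f,\lambda,\mu)$ satisfies $Df=\lambda f+\tfrac{\mu}{2}\1$ together with $\langle\1,f\rangle=0$ and $\langle f,f\rangle=1$. The key structural observation is that the distance matrix of the path is almost tridiagonal under a discrete second-difference operation: if $L$ denotes the path Laplacian (the tridiagonal matrix with $2$ on the interior diagonal, $1$ at the two corners, and $-1$ on the off-diagonals), then one checks directly that $LD$ has a very simple form (essentially $-2$ times the identity away from boundary rows, with controlled boundary terms). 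Equivalently, applying the second difference to the eigenvalue equation $Df=\lambda f+\tfrac{\mu}{2}\1$ kills the $\1$-term and turns it into a constant-coefficient linear recurrence for the components $f_i$, whose characteristic equation forces $f_i$ to be a linear combination of $\cos(\theta i)$ and $\sin(\theta i)$ with $\lambda=-1/(2(1-\cos\theta))$ or similar; the precise relation between $\lambda$ and $\theta$ falls out of the recurrence.

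Next I would impose the boundary rows of $Df=\lambda f+\tfrac{\mu}{2}\1$ (the first and last components, which the second-difference step discarded) together with the constraint $\langle\1,f\rangle=0$. These give the quantization condition on $\theta$. I expect the admissible $f$ to be, up to normalization, $f_i=\cos\bigl(\theta(i-\tfrac{n+1}{2})\bigr)$ for suitable $\theta$, and the orthogonality to $\1$ plus the boundary equations to pin down $\theta=\pi/n$ as the value giving the largest $\lambda$. Substituting back yields $\lambda=-(1+\cos(\pi/n))^{-1}$, and by Proposition~\ref{02prop:QEC} this maximal $\lambda$ over all stationary points is $\mathrm{QEC}(P_n)$. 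One should also separately treat the boundary case $n=2$, where $P_2=K_2$ and the formula reads $-(1+\cos(\pi/2))^{-1}=-1=\mathrm{QEC}(K_2)$, consistent with Proposition~\ref{02prop:QEC ge -1}; and check $n=3$ against the known table value.

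The main obstacle I anticipate is handling the boundary rows cleanly: the second-difference trick is clean on interior indices, but the first and last equations of $Df=\lambda f+\tfrac{\mu}{2}\1$ couple $\lambda$, $\mu$, and the phase/amplitude of the trigonometric ansatz, and one must verify that the $\1$-component $\mu$ is consistent (in particular nonzero or zero as dictated) rather than over-determining the system. A careful way to organize this is to note that $P_n$ is transmission-regular only for... it is not, so $\1$ is not an eigenvector of $D$, and the interplay of $\mu\neq0$ with the symmetry of $f$ under the reflection $i\mapsto n+1-i$ must be tracked: the eigenfunction achieving the maximum is symmetric, which is what makes $\langle\1,f\rangle=0$ compatible with the cosine ansatz centered at $(n+1)/2$. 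Once the symmetric sector is isolated, the computation is a routine trigonometric identity, so the real work is the bookkeeping that confirms no larger $\lambda$ arises from the antisymmetric sector or from degenerate ($\theta\in\{0,\pi\}$) solutions.
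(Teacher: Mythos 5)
The paper offers no proof of this proposition — it is quoted from M\l otkowski's paper \cite{Mlotkowski2022} — so your argument has to stand on its own, and as written it contains one claim that is wrong and one step that is missing. The wrong claim is the symmetry assertion. Carrying out your second-difference reduction on $Df=\lambda f+\tfrac{\mu}{2}\1$ gives $\lambda(f_{i+1}-2f_i+f_{i-1})=2f_i$ for $2\le i\le n-1$, so the trigonometric ansatz $f_i=\cos\bigl(\theta(i-\tfrac{n+1}{2})\bigr)$ or $f_i=\sin\bigl(\theta(i-\tfrac{n+1}{2})\bigr)$ forces $\cos\theta=1+1/\lambda$, i.e.\ $\lambda=-1/(1-\cos\theta)$; the target value therefore corresponds to $\theta=\pi-\pi/n$, not $\theta=\pi/n$. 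Imposing the two boundary rows (together with $\langle\1,f\rangle=0$, which they encode) quantizes $\theta$ differently in the two sectors: $\sin(n\theta/2)=0$, i.e.\ $\theta=2k\pi/n$, in the symmetric sector, and $\cos(n\theta/2)=0$, i.e.\ $\theta=(2k+1)\pi/n$, in the antisymmetric sector. Hence $\theta=(n-1)\pi/n$ lies in the symmetric family only when $n$ is odd; for even $n$ the maximizer is \emph{antisymmetric}, with $\mu=0$ (it is a genuine eigenvector of $D$, which is exactly why $\mathrm{QEC}(P_{2m})=\lambda_2(D)$ as recalled in the Introduction). Your plan to ``isolate the symmetric sector'' would, for $n=4$, produce $-1/(1-\cos(\pi/2))=-1$ instead of the correct $-1/(1+\cos(\pi/4))\approx-0.586$, so this step fails for every even $n$. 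The clean statement is that the two sectors together give $\theta=m\pi/n$, $m=1,\dots,n-1$, with stationary values $\lambda_m=-1/(1-\cos(m\pi/n))$, maximized at $m=n-1$.

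The missing step is the exclusion of stationary points whose recurrence has real characteristic roots. The substitution $\cos\theta=1+1/\lambda$ only covers $\lambda\le-\tfrac12$; for $\lambda\in(-\tfrac12,0)$, $\lambda=0$, or $\lambda>0$ the solutions of the recurrence are hyperbolic (or degenerate), and since the claimed maximum $-1/(1+\cos(\pi/n))$ exceeds $-\tfrac12$ in absolute value but not in order — it lies in $(-1,-\tfrac12)$ — you must show no stationary $\lambda\ge-\tfrac12$ exists before you may conclude via Proposition \ref{02prop:QEC}. Dismissing only ``$\theta\in\{0,\pi\}$'' does not do this. Two ways to close it: either run the analogous $\cosh/\sinh$ computation and check the boundary conditions admit only the trivial solution, or observe that the stationary $\lambda$'s are precisely the eigenvalues of the compression of $D$ to the hyperplane $\1^\perp$, an $(n-1)$-dimensional problem, and that the $n-1$ distinct values $\lambda_m$ above already exhaust them, so no further stationary values can occur. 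With the parity issue corrected and this exhaustion argument added, your approach does yield the formula (and agrees with the checks $n=2,3$).
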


\begin{proposition}[\cite{Obata-Zakiyyah2018}]
\label{02ex:cycle graphs}
For the cycles on odd number of vertices we have
\[
\mathrm{QEC}(C_{2n+1})=-\bigg(4\cos^2\dfrac{\pi}{2n+1}\bigg)^{-1},
\qquad n\ge1,
\]
and for those on even number of vertices we have
\[
\mathrm{QEC}(C_{2n})=0,
\qquad
n\ge2.
\]
\end{proposition}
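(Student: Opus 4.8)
Looking at this, the final statement to prove is Proposition \ref{02ex:cycle graphs} about QE constants of cycle graphs.

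\textbf{Proof proposal.}
The plan is to compute the maximum in the definition of $\mathrm{QEC}(C_m)$ by exploiting the circulant structure of the distance matrix of a cycle. For $C_m$ with vertex set $\mathbb{Z}/m\mathbb{Z}$, the distance matrix is $D=[d(i,j)]$ with $d(i,j)=\min\{|i-j|,\,m-|i-j|\}$, which is a symmetric circulant matrix. Its eigenvectors are the discrete Fourier modes $e_k=(1,\omega^k,\omega^{2k},\dots)$ with $\omega=e^{2\pi\mathrm{i}/m}$, and the corresponding eigenvalues are $\mu_k=\sum_{r=0}^{m-1} d(0,r)\,\omega^{kr}$. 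Since $\langle\1,f\rangle=0$ means $f$ is orthogonal to the all-ones vector $e_0$, and since $e_0$ is itself an eigenvector of $D$ (with eigenvalue the common row sum), the constrained maximum $\mathrm{QEC}(C_m)$ equals the largest eigenvalue of $D$ \emph{other than} $\mu_0$. So the whole problem reduces to evaluating the Fourier sums $\mu_k$ for $k=1,\dots,m-1$ and picking out the largest.

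The key computational step is a closed-form evaluation of $\mu_k=\sum_r d(0,r)\cos(2\pi kr/m)$. Here the even/odd cases genuinely diverge. For $m=2n+1$ one sums $d(0,r)$ running $1,2,\dots,n,n,\dots,2,1$ against cosines; using the standard identity $\sum_{r=1}^{n}\cos(r\theta)=\tfrac12\big(\frac{\sin((n+\tfrac12)\theta)}{\sin(\theta/2)}-1\big)$ together with Abel summation (or differentiating a geometric sum) one gets, for $k\not\equiv 0$, a clean expression whose maximum over $k$ is attained at $k=n$ (equivalently $k=1$ after relabelling), yielding $\mu_{\max}=-\big(4\cos^2\frac{\pi}{2n+1}\big)^{-1}$. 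For $m=2n$ the distances are $1,2,\dots,n-1,n,n-1,\dots,1$ and the single middle value $n=d(0,n)$ contributes a term $n(-1)^k$; the Fourier sums split according to the parity of $k$, and the crucial observation is that for \emph{every} $k\ne 0$ one finds $\mu_k\le 0$, with $\mu_k=0$ occurring (namely for the mode $k=n/1$ appropriately, giving the alternating eigenvector). Hence $\mathrm{QEC}(C_{2n})=0$.

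I would organize the write-up as: (i) record that $D$ is circulant and identify its spectrum via Fourier modes, noting $e_0$ carries the row-sum eigenvalue and is the only eigenvector in the direction of $\1$, so $\mathrm{QEC}(C_m)=\max_{1\le k\le m-1}\mu_k$; (ii) do the odd case, evaluating $\mu_k$ and showing the maximum is $-\big(4\cos^2\frac{\pi}{2n+1}\big)^{-1}$ (one can cross-check against $C_3=K_3$, giving $-1$, consistent with $\mathrm{QEC}(K_n)=-1$, and against $C_5=\text{G5-10 complement?}$—actually against the known table value); (iii) do the even case, showing all nonzero-mode eigenvalues are $\le 0$ with $0$ attained, so $\mathrm{QEC}(C_{2n})=0$ (cross-check: $C_4$ is a Cartesian product $K_2\times K_2$, and by the earlier remark any nontrivial Cartesian product of QE graphs has QEC $=0$). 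The main obstacle is the trigonometric bookkeeping in step (ii): getting the finite cosine sums weighted by the ``tent'' sequence $1,2,\dots,n,n,\dots,1$ into a form where the dependence on $k$ is transparent enough to read off the maximizer. A convenient device is to write the tent sequence as a discrete convolution of the indicator of $\{1,\dots,n\}$ with itself (up to a shift), so that $\mu_k$ factors as (roughly) the squared modulus of a geometric partial sum, i.e.\ $\mu_k \sim -\big|\sum_{r=1}^{n}\omega^{kr}\big|^2 \big/ (\text{something})$ — this both explains the sign and pinpoints the extremum. Once that factorization is in hand both cases follow quickly.
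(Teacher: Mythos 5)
The paper does not prove this proposition at all --- it is imported from \cite{Obata-Zakiyyah2018} as a known result --- so there is no internal argument to compare with; judged on its own, your plan is the natural one and it does work. The reduction is correctly justified: $D$ is a symmetric circulant, so $\1$ is an eigenvector (carrying the row sum), the orthogonal complement of $\1$ is $D$-invariant and spanned by the remaining Fourier modes, and hence $\mathrm{QEC}(C_m)=\max_{k\neq 0}\mu_k$. Your convolution device is also essentially right, but not quite as stated: the tent $d(0,r)=\min\{r,m-r\}$ is not itself a convolution of a boxcar; rather $d(0,r)=n-(a\star a)(r)$, where $a$ is the indicator of $\{1,\dots,n\}$ and $\star$ denotes cyclic autocorrelation (this identity holds for both $m=2n+1$ and $m=2n$). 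Since the constant $n$ contributes nothing to the modes $k\neq0$, one gets exactly $\mu_k=-\bigl|\sum_{j=1}^{n}\omega^{kj}\bigr|^2=-\sin^2(\pi kn/m)\big/\sin^2(\pi k/m)$, which shows at once that every non-Perron eigenvalue is $\le 0$ and settles both parities uniformly: for $m=2n+1$ this equals $-\tfrac14\sec^2(\pi k/(2m))$ for even $k$ and $-\tfrac14\csc^2(\pi k/(2m))$ for odd $k$, with maximum $-\bigl(4\cos^2\frac{\pi}{2n+1}\bigr)^{-1}$; for $m=2n$ it vanishes precisely at the even $k\neq0$ and equals $-1/\sin^2(k\pi/(2n))<0$ at odd $k$, giving $\mathrm{QEC}(C_{2n})=0$ for $n\ge2$.

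Two index identifications in your sketch are wrong and happen to sit exactly in the ``bookkeeping'' you defer, so fix them. In the odd case the maximizing modes are $k=2$ and $k=m-2$, not $k=n$ in general: for $C_7$ one has $\mu_3=-\tfrac14\sec^2(2\pi/7)\approx-0.64$ while $\mu_2=\mu_5=-\tfrac14\sec^2(\pi/7)\approx-0.31$, so your claim only accidentally holds for $m=3,5$. In the even case the zero eigenvalue is carried by the even nonzero modes, not by the alternating mode $k=n$: for $C_6$ the alternating vector has eigenvalue $-1$, while $k=2,4$ give $0$; this is also where the hypothesis $n\ge2$ enters, since a nonzero even mode must exist. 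With the autocorrelation identity in place these corrections are routine, and the stated values then follow.
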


\begin{proposition}[\cite{Obata2022}]
\label{03prop:complete multipartite graphs}
For the complete multipartite graph
$K_{m_1,m_2,\dots,m_k}$ with 
$k\ge2$ and $m_1\ge m_2\ge \dotsb \ge m_k\ge1$, we have
\[
\mathrm{QEC}(K_{m_1,m_2,\dots,m_k})=
\begin{cases}
-2+m_1, & \text{if $m_1=m_2$}, \\
-2-\alpha^*, & \text{if $m_1>m_2$},
\end{cases}
\]
where $\alpha^*$ is the minimal solution to 
\[
\sum_{i=1}^k \frac{m_i}{\alpha+m_i}=0.
\]
Note that $-m_1<\alpha^*<-m_2$.
\end{proposition}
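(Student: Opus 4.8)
The plan is to exploit the very rigid structure of the distance matrix of $G=K_{m_1,\dots,m_k}$. Write $V=V_1\sqcup\dots\sqcup V_k$ for the partition into independent sets with $|V_i|=m_i$, and set $n=m_1+\dots+m_k$. Since the graph distance takes only the values $0$, $1$ (between distinct blocks) and $2$ (within one block), we have
\[
D=J_n+\bigoplus_{i=1}^k J_{m_i}-2I_n,
\]
where $J_\ell$ denotes the $\ell\times\ell$ all-ones matrix. If $f\in C(V)$ satisfies $\langle\1,f\rangle=0$, then $J_nf=0$, so that $\langle f,Df\rangle=\sum_{i=1}^k s_i(f)^2-2$, where $s_i(f)=\sum_{x\in V_i}f(x)$. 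Hence the first step is the reduction
\[
\mathrm{QEC}(G)=-2+\max\Bigl\{\,\textstyle\sum_{i=1}^k s_i(f)^2\,;\, \|f\|=1,\ \langle\1,f\rangle=0\,\Bigr\}.
\]

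Next I would pass to a $k$-dimensional problem. The objective depends on $f$ only through the block sums $s_i=s_i(f)$, while Cauchy--Schwarz gives $\sum_i\|f|_{V_i}\|^2\ge\sum_i s_i^2/m_i$ with equality for $f$ constant on each block; a scaling argument then yields
\[
\mathrm{QEC}(G)=-2+\max\Bigl\{\,\textstyle\sum_{i=1}^k s_i^2\,;\, s\in\mathbb{R}^k,\ \sum_i s_i^2/m_i=1,\ \sum_i s_i=0\,\Bigr\}.
\]
Substituting $t_i=s_i/\sqrt{m_i}$, this becomes $\mathrm{QEC}(G)+2=\max\{\langle t,Mt\rangle\,;\, \|t\|=1,\ \langle v,t\rangle=0\}$ with $M=\diag(m_1,\dots,m_k)$ and $v=(\sqrt{m_1},\dots,\sqrt{m_k})$; equivalently $\mathrm{QEC}(G)+2$ is the largest eigenvalue $\theta$ of $M$ restricted to the hyperplane $v^{\perp}$. (One could also reach this point by applying Proposition~\ref{02prop:QEC} to the $k$-dimensional optimization.)

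The third step evaluates $\theta$ case by case. If $m_1=m_2$, then $e_1-e_2\in v^{\perp}$ is an eigenvector of $M$ for its largest eigenvalue $m_1$; since restriction to a subspace cannot increase the top eigenvalue, $\theta=m_1$ and $\mathrm{QEC}(G)=-2+m_1$. If $m_1>m_2$, the eigenvectors of $M$ with eigenvalue $m_1$ are the multiples of $e_1$, which are not orthogonal to $v$, so $\theta<m_1$; the test vector $t=\sqrt{m_2}\,e_1-\sqrt{m_1}\,e_2\in v^{\perp}$ gives $\theta\ge 2m_1m_2/(m_1+m_2)>m_2$. Thus $\theta\notin\{m_1,\dots,m_k\}$, the Lagrange conditions for a maximizer read $(m_i-\theta)t_i=\tfrac{\nu}{2}\sqrt{m_i}$, necessarily with $\nu\neq0$, and inserting $t_i=\tfrac{\nu\sqrt{m_i}}{2(m_i-\theta)}$ into $\langle v,t\rangle=0$ gives the secular equation $\sum_i m_i/(m_i-\theta)=0$. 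Since $\theta\mapsto\sum_i m_i/(m_i-\theta)$ is increasing from $-\infty$ to $+\infty$ on $(m_2,m_1)$ and stays negative for $\theta>m_1$, the value $\theta\in(m_2,m_1)$ is its largest real root. Putting $\alpha=-\theta$ turns the equation into $\sum_i m_i/(\alpha+m_i)=0$, of which $\alpha^*=-\theta$ is the minimal solution, with $-m_1<\alpha^*<-m_2$; hence $\mathrm{QEC}(G)=-2-\alpha^*$.

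The bookkeeping in the first two steps is routine. The point that needs the most care is the last one: verifying that the Lagrange analysis singles out the \emph{largest} root of the secular equation, and that this root lies strictly inside $(m_2,m_1)$, so that the non-degenerate formula $\mathrm{QEC}(G)=-2-\alpha^*$ applies exactly when $m_1>m_2$ and collapses to $-2+m_1$ when $m_1=m_2$. The monotonicity and sign analysis of $\theta\mapsto\sum_i m_i/(m_i-\theta)$ across its poles is precisely what makes this dichotomy sharp and also delivers the stated bound $-m_1<\alpha^*<-m_2$.
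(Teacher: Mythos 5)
Your argument is correct, but note that the paper itself does not prove this proposition: it is quoted from the reference [Obata2022], so there is no internal proof to compare line by line. Measured against the paper's general methodology (the Lagrange-multiplier scheme of Proposition \ref{02prop:QEC}, carried out on the full block-structured distance matrix in Appendices A and B, where symmetry forces the optimizer to be constant on blocks and one lands on a secular equation), your route is a genuine streamlining: you first use $D=J_n+\bigoplus_i J_{m_i}-2I_n$ and $J_nf=0$ on $\{\langle\1,f\rangle=0\}$ to reduce to maximizing $\sum_i s_i(f)^2$, then replace the $n$-variable problem by the $k$-variable one via the Cauchy--Schwarz/scaling step, obtaining $\mathrm{QEC}(G)+2$ as the top eigenvalue of the compression of $M=\diag(m_1,\dots,m_k)$ to $v^{\perp}$, $v=(\sqrt{m_1},\dots,\sqrt{m_k})$. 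This buys two things the full-matrix computation only delivers implicitly: the dichotomy is read off from eigenvector orthogonality (if $m_1=m_2$ the vector $e_1-e_2$ lies in $v^{\perp}$ and gives $\theta=m_1$; if $m_1>m_2$ the $m_1$-eigenvector is excluded, and the test vector $\sqrt{m_2}\,e_1-\sqrt{m_1}\,e_2$ pins $\theta\in(m_2,m_1)$), and the identification of $\theta$ with the unique root of $\sum_i m_i/(m_i-\theta)=0$ in $(m_2,m_1)$ --- equivalently $\alpha^*=-\theta$ the minimal root of $\sum_i m_i/(\alpha+m_i)=0$ with $-m_1<\alpha^*<-m_2$ --- follows from the monotonicity of the secular function between its poles and its negativity beyond $m_1$. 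The checks you flag (that the Lagrange multiplier of the sphere constraint equals the optimal value, that $\nu\neq0$ because $\theta\neq m_i$, and that no root exceeds $m_1$) are exactly the points that need care, and you handle them; the result also agrees with the sanity checks $K_{2,1}=P_3$ and $K_{2,2}=C_4$ against Propositions \ref{02ex:path graphs} and \ref{02ex:cycle graphs}.
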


Finally, we also use the following new result,
of which the proof and related discussion are deferred in the Appendix A.

\begin{proposition}\label{03prop:wedge product}
For $1\le m\le n$
let $K_n\wedge K_{m,1}$ be the graph obtained 
by putting $m$ vertices of the complete graph $K_n$
and the $m$ end-vertices of the star $K_{m,1}$ together.
Then we have
\[
\mathrm{QEC}(K_n\wedge K_{m,1})
=\frac{-2n+m-1+\sqrt{n(n-m)(m+1)}}{n+1}\,.
\]
\end{proposition}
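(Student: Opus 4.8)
The plan is to apply the Lagrange multiplier method of Proposition \ref{02prop:QEC} directly to the distance matrix of $G = K_n \wedge K_{m,1}$. First I would set up coordinates: label the $n$ vertices of $K_n$ as $1,\dots,n$, with vertices $1,\dots,m$ being the ones identified with the end-vertices of the star, and let $0$ be the center of the star. Then $d(0,i)=1$ for $1\le i\le m$ and $d(0,i)=2$ for $m+1\le i\le n$ (going through $K_n$), while $d(i,j)=1$ for all $1\le i<j\le n$. So the distance matrix has a block structure determined by three groups of vertices: $\{0\}$, $A=\{1,\dots,m\}$, and $B=\{m+1,\dots,n\}$. Writing $f\in C(V)$ as $(f_0, f_A, f_B)$, the symmetry of $G$ under permutations within $A$ and within $B$ means that an extremal $f$ can be taken constant on $A$ and constant on $B$; I would justify this by the standard averaging argument (the distance matrix commutes with the corresponding permutation action, so eigenvectors can be chosen within isotypic components, and the top eigenvalue of the relevant constrained problem is attained on the trivial-isotypic part after we quotient out by the within-block differences, which only ever see eigenvalue $-1$ coming from $K_n$-type vectors).

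Once reduced to the three-parameter ansatz $f=(a,b,\dots,b,c,\dots,c)$ with the constraints $\langle \1,f\rangle = a+mb+(n-m)c=0$ and $\langle f,f\rangle = a^2+mb^2+(n-m)c^2=1$, the quadratic form $\langle f,Df\rangle$ becomes an explicit quadratic in $a,b,c$, and the stationarity equations $Df=\lambda f+\mu\1$ reduce to a small linear system. The next step is to eliminate $\mu$ using the zero-sum constraint (e.g. by replacing $D$ with $D$ acting on the orthogonal complement of $\1$, or simply by subtracting equations), which leaves a $3\times 3$ (effectively $2\times 2$ after using the constraint) eigenvalue problem whose characteristic equation is a quadratic in $\lambda$. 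Solving that quadratic should produce the two candidate values of $\lambda$, one of which is the claimed $\mathrm{QEC}$.

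I expect the main obstacle to be twofold. The bookkeeping one is keeping the row-sum/transmission data straight: the three vertex classes have different transmissions ($1\cdot m + 2\cdot(n-m)$ for vertex $0$, $1\cdot 1 + \dots$ for a vertex in $A$ — note that $0$ contributes distance $1$ — and likewise for $B$ with $0$ contributing $2$), so the reduced matrix is not symmetric-looking and one must be careful. The substantive one is verifying that the root we pick is genuinely the \emph{maximum} over all stationary points, i.e. that no other stationary point (in particular none coming from the non-symmetric eigenvectors, which all have $\lambda=-1<$ the claimed value since it is visibly positive-or-not-too-negative, or the other root of the quadratic) exceeds it; by Proposition \ref{02prop:QEC} it suffices to enumerate all stationary $\lambda$ and take the largest, so I would list the full spectrum of the reduced problem and compare. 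A sanity check at $m=n$ (where $K_n\wedge K_{n,1}$ is the cone over $K_n$, namely $K_{n+1}$ minus... actually it is $K_{1}\star$-type, giving $\mathrm{QEC}=-1$ since the formula yields $(-2n+n-1+0)/(n+1)=-1$) and at $m=1$ (a pendant vertex attached to $K_n$, i.e. $K_n\star K_2$) would confirm the algebra.
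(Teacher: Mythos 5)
Your proposal follows essentially the same route as the paper's Appendix A: the same three-block realization of $K_n\wedge K_{m,1}$ on $\{0\}\cup\{1,\dots,m\}\cup\{m+1,\dots,n\}$, the Lagrange-multiplier scheme of Proposition \ref{02prop:QEC}, reduction to a vector constant on each block, and a quadratic in $\lambda$ whose larger root is the stated value, with the case $m=n$ checked separately. The only cosmetic difference is that you justify the block-constant ansatz by a symmetry/averaging argument (observing that within-block difference vectors only see eigenvalue $-1$), whereas the paper reads the constancy of the block entries directly off the stationarity equations using $\lambda>-1$; the computation is otherwise identical.
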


It is an easy task to collect all 
complete multipartite graphs on six vertices
together with their QE constants by
Proposition \ref{03prop:complete multipartite graphs}.
Among them there are three non-QE graphs
which are not primary. 
On the other hand, 
it follows from Proposition \ref{03prop:wedge product} that
\[
\mathrm{QEC}(K_5\wedge K_{m,1})
=\frac{-11+m+\sqrt{5(5-m)(m+1)}}{6}
<0,
\qquad 1\le m\le 5.
\]
Hence $K_5\wedge K_{m,1}$ are of QE class for $1\le m\le 5$.
Note that $K_5\wedge K_{1,1}=K_5\star K_2$
and $K_5\wedge K_{5,1}=K_6$.
The results in this subsection are summarized in the following table.

\begin{center}
\renewcommand{\arraystretch}{1.2}
\begin{tabular}{|l|l|l|}
\hline
graphs & No. & QE/Non-QE \\
\hline
complete graphs & G6-112 ($K_6$) & QE 
\\ \hline
paths & G6-6 ($P_6$) & QE 
\\ \hline
cycles & G6-19 ($C_6$) & QE 
\\ \hline
complete &
G6-1 ($K_{5,1}$),
G6-61 ($K_{4,1,1}$) & QE 
\\
multipartite graphs
& G6-104 ($K_{3,1,1,1}$), G6-108 ($K_{2,2,2}$) & 
\\
& G6-110 ($K_{2,2,1,1}$), G6-111 ($K_{2,1,1,1,1}$), & 
\\ \cline{2-3}
 & G6-40 ($K_{4,2}$), G6-73 ($K_{3,3}$), & Non-QE 
\\
 & G6-96 ($K_{3,2,1}$), & 
\\ \hline
$K_5\wedge K_{m,1}$ & G6-98 ($K_5\wedge K_{1,1}$),
G6-105 ($K_5\wedge K_{2,1}$), & QE
\\
& G6-109 ($K_5\wedge K_{3,1}$), G6-111 ($K_5\wedge K_{4,1}$) &
\\
& G6-112 ($K_5\wedge K_{5,1}$) &
\\ \hline
\end{tabular}
\end{center}

\subsection{Graph Joins of Regular Graphs}

Let $G_1=(V_1,E_1)$ and $G_2=(V_2,E_2)$ be
two graphs with disjoint vertex sets.
The \textit{graph join} of $G_1$ and $G_2$ is 
a graph $G=(V,E)$, where
\[
V=V_1\cup V_2,
\qquad
E=E_1\cup E_2\cup\{\{x,y\}\,;\, x\in V_1,\,\, y\in V_2\},
\]
and is denoted by $G=G_1+G_2$.
Note that $G$ becomes a connected graph 
even if $G_1$ or $G_2$ is not connected.
Quite a few graphs admit graph join structures but
we have no general formula for $\mathrm{QEC}(G_1+G_2)$
except the case where both
$G_1$ and $G_2$ are regular graphs.

\begin{proposition}[\cite{Lou-Obata-Huang2022}]
\label{03prop:QEC(G_1+G_2)}
For $i=1,2$ let $G_i=(V_i,E_i)$ be
a $r_i$-regular graph on $n_i=|V_i|$ vertices
with $V_1\cap V_2=\emptyset$, where $r_i\ge0$ and $n_i\ge1$.
Then we have
\[
\mathrm{QEC}(G_1+G_2)=-2+
\max\left\{
-\lambda_{\min}(G_1),\,
-\lambda_{\min}(G_2),\,
\frac{2n_1n_2-r_1n_2-r_2n_1}{n_1+n_2}\right\},
\]
where $\lambda_{\min}(G_i)$ is the minimal eigenvalue of
the adjacency matrix of $G_i$.
\end{proposition}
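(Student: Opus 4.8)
The plan is to compute the stationary points of the Lagrangian $\varphi(f,\lambda,\mu)$ in Proposition \ref{02prop:QEC} directly from the block structure of the distance matrix of $G_1+G_2$. Since $G=G_1+G_2$ has diameter at most $2$ (any two vertices are either adjacent or share a common neighbour through the other part, unless they already lie in opposite parts, in which case they are adjacent), the distance matrix is $D = 2(J-I) - A$, where $A$ is the adjacency matrix of $G$ and $J$ is the all-ones matrix. Writing $C(V)=C(V_1)\oplus C(V_2)$, the adjacency matrix has the block form $A=\begin{pmatrix} A_1 & J_{12} \\ J_{21} & A_2 \end{pmatrix}$ with $A_i$ the adjacency matrix of $G_i$ and $J_{12}$ the all-ones $n_1\times n_2$ block. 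First I would substitute $D=2J-2I-A$ into the stationarity condition $Df=\lambda f + \tfrac{\mu}{2}\1$; since we restrict to $f$ with $\langle \1,f\rangle=0$, the $2J$ term drops out and the eigenvalue problem becomes $-Af = (\lambda+2)f + \tfrac{\mu}{2}\1$, i.e. $Af = -(\lambda+2)f - \tfrac{\mu}{2}\1$.

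The key step is to split the analysis according to whether $f$ is orthogonal to the two ``local constant'' vectors $\1_{V_1},\1_{V_2}$ or not. On the orthogonal complement of $\mathrm{span}\{\1_{V_1},\1_{V_2}\}$, the blocks $J_{12},J_{21}$ act as zero and $J=0$, so $Af = A_1 f_1 \oplus A_2 f_2$ and the problem decouples: any eigenvector of $A_i$ orthogonal to $\1_{V_i}$ (extended by zero to the other part) is a stationary point, contributing $\lambda = -2-\nu$ for each such eigenvalue $\nu$ of $A_i$. Because $G_i$ is $r_i$-regular, $\1_{V_i}$ is the Perron eigenvector with eigenvalue $r_i$, and the remaining eigenvalues of $A_i$ are exactly the non-Perron ones; the smallest among them is $\lambda_{\min}(G_i)$ (for $n_i\ge 2$; if $n_i=1$ this part is empty), so the largest $\lambda$ arising this way is $-2-\lambda_{\min}(G_i)$, which matches the first two entries in the max. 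Then on the two-dimensional space $\mathrm{span}\{\1_{V_1},\1_{V_2}\}$, intersected with $\1^\perp$, one gets a single admissible direction $f=\alpha\1_{V_1}-\beta\1_{V_2}$ with $\alpha n_1=\beta n_2$; plugging into $Af=-(\lambda+2)f-\tfrac\mu2\1$ and using $A_1\1_{V_1}=r_1\1_{V_1}$, $J_{21}\1_{V_1}=n_1\1_{V_2}$, etc., yields a $2\times 2$ linear system whose solvability pins down $\lambda$. A short computation should give exactly $\lambda = -2 + \dfrac{2n_1n_2-r_1n_2-r_2n_1}{n_1+n_2}$, the third entry in the max.

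Having enumerated all stationary values of $\lambda$, Proposition \ref{02prop:QEC} says $\mathrm{QEC}(G)$ is their maximum, which is precisely $-2+\max\{-\lambda_{\min}(G_1),-\lambda_{\min}(G_2),(2n_1n_2-r_1n_2-r_2n_1)/(n_1+n_2)\}$; this completes the proof. The main obstacle I anticipate is bookkeeping at the boundary cases: when $n_i=1$ there is no non-Perron eigenvalue of $A_i$, so the term $-\lambda_{\min}(G_i)$ should be dropped (equivalently interpreted as $-\infty$, or one checks the formula still holds because the join-term dominates), and when $r_i=n_i-1$ (i.e. $G_i$ complete) some eigenspaces coincide; one must verify the diameter-$2$ claim and the eigenvalue count remain correct in these degenerate situations. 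A secondary point requiring care is confirming that the ``decoupled'' eigenvectors, extended by zero, genuinely satisfy $\langle\1,f\rangle=0$ and the full stationarity equation including the $\mu$-term (here $\mu=0$ for those), and that no stationary point has been double-counted when combining the two subspaces.
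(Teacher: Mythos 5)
Your proposal is sound, but note that the paper itself gives no proof of this proposition --- it is imported from \cite{Lou-Obata-Huang2022} as a black box --- so there is nothing internal to compare against; what you wrote is a legitimate self-contained derivation in the spirit of Proposition \ref{02prop:QEC}. The key points all check out: the join has diameter at most $2$, so $D=2(J-I)-A$; the subspace $W=\mathrm{span}\{\1_{V_1},\1_{V_2}\}$ and its orthogonal complement are both $A$-invariant and $\1\in W$, so any stationary $f$ with nonzero $W^\perp$-component forces $\lambda=-2-\nu$ with $\nu$ a non-Perron eigenvalue of $A_1$ or $A_2$ (take $\mu=0$ to see these are all attained), and for regular $G_i$ the smallest such $\nu$ is $\lambda_{\min}(G_i)$ whenever $n_i\ge2$; and in the one admissible direction of $W\cap\1^\perp$, writing $\alpha n_1=\beta n_2$ and subtracting the two scalar equations to eliminate $\mu$ gives exactly $\lambda+2=\frac{2n_1n_2-r_1n_2-r_2n_1}{n_1+n_2}$, which I verified. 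Your flagged boundary issues resolve as you anticipate: mixed stationary points (both components nonzero) produce no new values of $\lambda$; for $n_i=1$ the term $-\lambda_{\min}(G_i)=0$ has no stationary point behind it, but since $r_j\le n_j-1$ the join term is at least $1$, so keeping it in the maximum is harmless; and the degenerate cases $G_i$ complete or empty cause no trouble since $D=2(J-I)-A$ and the identification of the smallest non-Perron eigenvalue with $\lambda_{\min}(G_i)$ still hold. The only cosmetic gap is that Proposition \ref{02prop:QEC} is stated for $n\ge3$, so the single excluded case $n_1=n_2=1$ (i.e. $K_2$, $\mathrm{QEC}=-1$) should be checked directly against the formula, which it satisfies.
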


According to the partition $6=1+5=2+4=3+3$ we can easily list
the graphs on six vertices
which are graph joins of two regular graphs.
Their QE constants are obtained easily by
the formula in Proposition \ref{03prop:QEC(G_1+G_2)}
and thereby we can classify them into the classes of QE or non-QE.
The results are summarized in the following table
without mentioning the QE constants.

\begin{center}
\renewcommand{\arraystretch}{1.2}
\begin{tabular}{|l|l|l|}
\hline
graphs & No. & QE/Non-QE \\
\hline
graph joins  & G6-112 ($K_1+K_5$), G6-92 ($K_1+C_5$)  & QE 
\\
($1+5$) & G6-1 ($K_1+\bar{K}_5$) &
\\ \hline
graph joins & G6-112 ($K_2+K_4$), G6-110 ($K_2+C_4$) & QE 
\\
($2+4$) & G6-97 ($K_2+(K_2\cup K_2)$), G6-61 ($K_2+\bar{K}_4$), &
\\
& G6-111 ($\bar{K}_2+K_4$), G108 ($\bar{K}_2+C_4$) & 
\\ \cline{2-3} 
 & G6-88 \rule[0pt]{0pt}{16pt}($\bar{K}_2+(K_2\cup K_2)$), 
 G6-40 ($\bar{K}_2+\bar{K}_4$) & Non-QE
\\ \hline
graph joins & G6-112 \rule[0pt]{0pt}{16pt} ($K_3+K_3$), G6-104 ($K_3+\bar{K}_3$) & QE 
\\ \cline{2-3}
($3+3$) & G6-73 \rule[0pt]{0pt}{16pt} ($\bar{K}_3+\bar{K}_3$) & Non-QE
\\ \hline
\end{tabular}
\end{center}

Note that the three non-QE graphs in the above table appear
already in the list of non-primary non-QE graphs,
see Subsection \ref{subsec:Non-Primary Non-QE Graphs}.

%%%%%%%%%%%%%%%%%%%
\subsection{Explicit Construction of Quadratic Embeddings}

\subsubsection{Graphs with pendant edges}

A graph $G=(V,E)$ is called 
\textit{with a pendant edge} if there exist 
four distinct vertices
$a,b,a^\prime, b^\prime\in V$ satisfying
\[
a\sim a^\prime \sim b^\prime \sim b \sim a,
\qquad
\deg(a^\prime)=\deg(b^\prime)=2.
\]
The edge $\{a^\prime, b^\prime\}$ is called a
\textit{pendant edge}.
We first note that the induced subgraph
spanned by $a,b,a^\prime$ and $b^\prime$ forms a cycle $C_4$.
Since this cycle is isometrically embedded in $G$, we have
$\mathrm{QEC}(G)\ge \mathrm{QEC}(C_4)=0$.
On the other hand, the induced subgraph spanned
by $V\backslash\{a^\prime,b^\prime\}$,
denoted by $H$, is also isometrically embedded in $G$.
Hence $\mathrm{QEC}(G)\ge \mathrm{QEC}(H)$ and we have
\begin{equation}\label{03eqn:comparison}
\max\{0,\mathrm{QEC}(H)\}
\le \mathrm{QEC}(G).
\end{equation}

\begin{proposition}[Graph with pendant edge]
\label{03prop:adding a square}
Let $G=(V,E)$ be a graph with a pendant edge 
$a^\prime\sim b^\prime$.
Let $H$ be the induced subgraph spanned 
by $V\backslash\{a^\prime, b^\prime\}$.
If $H$ is of QE class, so is $G$ and
$\mathrm{QEC}(G)=0$.
\end{proposition}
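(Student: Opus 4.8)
The plan is to construct an explicit quadratic embedding of $G$ from a given quadratic embedding of $H$ together with the standard embedding of the square $C_4$, gluing them along the shared vertices $a,b$. Since $H$ is of QE class, fix a quadratic embedding $\psi:V\backslash\{a^\prime,b^\prime\}\to\mathbb{R}^{N}$, so that $\|\psi(x)-\psi(y)\|^2=d_H(x,y)=d_G(x,y)$ for all $x,y\in V\backslash\{a^\prime,b^\prime\}$; the last equality uses that $H$ is isometrically embedded in $G$. The idea is then to place $\varphi(x)=\bigl(\psi(x),0,0\bigr)\in\mathbb{R}^{N}\times\mathbb{R}^2$ for $x\in V\backslash\{a^\prime,b^\prime\}$, and to set $\varphi(a^\prime)$ and $\varphi(b^\prime)$ using the two extra coordinates so that the square $a\sim a^\prime\sim b^\prime\sim b\sim a$ is realized isometrically. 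Concretely, taking $t=\tfrac12(\psi(a)+\psi(b))$ as the midpoint, one can put $\varphi(a^\prime)=\bigl(\psi(a),s,u\bigr)$ and $\varphi(b^\prime)=\bigl(\psi(b),s,-u\bigr)$ (or a suitable variant) and solve for $s,u$ from the three edge-length conditions $\|\varphi(a^\prime)-\varphi(a)\|^2=1$, $\|\varphi(b^\prime)-\varphi(b)\|^2=1$, $\|\varphi(a^\prime)-\varphi(b^\prime)\|^2=1$; since $d_G(a,b)=2$ gives $\|\psi(a)-\psi(b)\|^2=2$, these are consistent and yield real solutions.

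The key verification step is that every pairwise squared distance involving $a^\prime$ or $b^\prime$ comes out equal to the graph distance in $G$. There are three cases to check. First, the three edges of the pendant square, handled by the choice of $s,u$ above. Second, distances $d_G(a^\prime,x)$ and $d_G(b^\prime,x)$ for $x\in V\backslash\{a,b,a^\prime,b^\prime\}$: here one argues that any shortest path in $G$ from $a^\prime$ (resp. $b^\prime$) to such an $x$ must pass through $a$ or $b$ (because $\deg(a^\prime)=\deg(b^\prime)=2$ and their only neighbors are among $\{a,b,a^\prime,b^\prime\}$), so $d_G(a^\prime,x)=1+\min\{d_G(a,x),d_G(b^\prime,x)\}$ and similarly, and one untangles this to get $d_G(a^\prime,x)\in\{d_G(a,x)+1\}$ type formulas; then one checks the embedding reproduces these, using that the first $N$ coordinates of $\varphi(a^\prime)$ agree with $\psi(a)$. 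Third, $d_G(a^\prime,b^\prime)$ itself, which is $1$ and is handled in case one. Once all squared distances match, $\varphi$ is a quadratic embedding, so $G$ is of QE class, hence $\mathrm{QEC}(G)\le0$; combined with \eqref{03eqn:comparison}, which gives $\mathrm{QEC}(G)\ge0$, we conclude $\mathrm{QEC}(G)=0$.

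The main obstacle I anticipate is the second case above: showing that the embedded squared distance $\|\varphi(a^\prime)-\varphi(x)\|^2$ equals $d_G(a^\prime,x)$ for all $x$ in the $H$-part. This reduces to a clean identity once one knows the precise relation between $d_G(a^\prime,x)$, $d_G(a,x)$ and $d_G(b,x)$, which in turn follows from the degree-$2$ hypothesis on $a^\prime,b^\prime$; the bookkeeping is routine but must be done carefully, paying attention to whether a shortest path from $x$ to $a^\prime$ enters the square through $a$ or through $b^\prime$ (and then through $b$). A convenient way to organize this is to note $d_G(a^\prime,x)=\min\{d_G(a,x),d_G(b,x)+2\}$ and symmetrically, and then verify that the extra two coordinates $s,u$ were chosen so that $\|\varphi(a^\prime)-\varphi(x)\|^2 = \|\psi(a)-\psi(x)\|^2 + s^2 + u^2$ matches this — which will force a specific (and, one checks, feasible) choice of $s^2+u^2$, determined by the requirement at the nearest vertex. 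An alternative, possibly slicker route is to invoke a known QE-stability result for attaching a $C_4$ along an edge (a pendant square), but writing out the explicit map as above is self-contained and fits the section's theme of \emph{explicit construction of quadratic embeddings}.
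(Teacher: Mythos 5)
Your overall strategy---extend a quadratic embedding $\psi$ of $H$ by two extra coordinates, place $a^\prime,b^\prime$ above $\psi(a),\psi(b)$, verify all squared distances, and combine the resulting bound $\mathrm{QEC}(G)\le 0$ with \eqref{03eqn:comparison}---is exactly the paper's route. However, there is a concrete error at the solvability step. By the definition of a pendant edge the four vertices satisfy $a\sim a^\prime\sim b^\prime\sim b\sim a$, so $\{a,b\}$ is an \emph{edge} of $G$ and $d_G(a,b)=1$, not $2$ (the pairs at distance $2$ in the attached square are $(a,b^\prime)$ and $(a^\prime,b)$). With your ansatz $\varphi(a^\prime)=(\psi(a),s,u)$, $\varphi(b^\prime)=(\psi(b),s,-u)$, the three edge conditions read $s^2+u^2=1$ (twice) and $\|\psi(a)-\psi(b)\|^2+4u^2=1$. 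Under your stated premise $\|\psi(a)-\psi(b)\|^2=2$ the last equation has no real solution, so the claim that the system is ``consistent and yields real solutions'' fails as written; this is the step that would block the construction. With the correct value $\|\psi(a)-\psi(b)\|^2=d_G(a,b)=1$ you are forced to $u=0$, $s=\pm1$, and your map collapses to the paper's one-extra-coordinate embedding $\tilde{\varphi}(a^\prime)=(\psi(a),1)$, $\tilde{\varphi}(b^\prime)=(\psi(b),1)$ of \eqref{03def:quadratic embedding Lemma3.3}.

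Two smaller points in the same direction. Your ``convenient'' identity for case two should be $d_G(a^\prime,x)=\min\{d_G(a,x)+1,\,d_G(b,x)+2\}$ (the first term is off by one in your write-up), and since $a\sim b$ this minimum is always $d_G(a,x)+1$; with that, the verification is precisely \eqref{03eqn:in proof (1)}--\eqref{03eqn:in proof (2)}: $\|\tilde{\varphi}(a^\prime)-\tilde{\varphi}(x)\|^2=\|\psi(a)-\psi(x)\|^2+1=d_G(a,x)+1=d_G(a^\prime,x)$, and symmetrically for $b^\prime$. Also, you invoke (as the paper does) that $H$ is isometrically embedded in $G$; it is worth one line of justification: any $G$-geodesic between vertices of $H$ that used $a^\prime,b^\prime$ would traverse $a\,a^\prime\,b^\prime\,b$ and can be shortcut through the edge $\{a,b\}$. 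None of this requires a new idea, but the assertion $d_G(a,b)=2$ must be corrected for the proof to stand.
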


\begin{proof}
Let $d(x,y)$ denote the graph distance of $G$.
As is mentioned above, $d(x,y)$ coincides
with the graph distance of $H$.
By assumption we take a quadratic embedding of $H$, say,
$\varphi: V\backslash\{a^\prime, b^\prime\}
\rightarrow \mathbb{R}^N$.
Then we have
\[
\|\varphi(x)-\varphi(y)\|^2=d(x,y),
\qquad x,y\in V\backslash\{a^\prime, b^\prime\}.
\]
In view of the natural inclusion $\mathbb{R}^N\subset
\mathbb{R}^{N+1}$, we define 
a map $\Tilde{\varphi}:V\rightarrow \mathbb{R}^{N+1}$ by
\begin{equation}\label{03def:quadratic embedding Lemma3.3}
\Tilde{\varphi}(x)=
\begin{cases} 
(\varphi(x),0), & x\neq a^\prime,b^\prime, \\
(\varphi(a),1), & x=a^\prime, \\
(\varphi(b),1), & x=b^\prime.
\end{cases}
\end{equation}
We will prove that $\Tilde{\varphi}$ gives rise to a 
quadratic embedding of $G$
by showing that
\begin{equation}\label{03:adding square quadratic embedding}
\|\Tilde{\varphi}(x)-\Tilde{\varphi}(y)\|^2=d(x,y),
\qquad x,y\in V.
\end{equation}
In fact, for $x,y\in V\backslash\{a^\prime,b^\prime\}$ we see from 
\eqref{03def:quadratic embedding Lemma3.3} that
\[
d(x,y)=\|\varphi(x)-\varphi(y)\|^2
=\|\Tilde{\varphi}(x)-\Tilde{\varphi}(y)\|^2.
\]
For $x\in V$ and $y=a^\prime$ we have
\begin{equation}\label{03eqn:in proof (1)}
d(x,a^\prime)=d(x,a)+1
\end{equation}
and 
\begin{equation}\label{03eqn:in proof (2)}
\|\Tilde{\varphi}(x)-\Tilde{\varphi}(a^\prime)\|^2
=\|\varphi(x)-\varphi(a)\|^2+1^2.
\end{equation}
Since $x,a\in V\backslash\{a^\prime,b^\prime\}$ we have
$d(x,a)=\|\varphi(x)-\varphi(a)\|^2$,
combining \eqref{03eqn:in proof (1)}
and \eqref{03eqn:in proof (2)} we come to
\[
d(x,a^\prime)=d(x,a)+1
=\|\varphi(x)-\varphi(a)\|^2+1^2
=\|\Tilde{\varphi}(x)-\Tilde{\varphi}(a^\prime)\|^2.
\]
Similarly, \eqref{03:adding square quadratic embedding}
is shown to be valid for $x\in V$, $y=b^\prime$
and $x=a^\prime$, $y=b^\prime$.
Thus, $\Tilde{\varphi}$ is a quadratic embedding of $G$
and $\mathrm{QEC}(G)\le0$.
Finally, applying \eqref{03eqn:comparison},
we conclude that $\mathrm{QEC}(G)=0$.
\end{proof}

Among the graphs on six vertices the graphs with pendant edges
are easily specified.
They are
G6-15,
G6-18,
G6-26,
G6-34,
G6-35,
G6-46,
G6-52 and G6-75.
Those graphs are of QE class
since all graphs on four vertices are of QE class.

\subsubsection{Extension of a quadratic embedding of a smaller graph}

Let $G$ be a graph on six vertices and $H$ an induced subgraph of $G$
which is isometrically embedded in $G$.
Assume that $H$ is of QE class and given explicitly 
a quadratic embedding in a Euclidean space.
Then we may seek a quadratic embedding of $G$
by extending the given one.
This strategy works well particularly
when $H$ contains a cycle $C_4$ as a (not necessarily induced) subgraph.
Taking G6-79 as an example, we will illustrate the procedure.
\begin{figure}[hbt]
\begin{center}
\includegraphics[width=60pt]{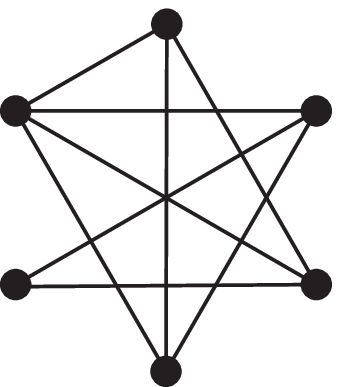}
\qquad \raisebox{20pt}{$\cong$} \qquad
\includegraphics[width=68pt]{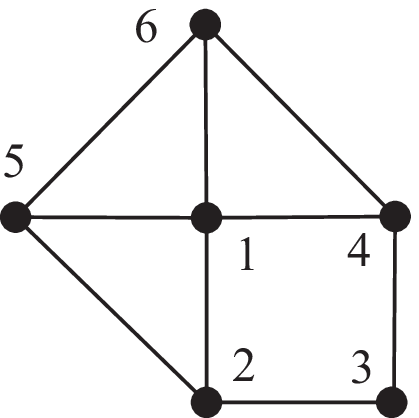}
\end{center}
\caption{G6-79}
\label{explicit quadratic embedding}
\end{figure}

Let $G$ be the graph G6-79 realized on $V=\{1,2,\dots,6\}$
as in Figure \ref{explicit quadratic embedding}.
The induced subgraph $H$ spanned by $\{1,2,3,4\}$ is $C_4$
and isometrically embedded in $G$.
Let $\bm{e}_1,\bm{e}_2,\dots$ be mutually orthogonal
unit vectors in a Euclidean space of sufficiently high dimension.
We note first that a quadratic embedding of $H$ is given explicitly by
\[
\varphi(1)=\bm{0},
\qquad
\varphi(2)=\bm{e}_1,
\qquad
\varphi(3)=\bm{e}_1+\bm{e}_2,
\qquad
\varphi(4)=\bm{e}_2.
\]
For a quadratic embedding of $G$ it is sufficient to find
two vectors in the Euclidean space
corresponding to the vertices 5 and 6,
which are denoted by $\bm{x}$ and $\bm{y}$, respectively.
Then the conditions on $\bm{x}$ and $\bm{y}$ are written down easily.
\begin{gather}
\|\bm{x}-\varphi(1)\|^2=\|\bm{x}-\varphi(2)\|^2=1,
\quad
\|\bm{x}-\varphi(3)\|^2=\|\bm{x}-\varphi(4)\|^2=2, 
\label{03eqn:position x} \\
\|\bm{y}-\varphi(1)\|^2=\|\bm{y}-\varphi(4)\|^2=1,
\quad
\|\bm{y}-\varphi(2)\|^2=\|\bm{y}-\varphi(3)\|^2=2,
\label{03eqn:position y} \\
\|\bm{x}-\bm{y}\|^2=1
\label{03eqn:position xy}
\end{gather}
It follows easily from \eqref{03eqn:position x} that
\[
\langle \bm{x},\bm{e}_1\rangle=\frac12,
\qquad
\langle \bm{x},\bm{e}_2\rangle=0.
\]
Then we set 
\begin{equation}\label{03eqn:x-prime}
\bm{x}
=\langle \bm{x},\bm{e}_1\rangle \bm{e}_1
+\langle \bm{x},\bm{e}_2\rangle \bm{e}_2+\bm{x}^\prime
=\frac{1}{2}\, \bm{e}_1+\bm{x}^\prime,
\end{equation}
where  
\begin{equation}\label{03eqn:x-prime 2}
\langle \bm{x}^\prime,\bm{e}_1\rangle
=\langle \bm{x}^\prime,\bm{e}_2\rangle=0,
\qquad
\frac14+\|\bm{x}^\prime\|^2=1.
\end{equation}
Similarly, we have
\begin{equation}\label{03eqn:y-prime}
\bm{y}
=\frac{1}{2}\, \bm{e}_2+\bm{y}^\prime,
\end{equation}
and
\begin{equation}\label{03eqn:y-prime 2}
\langle \bm{y}^\prime,\bm{e}_1\rangle
=\langle \bm{y}^\prime,\bm{e}_2\rangle=0,
\qquad
\frac14+\|\bm{y}^\prime\|^2=1.
\end{equation}
Inserting \eqref{03eqn:x-prime} and \eqref{03eqn:y-prime}
into \eqref{03eqn:position xy}, we obtain
\begin{equation}\label{02eqn:x^prime and y^prime}
\langle \bm{x}^\prime,\bm{y}^\prime\rangle=\frac12.
\end{equation}
We then see that two vectors $\bm{x}^\prime$ and $\bm{y}^\prime$
satisfying \eqref{03eqn:x-prime 2},
\eqref{03eqn:y-prime 2} and \eqref{02eqn:x^prime and y^prime}
certainly exist since
\[
-1\le 
\frac{\langle \bm{x}^\prime,\bm{y}^\prime\rangle}
     {\|\bm{x}^\prime\|\cdot \|\bm{y}^\prime\|}
=\frac23\le 1.
\]
With such $\bm{x}^\prime$ and $\bm{y}^\prime$ 
we define $\bm{x}$ and $\bm{y}$ by
\eqref{03eqn:x-prime} and \eqref{03eqn:y-prime},
respectively.
Then the quadratic embedding $\varphi$ of $H$ is extended to $G$
by setting $\varphi(5)=\bm{x}$ and $\varphi(6)=\bm{y}$.

\bigskip
The argument in this subsection is applied to the graphs mentioned
in the following table, of which the details are omitted.

\begin{center}
\renewcommand{\arraystretch}{1.2}
\begin{tabular}{|l|l|l|}
\hline
graphs & No. & QE/Non-QE \\
\hline
with pendant edges & G6-15, G6-18, G6-26, G6-34, & QE 
\\
 & G6-35, G6-46, G6-52, G6-75 &
\\ \hline
with an explicit & G6-50, G6-58, G6-63, G6-65, & QE
\\
quadratic embedding & G6-76, G6-79, G6-81, G6-82, & 
\\
& G6-87, G6-90, G6-93, G6-95, & 
\\
& G6-99, G6-100 &
\\
\hline
\end{tabular}
\end{center}

%%%%%%%%%%%%%%%%%%%%%%%%%%%%%%%%%%%%%%%%%%%%%%%
\section{Completion of Classification}

\subsection{Calculating QE Constants}

During the last section we have examined 105 graphs on six vertices
and seven graphs remain for checking,
which are G6-30, G6-32, G6-59, G6-60, G6-66, G6-84 and G6-106,
see Figures \ref{fig:remainders 1}--\ref{fig:remainders 4}.
\begin{figure}[hbt]
\begin{center}
\includegraphics[width=60pt]{6-30.eps}
\quad \raisebox{20pt}{$\cong$} \quad
\includegraphics[width=64pt]{6-30alt.eps}
\qquad
\includegraphics[width=60pt]{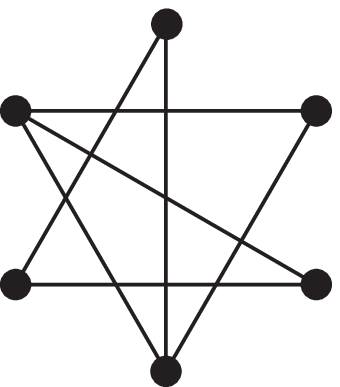}
\quad \raisebox{20pt}{$\cong$} \quad
\includegraphics[width=64pt]{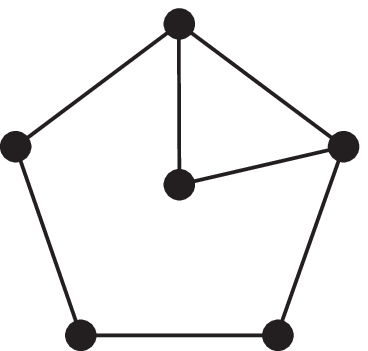}
\end{center}
\caption{$C_6\wedge K_{2,1}$: G6-30 (left) and G6-32 (right)}
\label{fig:remainders 1}
\end{figure}

\begin{figure}[hbt]
\begin{center}
\includegraphics[width=60pt]{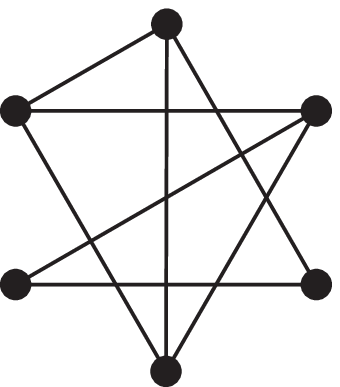}
\quad \raisebox{20pt}{$\cong$} \quad
\includegraphics[width=64pt]{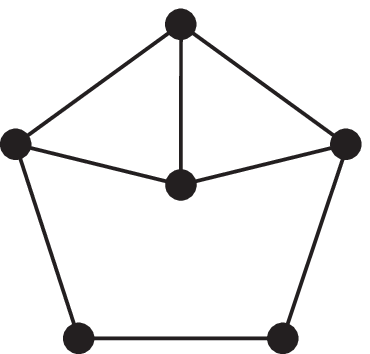}
\qquad
\includegraphics[width=60pt]{6-60.eps}
\quad \raisebox{20pt}{$\cong$} \quad
\includegraphics[width=64pt]{6-60alt.eps}
\end{center}
\caption{$C_6\wedge K_{3,1}$: G6-59 (left) and G6-60 (right)}
\label{fig:remainders 2}
\end{figure}

\begin{figure}[hbt]
\begin{center}
\includegraphics[width=60pt]{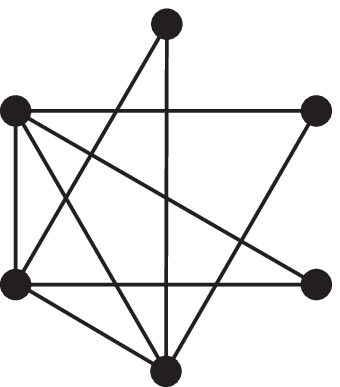}
\quad \raisebox{20pt}{$\cong$} \quad
\includegraphics[width=72pt]{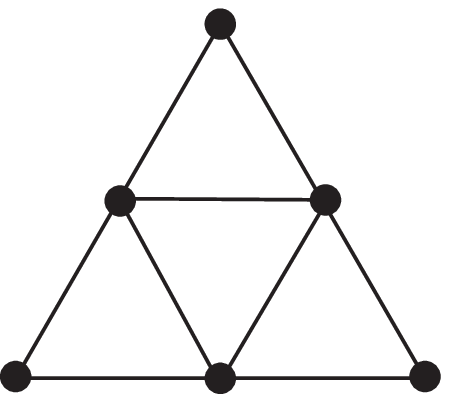}
\qquad
\includegraphics[width=60pt]{6-84.eps}
\quad \raisebox{20pt}{$\cong$} \quad
\includegraphics[width=68pt]{6-84alt.eps}
\end{center}
\caption{G6-66 (left) and G6-84 (right)}
\label{fig:remainders 3}
\end{figure}
\begin{figure}[hbt]
\begin{center}
\includegraphics[width=60pt]{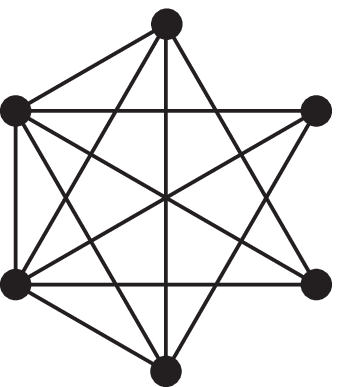}
\end{center}
\caption{G6-106 ($K_6\backslash P_4$)}
\label{fig:remainders 4}
\end{figure}

The QE constants are calculated in a standard manner
(Proposition \ref{02prop:QEC}).
For G6-30, G6-59 and G6-66 the calculation is reduced to 
a quadratic equation and we obtain 
\begin{align*}
\mathrm{QEC}(\text{G6-30})
&=\frac{-4+\sqrt{19}}{3}
\approx 0.1196..., \\
\mathrm{QEC}(\text{G6-59})
&=\frac{-5+\sqrt{19}}{3}
\approx -0.2137..., \\
\mathrm{QEC}(\text{G6-66})
&=\frac{-3+\sqrt{5}}{2}
\approx -0.3819....
\end{align*}
For G6-32, G6-60 and G6-84 the QE constants are solutions to
algebraic equations of higher degrees.
We have
\begin{align*}
\mathrm{QEC}(\text{G6-32})&=\lambda^*_1\approx -0.3121..., \\
\mathrm{QEC}(\text{G6-60})&=\lambda^*_2\approx 0.2034..., \\
\mathrm{QEC}(\text{G6-84})&=\lambda^*_3\approx 0.1313...,
\end{align*}
where $\lambda_1^*,\lambda_2^*$ and $\lambda_3^*$ 
are the largest roots of the equations
\begin{align*}
&3\lambda^3+15\lambda^2+14\lambda+3=0, \\
& 5\lambda^3+26\lambda^2+24\lambda-6=0, \\
&3\lambda^4+14\lambda^3+18\lambda^2+5\lambda-1=0,
\end{align*}
respectively.
Finally, we see that G6-106 is obtained from $K_6$
by deleting three edges which form a path $P_4$,
that is, $\text{G-106}= K_6\backslash P_4$.
Applying the general formula in Appendix B, we obtain
\[
\mathrm{QEC}(\text{G6-106})
=\frac{-3+\sqrt{5}}{2}
\approx -0.3819....
\]

\begin{remark}
\normalfont
Apparently, graphs G6-30, G6-32, G6-59 and G6-60 share 
a common structure, namely, they are obtained by
joining the cycle $C_5$ with the end-vertices of a star $K_{m,1}$.
Although not determined uniquely,
such a graph is denoted by $C_5\wedge K_{m,1}$ for simplicity.
Up to now no useful formula is known
for the QE constants of graphs of this type.
\end{remark}

The results of this subsection is summarized in the following table,
where all graphs $C_5\wedge K_{m,1}$ with $1\le m\le 5$ are
listed for completeness.

\begin{center}
\renewcommand{\arraystretch}{1.2}
\begin{tabular}{|l|l|l|}
\hline
graphs & No. & QE/Non-QE \\
\hline
$C_5\wedge K_{m,1}$  
 & G6-30 ($C_5\wedge K_{2,1}$), G6-60 ($C_5\wedge K_{3,1}$) & Non-QE 
\\ \cline{2-3}
 & G6-16 ($C_5\wedge K_{1,1}$), G6-32 ($C_5\wedge K_{2,1}$), & QE
\\
 & G6-59 ($C_5\wedge K_{3,1}$), G6-79 ($C_5\wedge K_{4,1}$), &
\\
 & G6-92 ($C_5\wedge K_{5,1}$) &
\\ \hline
Remainders &  G6-84 & Non-QE
\\ \cline{2-3}
 &  G6-66, G6-106 ($K_6\backslash P_3$)  & QE
\\ \hline
\end{tabular}
\end{center}

\subsection{Conclusion}

During the last section
all non-primary non-QE graphs on six vertices are found
(Subsection \ref{subsec:Non-Primary Non-QE Graphs})
and no primary non-QE graphs are found.
Hence the three graphs with positive QE constants
found in the last subsection
exhaust the primary non-QE graphs on six vertices,
as is mentioned in Theorem \ref{01thm:primary non-QE graphs}.
The proof of Theorem \ref{01thm:non-primary non-QE graphs}
is already completed 
in Subsection \ref{subsec:Non-Primary Non-QE Graphs}.
Then the assertion of Theorem \ref{01thm:QE graphs} is obvious.

Together with the two primary non-QE graphs on five vertices
(Theorem \ref{02thm:G5-10 and 17})
we have thus determined five primary non-QE graphs 
among the graphs on $n\le 6$ vertices.
It is a natural desire to explore new primary non-QE graphs
on seven or more vertices.
However, this question seems to be very difficult
because of lack of constructive approach.
In fact, our main argument to reach 
a primary non-QE graph on six vertices looks 
like the Eratosthenes' sieve for prime numbers.

In the end of this paper we mention three examples
of primary non-QE graphs on seven vertices.
They are found as a byproduct of the formula 
for the QE constant of the complete multipartite graphs
(Proposition \ref{03prop:complete multipartite graphs}).

\begin{proposition}[\cite{Obata2022}]
\label{01thm:CMG of non-QE}
Among the complete multipartite graphs 
$K_{m_1,m_2,\dots,m_k}$ with $k\ge2$ and $m_1\ge m_2\ge \dotsb \ge m_k\ge1$
there are exactly four primary non-QE graphs,
that are $K_{3,2}\,, K_{5,1,1}\,, K_{4,1,1,1}$ and $K_{3,1,1,1,1}$.
Moreover, any complete multipartite graph of non-QE class
contains at least one of the above four primary ones
as an isometrically embedded subgraph.
\end{proposition}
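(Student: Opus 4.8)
\medskip
\noindent\textbf{Proof strategy.}\enspace
The plan is to reduce everything to the explicit formula for $\mathrm{QEC}(K_{m_1,\dots,m_k})$ in Proposition~\ref{03prop:complete multipartite graphs}, using that every complete multipartite graph with $k\ge2$ parts has diameter at most two. By Lemma~\ref{02lem:isometric embedding} any connected induced subgraph of such a graph --- which is again complete multipartite --- is then automatically isometrically embedded, and by Lemma~\ref{02lem:isometrically embedded subgraphs} its QE constant is at most that of the ambient graph. So if a non-QE complete multipartite graph $G$ contained a non-QE proper subgraph $H$, then $H$ is induced (Lemma~\ref{02lem:isometric embedding}) and, deleting from $G$ a single vertex lying outside $H$, one obtains a subgraph $G^\prime$ which is again complete multipartite with at least two parts --- this last point is readily checked from the classification below --- hence isometrically embedded in $G$, and $\mathrm{QEC}(G^\prime)\ge\mathrm{QEC}(H)>0$. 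Consequently a non-QE complete multipartite graph is \emph{primary} if and only if deleting any one of its vertices produces a graph of QE class, and this is the working criterion.

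\medskip
First I would classify the non-QE complete multipartite graphs. By Proposition~\ref{03prop:complete multipartite graphs} there are three cases. If $m_1=m_2$, then $\mathrm{QEC}=m_1-2$, which is positive exactly when $m_1\ge3$. If $m_1>m_2\ge2$, then $-m_1<\alpha^*<-m_2\le-2$, so $\mathrm{QEC}=-2-\alpha^*>0$ always. Finally, if $m_2=1$, i.e.\ $G=K_{m_1,1,\dots,1}$ with $k-1$ singleton parts, the defining equation is linear in $\alpha$ and yields $\alpha^*=-m_1k/(m_1+k-1)$, so that
\[
\mathrm{QEC}(K_{m_1,1,\dots,1})
=-2+\frac{m_1k}{m_1+k-1}
=\frac{(m_1-2)(k-2)-2}{\,m_1+k-1\,}\,,
\]
which is positive exactly when $(m_1-2)(k-2)>2$; in particular this forces $m_1\ge3$ and $k\ge3$. (The boundary cases $m_1=1$, $m_1=m_2=2$ and $k=2$ all give QE graphs, consistent with the above.)

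\medskip
The remaining work is bookkeeping. For the ``primary'' direction I would check from these formulas that $K_{3,2}$, $K_{5,1,1}$, $K_{4,1,1,1}$ and $K_{3,1,1,1,1}$ are non-QE, with QE constants $\tfrac25$, $\tfrac17$, $\tfrac27$ and $\tfrac17$, while deleting a single vertex from each produces one of $K_{2,2}$, $K_{3,1}$, $K_{4,1,1}$, $K_{5,1}$, $K_{3,1,1,1}$ or $K_{2,1,1,1,1}$, all of QE class; by the criterion above the four graphs are therefore primary. For the converse --- which also gives the embedding assertion --- I would verify that every non-QE complete multipartite graph contains one of the four as an induced (hence isometrically embedded) subgraph: if $m_1\ge3$ and $m_2\ge2$, three vertices of the largest part together with two of the second part span $K_{3,2}$; while for $G=K_{m_1,1,\dots,1}$ with $(m_1-2)(k-2)>2$ one has $k=3$ with $m_1\ge5$ (shrink the large part to size five to span $K_{5,1,1}$), or $k=4$ with $m_1\ge4$ (span $K_{4,1,1,1}$), or $k\ge5$ with $m_1\ge3$ (three vertices of the large part and four singleton parts span $K_{3,1,1,1,1}$). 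This shows the list $K_{3,2},K_{5,1,1},K_{4,1,1,1},K_{3,1,1,1,1}$ is complete and irredundant.

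\medskip
The main obstacle is purely organizational: one must match each admissible pair $(m_1,k)$ with $(m_1-2)(k-2)>2$ to the correct ``singleton-type'' primary graph and confirm that the four candidates are pairwise incomparable under isometric embedding --- for instance $K_{3,2}$ never sits inside any $K_{m_1,1,\dots,1}$, which has only one part of size $\ge2$, and $K_{5,1,1}$, $K_{4,1,1,1}$, $K_{3,1,1,1,1}$ are separated by their numbers of parts and largest part sizes. Everything else is a direct evaluation of the rational expression for $\mathrm{QEC}$.
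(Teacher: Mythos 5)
Your argument is correct. Note that the paper itself does not prove this proposition --- it is imported from the reference [Obata2022] together with the QE-constant formula of Proposition \ref{03prop:complete multipartite graphs} --- so what you have written is a self-contained derivation of the cited result from that formula, along what is essentially the expected (and presumably the cited paper's) route: classify the non-QE complete multipartite graphs by the sign of $\mathrm{QEC}$ (namely $m_1=m_2\ge3$; $m_1>m_2\ge2$; or $m_2=1$ with $(m_1-2)(k-2)>2$), observe that connected induced subgraphs are again complete multipartite of diameter at most two and hence isometrically embedded, reduce primarity to one-vertex deletions, and then do the finite bookkeeping. Your key reduction (primary $\Leftrightarrow$ every one-vertex deletion is of QE class) is justified exactly as you indicate via Lemmas \ref{02lem:isometric embedding} and \ref{02lem:isometrically embedded subgraphs}, and the numerical checks ($\mathrm{QEC}=\frac25,\frac17,\frac27,\frac17$ for the four graphs, and $\mathrm{QEC}\le0$ for all their deletions) are accurate, so I see no gap.
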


In other words, $K_{5,1,1}$, $K_{4,1,1,1}$ and $K_{3,1,1,1,1}$
are primary non-QE graphs on seven vertices.
For more details see \cite{Obata2022}.

%%%%%%%%%%%%%%%%%%%%%%%%%%%%%%%%%%%%%%%%%%%%%%%%%%%%%%%%
\setcounter{section}{1}
\section*{Appendix A: Calculating $\mathrm{QEC}(K_n\wedge K_{m,1}$)}
\setcounter{equation}{0}
\setcounter{theorem}{0}
\renewcommand{\thesection}{\Alph{section}}
\renewcommand{\theequation}{\Alph{section}.\arabic{equation}}

For $1\le m\le n$
the graph $K_n\wedge K_{m,1}$ is obtained 
by putting $m$ vertices of the complete graph $K_n$
with the $m$ end-vertices of the star $K_{m,1}$ together.
We will prove the following formula.

\begin{proposition}
For $1\le m\le n$ we have
\[
\mathrm{QEC}(K_n\wedge K_{m,1})
=\frac{-2n+m-1+\sqrt{n(n-m)(m+1)}}{n+1}\,.
\]
\end{proposition}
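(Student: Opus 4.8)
The plan is to compute $\mathrm{QEC}(K_n\wedge K_{m,1})$ directly via Lagrange multipliers (Proposition \ref{02prop:QEC}), exploiting the very small number of vertex-orbits under the automorphism group. Label the vertices as follows: let $c$ be the center of the star $K_{m,1}$; let $A=\{a_1,\dots,a_m\}$ be the $m$ identified vertices (these are simultaneously the end-vertices of the star and $m$ of the vertices of $K_n$); and let $B=\{b_1,\dots,b_{n-m}\}$ be the remaining vertices of $K_n$. The graph distances are then: $d(c,a_i)=1$, $d(c,b_j)=2$, $d(a_i,a_{i'})=1$, $d(b_j,b_{j'})=1$, $d(a_i,b_j)=1$, and of course the diagonal is $0$. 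So the distance matrix $D$ is, up to the ordering $(c\,|\,A\,|\,B)$, a block matrix whose blocks are multiples of all-ones matrices and identities.

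First I would reduce the eigenvalue/stationary-point problem using the block structure. The automorphism group acts transitively on $A$ and on $B$ separately and fixes $c$, so on the space $\{f:\langle\1,f\rangle=0\}$ the quadratic form $\langle f,Df\rangle$ decomposes into (i) a $3$-dimensional ``symmetric'' subspace where $f$ is constant on $A$, constant on $B$, takes some value at $c$, subject to the mean-zero constraint — effectively a $2$-dimensional problem after imposing $\langle\1,f\rangle=0$; and (ii) the orthogonal complement inside each orbit, where $f$ sums to zero on $A$ (resp. on $B$) and vanishes elsewhere. On the second type of subspace $D$ acts as a scalar: a vector supported on $A$ with zero sum is an eigenvector of $D$ with eigenvalue $-1$ (since the $A\times A$ block is $J-I$ restricted there), and similarly on $B$ with eigenvalue $-1$. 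These contribute only $\lambda=-1\le\mathrm{QEC}$, so they are never the maximum (consistent with Proposition \ref{02prop:QEC ge -1}, and indeed the claimed value exceeds $-1$). Hence $\mathrm{QEC}$ is attained on the symmetric subspace.

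Next I would carry out the explicit optimization on the symmetric subspace. Write $f(c)=s$, $f\equiv u$ on $A$, $f\equiv v$ on $B$; the constraint $\langle\1,f\rangle=0$ reads $s+mu+(n-m)v=0$, and normalization is $s^2+mu^2+(n-m)v^2=1$. A short computation gives $\langle f,Df\rangle$ as a quadratic form in $(s,u,v)$: the $A\times A$ block contributes $m(m-1)u^2$-type terms, etc. Maximizing this quadratic form on the ellipse cut out by the two constraints is a generalized eigenvalue problem for a $3\times 3$ (effectively $2\times 2$) matrix pencil; its characteristic equation is a quadratic in $\lambda$. I expect that equation to simplify to one whose larger root is exactly $\dfrac{-2n+m-1+\sqrt{n(n-m)(m+1)}}{n+1}$. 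The main obstacle — really the only non-routine part — is pushing this $2\times2$ determinant computation through cleanly and verifying the discriminant collapses to $n(n-m)(m+1)$ up to the right factor; it helps to eliminate $s$ first using the mean-zero constraint, reducing to a genuinely $2\times2$ symmetric generalized eigenproblem in $(u,v)$, and to sanity-check against the boundary cases $m=1$ (where $K_n\wedge K_{1,1}=K_n\star K_2$) and $m=n$ (where $K_n\wedge K_{n,1}=K_{n+1}$, forcing the value $-1$, which indeed makes the radicand vanish and gives $(-2n+n-1)/(n+1)=-1$). These checks both confirm the formula and guard against sign errors in the quadratic.
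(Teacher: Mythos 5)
Your proposal is correct and follows essentially the same route as the paper: use the block structure of $D$ to reduce to functions constant on the two orbits $A$ and $B$, then solve the resulting two-variable constrained maximization, whose characteristic equation is exactly the paper's quadratic $(n+1)\lambda^2+2(2n-m+1)\lambda+m^2-mn+3n-2m+1=0$ with larger root the stated value. The only real difference is how the reduction is justified — you use the orbit decomposition of $\1^\perp$, noting that zero-sum vectors supported on $A$ or on $B$ are $(-1)$-eigenvectors of $D$, while the paper reads the constancy of the components off the Lagrange stationarity equations under $\lambda>-1$ — and your boundary checks $m=1$ and $m=n$ correspond to the paper's separate verification of the case $m=n$.
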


We adopt a realization of $K_n\wedge K_{m,1}$
on the vertex set $\{0,1,2,\dots,n\}$.
Consider the complete graph $K_n=(V,E)$,
where $V=\{1,2,\dots,n\}$
and $E=\{\{x,y\}\,;\, x,y \in V, x\neq y\}$.
Then $K_n\wedge K_{m,1}=(\Tilde{V},\Tilde{E})$ is given by
\[
\Tilde{V}=V\cup\{0\}=\{0,1,2,\dots,n\},
\qquad
\Tilde{E}=E\cup\{\{0,x\}\,;\, x=1,2,\dots,m\}.
\]
The vertex set of $K_n\wedge K_{m,1}$ being
divided into three parts $\{0\}$,
$\{1,2,\dots,m\}$ and $\{m+1,\dots,n\}$,
the distance matrix of $K_n\wedge K_{m,1}$ admits
a natural block matrix form:
\[
D=
\begin{bmatrix}
0 & \1 & 2\1 \\
\1 & J-I & J \\
2\1 & J & J-I
\end{bmatrix},
\]
where $\1$ is a column or row vector with all the entries are one,
$J$ the matrix with all entries being one,
and $I$ the identity matrix,
and their sizes are understood in the context.

Accordingly, the quadratic form associated to $D$ is written as
\[
\left\langle \begin{bmatrix} f \\ g \\ h \end{bmatrix},
D\begin{bmatrix} f \\ g \\ h \end{bmatrix} \right\rangle,
\qquad
f\in\mathbb{R}, \quad
g=\begin{bmatrix} g_1 \\ \vdots \\ g_m \end{bmatrix}
\in\mathbb{R}^m, \quad
h=\begin{bmatrix} h_1 \\ \vdots \\ h_{n-m} \end{bmatrix}
\in\mathbb{R}^{n-m}.
\]
Upon employing the method of Lagrange's multiplier
we define
\begin{align*}
\varphi(f,g,h,\lambda,\mu)
&=\left\langle \begin{bmatrix} f \\ g \\ h \end{bmatrix},
D\begin{bmatrix} f \\ g \\ h \end{bmatrix} \right\rangle \\[4pt]
&\qquad
-\lambda(f^2+\langle g,g \rangle+\langle h,h \rangle-1)
-\mu(f+\langle \1,g \rangle+\langle \1,h \rangle).
\end{align*}
After simple calculus we see that
any stationary point of $\varphi(f,g,h,\lambda,\mu)$ 
is obtained as a solution to the following system of equations:
\begin{align}
\frac{\partial\varphi}{\partial f}
&=-2\lambda f +2\langle\1,g\rangle
 +4\langle\1,h\rangle-\mu=0,
\label{aeqn:01} \\
\frac{\partial\varphi}{\partial g_i}
&=2f+2\langle\1,g\rangle
 +2\langle\1,h\rangle
 -2(\lambda+1)g_i-\mu=0,
\label{aeqn:02} \\
\frac{\partial\varphi}{\partial h_j}
&=4f+2\langle\1,g\rangle
 +2\langle\1,h\rangle
 -2(\lambda+1)h_j-\mu=0,
\label{aeqn:03}
\end{align}
under the two constraints:
\begin{align}
\frac{\partial\varphi}{\partial \lambda}
&=f^2+\langle g,g\rangle +\langle h,h\rangle-1=0,
\label{aeqn:04} \\
\frac{\partial\varphi}{\partial \mu}
&=f+\langle \1,g\rangle +\langle \1,h\rangle=0.
\label{aeqn:05}
\end{align}
Let $\mathcal{S}$ be the set of all solutions to
\eqref{aeqn:01}--\eqref{aeqn:05}.

Hereafter we assume that $1\le m<n$.
Since $\mathrm{QEC}(K_n\wedge K_{m,1})>-1$
by Proposition \ref{02prop:QEC ge -1},
it is sufficient to
seek out all solutions $(f,g,h,\lambda,\mu)\in\mathcal{S}$ with $\lambda>-1$.
It is obvious from \eqref{aeqn:02} and \eqref{aeqn:03} that
$g_i$ and $h_j$ are constant. We set
\[
g_i=\gamma, \quad 1\le i\le m;
\qquad
h_j=\delta, \quad 1\le j\le n-m.
\]
After simple algebra the equations 
\eqref{aeqn:01}--\eqref{aeqn:05} are reduced to the following
\begin{align}
&-\lambda f +m\gamma+2(n-m)\delta=\frac{\mu}{2}\,,
\label{aeqn:11} \\
&f+(m-1-\lambda)\gamma+(n-m)\delta=\frac{\mu}{2}\,,
\label{aeqn:12} \\
&2f+m\gamma+(n-m-1-\lambda)\delta=\frac{\mu}{2}\,,
\label{aeqn:13} \\
&f^2+m\gamma^2+(n-m)\delta^2=1
\label{aeqn:14} \\
&f+m\gamma+(n-m)\delta=0.
\label{aeqn:15}
\end{align}
From \eqref{aeqn:15} we obtain
\begin{equation}\label{aeqn:25}
f=-m\gamma-(n-m)\delta.
\end{equation}
Then \eqref{aeqn:11}--\eqref{aeqn:13} become
\begin{align}
&m(\lambda+1)\gamma+(n-m)(\lambda+2)\delta=\frac{\mu}{2}\,,
\label{aeqn:21} \\
&-(\lambda+1)\gamma =\frac{\mu}{2}\,,
\label{aeqn:22} \\
&-m\gamma+(-n+m-1-\lambda)\delta=\frac{\mu}{2}\,,
\label{aeqn:23}
\end{align}
respectively.
Set $\mu=0$ in \eqref{aeqn:21}--\eqref{aeqn:23}.
From our condition $1\le m<n$ and $\lambda>-1$ we
see immediately that $\gamma=\delta=0$,
and hence $f=0$ by \eqref{aeqn:25}.
But $f=\gamma=\delta=0$ does not fulfill \eqref{aeqn:14}.
Thus, $\mu\neq0$.
Coming back to \eqref{aeqn:22}, we have
\[
\gamma=\frac{-1}{\lambda+1}\,\frac{\mu}{2}\,.
\]
Then \eqref{aeqn:21} and \eqref{aeqn:23} become
\begin{align}
&(n-m)(\lambda+2)\delta=(m+1)\frac{\mu}{2}\,,
\label{aeqn:31} \\
&(\lambda+n-m+1)\delta=\bigg(\frac{m}{\lambda+1}-1\bigg)\frac{\mu}{2}\,,
\label{aeqn:33}
\end{align}
respectively.
Hence
\[
(\lambda+n-m+1)(m+1)\frac{\mu}{2}
-(n-m)(\lambda+2)\bigg(\frac{m}{\lambda+1}-1\bigg)\frac{\mu}{2}=0.
\]
After simple algebra with $\mu\neq0$ we obtain
\[
(n+1)\lambda^2+2(2n-m+1)\lambda
+m^2-mn+3n-2m+1=0.
\]
The above quadratic equation possesses two real roots given by
\[
\lambda_{\pm}
=\frac{-(2n-m+1)\pm\sqrt{n(n-m)(m+1)}}{n+1}\,.
\]
For $\lambda=\lambda_{\pm}$ we may determine 
$f,\gamma$ and $\delta$ in such a way that
\eqref{aeqn:21}--\eqref{aeqn:23} and \eqref{aeqn:14}
are satisfied.
Thus, we conclude that $\lambda_+$ is the
maximum of $\lambda$ appearing in $\mathcal{S}$.
Namely, 
\begin{equation}\label{aeqn:QEC(Kn wedge Km1)}
\mathrm{QEC}(K_n\wedge K_{m,1})
=\frac{-(2n-m+1)+\sqrt{n(n-m)(m+1)}}{n+1}\,,
\end{equation}
as desired.

For $m=n$ we have $\mathrm{QEC}(K_n\wedge K_{m,1})=\mathrm{QEC}(K_{n+1})=-1$.
On the other hand, the value of the right-hand side of
\eqref{aeqn:QEC(Kn wedge Km1)} for $m=n$ is $-1$.
Consequently, the formula \eqref{aeqn:QEC(Kn wedge Km1)} is valid
for all $1\le m\le n$.

%%%%%%%%%%%%%%%%%%%%%%%%%%%%%%%%%%%%%%%%%%%%%%%%%%%%%%%%%%%%%%
\setcounter{section}{2}
\section*{Appendix B: Calculating $\mathrm{QEC}(K_n\backslash P_4)$}
\setcounter{equation}{0}
\setcounter{theorem}{0}
\renewcommand{\thesection}{\Alph{section}}
\renewcommand{\theequation}{\Alph{section}.\arabic{equation}}

For $n\ge5$ 
let $K_n\backslash P_4$ be the graph obtained
by deleting three edges of $K_n$
chosen in such a way that the subgraph generated by them
is isomorphic to the path $P_4$.

\begin{proposition}
We have
\[
\mathrm{QEC}(K_n\backslash P_4)
=\frac{-3+\sqrt{5}}{2}\,,
\qquad n=5,6,
\]
and 
\[
\mathrm{QEC}(K_n\backslash P_4)
=\frac{-(n+6)+\sqrt{(n+6)^2+4n(n-10)}}{2n}\,,
\qquad n\ge 7.
\]
\end{proposition}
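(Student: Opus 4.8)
The plan is to compute $\mathrm{QEC}(K_n\backslash P_4)$ by the Lagrange multiplier method of Proposition \ref{02prop:QEC}, exploiting the block structure that arises from the orbit structure of the deleted path. Label the vertices so that the deleted edges form $P_4$ on vertices $1\sim2\sim3\sim4$ (meaning these three edges are \emph{removed}), with the remaining $n-4$ vertices $5,\dots,n$ still joined to everything. The graph distance is then $d(1,2)=d(2,3)=d(3,4)=2$ and all other distances are $1$, except $d(1,3)$ and $d(2,4)$ which are $1$ (since $1$ and $3$ are still joined), and $d(1,4)=1$ as well; so in fact $K_n\backslash P_4$ has diameter $2$, with exactly the three pairs $\{1,2\},\{2,3\},\{3,4\}$ at distance $2$. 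The symmetry group of this configuration swaps $1\leftrightarrow4$ and $2\leftrightarrow3$ simultaneously, and permutes $\{5,\dots,n\}$ arbitrarily. Hence any stationary point with $\lambda>-1$ can be sought in the form where $f$ is constant ($=\eta$, say) on $\{5,\dots,n\}$, takes value $\alpha$ on $\{1,4\}$ and value $\beta$ on $\{2,3\}$; this reduces the eigenvalue problem to a small system.

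First I would write the distance matrix in the block form corresponding to the partition $\{1,4\}\cup\{2,3\}\cup\{5,\dots,n\}$: each diagonal block of the first two groups is $J-I$ of size $2$ (the within-pair distance is $2$, realized as $2$ on the off-diagonal only if the pair is one of the deleted edges — note $\{1,4\}$ is \emph{not} a deleted edge, so within $\{1,4\}$ the distance is $1$, i.e. $J-I$; within $\{2,3\}$ the distance \emph{is} $2$, i.e. $2(J-I)$; wait — this asymmetry must be tracked carefully). This is the one delicate bookkeeping point: among the six pairs inside $\{1,2,3,4\}$, only $\{1,2\},\{2,3\},\{3,4\}$ are at distance $2$, so the $4\times4$ principal block of $D$ on $\{1,2,3,4\}$ is $J-I$ plus the adjacency matrix of $P_4$ (which contributes an extra $1$ on exactly those three pairs). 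Because vertex $2$ and vertex $3$ are each incident to two deleted edges while $1$ and $4$ are each incident to one, the natural symmetry is indeed $1\leftrightarrow4,\ 2\leftrightarrow3$, and I would instead use the refined ansatz $f=(\alpha,\beta,\beta,\alpha,\eta,\dots,\eta)$. Imposing $\langle\1,f\rangle=0$ gives $2\alpha+2\beta+(n-4)\eta=0$, and the three stationarity equations $Df=\lambda f+\tfrac{\mu}{2}\1$ collapse (after subtracting $\tfrac{\mu}{2}\1$ and using the constraint to eliminate the all-ones part) to a $3\times3$ linear system in $(\alpha,\beta,\eta)$ whose solvability condition is a polynomial in $\lambda$.

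The key computational step is then to extract that characteristic polynomial and identify its largest root. I expect it to factor, with one factor linear (coming from the "all of $\{5,\dots,n\}$ equal, nothing else" direction or its orthogonal complement within that block, contributing the eigenvalue $-1$ of $J-I$ type) and a quadratic factor of the form $n\lambda^2+(n+6)\lambda+c=0$ for an explicit constant $c$; matching the claimed answer $\lambda_+=\frac{-(n+6)+\sqrt{(n+6)^2+4n(n-10)}}{2n}$ forces $c=-n(n-10)$, i.e. the quadratic is $n\lambda^2+(n+6)\lambda-n(n-10)=0$. For $n=5,6$ the formula $\frac{-3+\sqrt5}{2}$ is stated separately, which strongly suggests that for small $n$ a \emph{different} stationary branch dominates — namely one supported only on $\{1,2,3,4\}$ (with $\eta=0$, possible exactly when $2\alpha+2\beta=0$, i.e. $\beta=-\alpha$), giving $\mathrm{QEC}$ equal to the corresponding value for the induced configuration on four vertices, and one checks $\frac{-3+\sqrt5}{2}$ is the larger of the two candidates precisely when $n\le6$ and the smaller when $n\ge7$ (at $n=7$ the quadratic gives $7\lambda^2+13\lambda+21$... so I would verify the crossover numerically).

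The main obstacle I anticipate is \emph{not} any single hard computation but the careful case analysis at the end: one must enumerate \emph{all} stationary values of $\lambda$ — those from the reduced $3\times3$ system, those from the "flat on $\{5,\dots,n\}$" degenerate directions, and (for small $n$) the branch living entirely on the deleted-path vertices — and then prove that the maximum is $\frac{-3+\sqrt5}{2}$ for $n\in\{5,6\}$ and the stated radical for $n\ge7$, including checking the boundary behaviour near $n=10$ where $n(n-10)$ changes sign (the formula remains correct there since the discriminant $(n+6)^2+4n(n-10)$ stays positive). The safe way to organize this is: (i) derive the full list of candidate eigenvalues as explicit functions of $n$; (ii) show the largest is always one of exactly two expressions; (iii) compare those two as functions of $n$ and locate the crossover; and (iv) confirm via Proposition \ref{02prop:QEC ge -1} that we have not missed the maximum by checking all candidates exceed $-1$ in the relevant ranges.
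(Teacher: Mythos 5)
Your overall framework (Lagrange multipliers via Proposition \ref{02prop:QEC}, block structure on $\{1,2,3,4\}\cup\{5,\dots,n\}$, two competing candidate values with a crossover between $n=6$ and $n=7$) matches the paper, but the execution as planned has a genuine gap: restricting to the symmetric ansatz $f=(\alpha,\beta,\beta,\alpha,\eta,\dots,\eta)$ is not justified and in fact misses exactly the branch that determines the answer for $n=5,6$. Stationary points of the Lagrangian need not be invariant under the automorphism $(1\,4)(2\,3)$; what is true is that, since $D$ commutes with this involution, one must also examine the \emph{odd} (antisymmetric) component. The paper's computation shows that the value $\frac{-3+\sqrt{5}}{2}$ arises from a stationary point with $\mu=0$, $g\equiv 0$ and $f=(f_1,(\lambda+1)f_1,-(\lambda+1)f_1,-f_1)$ on the path vertices, i.e.\ an antisymmetric vector with $\lambda$ a root of $\lambda^2+3\lambda+1=0$; this lies outside your ansatz. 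Your attempted substitute, the branch ``supported on $\{1,2,3,4\}$ with $\eta=0$, $\beta=-\alpha$,'' is symmetric and is in fact not a stationary point at all (it forces simultaneously $\lambda=-2$ and $\lambda=-1$), so your enumeration of candidates would output only the quadratic's root $\lambda_+$ and hence the wrong value for $n=5,6$.

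There is also a concrete computational error that signals the same problem: matching the stated root formula forces the constant term $c=-(n-10)$, not $c=-n(n-10)$, so the correct quadratic is $n\lambda^2+(n+6)\lambda-(n-10)=0$ (the paper derives exactly this in its Case 1, where indeed $f_1=f_4$, $f_2=f_3$, so your symmetric reduction would recover it if done correctly). With your constant, the $n=7$ quadratic $7\lambda^2+13\lambda+21=0$ has no real roots — the anomaly you noticed but left unresolved. To repair the proof you should either drop the symmetry ansatz and solve the full system as the paper does (eliminating $f_2,f_3$, with determinant $(\lambda^2+3\lambda+1)(\lambda^2+\lambda-1)$ and a case analysis on its vanishing), or carry out the symmetry reduction properly on both the even and odd isotypic components, the odd one yielding $\lambda=\frac{-3\pm\sqrt{5}}{2}$; the final comparison of $\lambda_+$ with $\frac{-3+\sqrt{5}}{2}$ can then be done algebraically rather than numerically.
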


Upon explicit calculation we consider the complete graph
$K_n=(V,\Tilde{E})$ with vertex set $V=\{1,2,\dots,n\}$
and realize $K_n\backslash P_4$ as $G=(V,E)$, where 
\[
E=\Tilde{E}\backslash\{\{1,2\},\{2,3\},\{3,4\}\}.
\]
The distance matrix of $G$ is written in a block matrix form:
\begin{equation}
D=\begin{bmatrix}
A & J \\
J & J-I
\end{bmatrix},
\qquad
A=\begin{bmatrix}
0 & 2 & 1 & 1 \\
2 & 0 & 2 & 1 \\
1 & 2 & 0 & 2 \\
1 & 1 & 2 & 0
\end{bmatrix}
\end{equation}
Then the quadratic form associated to $D$ is written as
\[
\left\langle \begin{bmatrix} f \\ g  \end{bmatrix},
D\begin{bmatrix} f \\ g \end{bmatrix} \right\rangle
=\langle f, Af\rangle
 +2\langle \1, f\rangle \langle \1, g\rangle
 +\langle \1, g\rangle^2
 -\langle g, g\rangle,
\]
where 
\[
f=\begin{bmatrix} f_1 \\ \vdots \\ f_4 \end{bmatrix} \in\mathbb{R}^4,
\qquad
g=\begin{bmatrix} g_1 \\ \vdots \\ g_{n-4} \end{bmatrix} 
\in\mathbb{R}^{n-4}.
\]
For $\lambda,\mu\in \mathbb{R}$ we set
\begin{align*}
\varphi(f,g,\lambda,\mu)
&=\langle f, Af\rangle
 +2\langle \1, f\rangle \langle \1, g\rangle
 +\langle \1, g\rangle^2
 -\langle g, g\rangle \\
&\qquad
-\lambda(\langle f,f \rangle+\langle g,g \rangle-1)
-\mu(\langle \1,f \rangle+\langle \1,g \rangle).
\end{align*}
A stationary point of $\varphi(f,g,\lambda,\mu)$ is
characterized as a solution to the equations:
\begin{equation}\label{2aeqn:original equations}
\frac{\partial\varphi}{\partial f_i}
=\frac{\partial\varphi}{\partial g_j}=0,
\qquad 1\le  i\le 4,
\quad 1\le j\le n-4,
\end{equation}
under the two constraints:
\begin{align}
\langle f,f \rangle+\langle g,g \rangle=1, 
\label{2eqn:constraint1} \\
\langle \1,f \rangle+\langle \1,g \rangle=0.
\label{2eqn:constraint2} 
\end{align}
After direct computation of derivatives and
application of \eqref{2eqn:constraint2}
the equations \eqref{2aeqn:original equations} become
\begin{align}
&-(\lambda+1)f_1+f_2=\frac{\mu}{2},
\label{2aeqn:eqn1} \\
&f_1-(\lambda+1)f_2+f_3=\frac{\mu}{2},
\label{2aeqn:eqn2} \\
&f_2-(\lambda+1)f_3+f_4=\frac{\mu}{2},
\label{2aeqn:eqn3} \\
&f_3-(\lambda+1)f_4=\frac{\mu}{2},
\label{2aeqn:eqn4} \\
&-(\lambda+1)g_j=\frac{\mu}{2}\,,
\qquad 1\le j\le n-4.
\label{2aeqn:eqn5}
\end{align}
Let $\mathcal{S}$ be the set of all solutions
$(f,g,\lambda,\mu)$ to the equations
\eqref{2aeqn:eqn1}--\eqref{2aeqn:eqn5}
under the two constraints
\eqref{2eqn:constraint1} and \eqref{2eqn:constraint2}.
By Propositions \ref{02prop:QEC} and \ref{02prop:QEC ge -1}
it is sufficient to seek a solution $(f,g,\lambda,\mu)$ with $\lambda>-1$.

We first see from \eqref{2aeqn:eqn5} that
$g_j=\gamma$ is constant independently of $j$ and
\begin{equation}\label{aeqn:g_j}
g_j=\gamma=\frac{-1}{\lambda+1}\,\frac{\mu}{2}\,.
\end{equation}
With the help of \eqref{2aeqn:eqn1} and \eqref{2aeqn:eqn4}
the variables $f_2$ and $f_3$ being eliminated,
\eqref{2aeqn:eqn2} and \eqref{2aeqn:eqn3} become
\begin{align}
&(-\lambda^2-2\lambda)f_1+(\lambda+1)f_4=(\lambda+1)\,\frac{\mu}{2},
\label{2aeqn:eqn6} \\
&(\lambda+1)f_1+(-\lambda^2-2\lambda)f_4=(\lambda+1)\,\frac{\mu}{2},
\label{2aeqn:eqn7}
\end{align}
respectively.
The determinant of the coefficients of the left-hand side
is given by
\[
\Delta(\lambda)=(-\lambda^2-2\lambda)^2-(\lambda+1)^2
=(\lambda^2+3\lambda+1)(\lambda^2+\lambda-1).
\]

(Case 1) $\Delta(\lambda)\neq0$.
The solution to \eqref{2aeqn:eqn6} and \eqref{2aeqn:eqn7} is 
unique and we obtain
\begin{equation}\label{aeqn:solutions f_i}
f_1=f_4=\frac{-(\lambda+1)}{\lambda^2+\lambda-1}\,\frac{\mu}{2}\,,
\qquad
f_2=f_3=\frac{-(\lambda+2)}{\lambda^2+\lambda-1}\,\frac{\mu}{2}\,.
\end{equation}
Then, using \eqref{aeqn:g_j} and \eqref{aeqn:solutions f_i},
the constraint \eqref{2eqn:constraint2} becomes
\begin{equation}\label{2aeqn:lambda and mu}
\bigg(\frac{4\lambda+6}{\lambda^2+\lambda-1}
+\frac{n-4}{\lambda+1}\bigg)\,\frac{\mu}{2}=0.
\end{equation}
If $\mu=0$, 
it follows from \eqref{aeqn:g_j} and \eqref{aeqn:solutions f_i}
that $f_i=g_j=0$,
which does not fulfill the constraint \eqref{2eqn:constraint1}.
Hence $\mu\neq0$ and from \eqref{2aeqn:lambda and mu} we obtain
\[
n\lambda^2+(n+6)\lambda-(n-10)=0.
\]
The roots are real and given by
\begin{equation}\label{2aeqn:lambda+-}
\lambda_{\pm}
=\frac{-(n+6)\pm \sqrt{(n+6)^2+4n(n-10)}}{2n}\,.
\end{equation}
For $\lambda=\lambda_\pm$ we have $\Delta(\lambda_\pm)\neq0$
and we see that
$f_i$ and $g_j$ are non-zero multiples of $\mu$.
Hence we may determine $\mu$ in such a way that 
\eqref{2eqn:constraint1} is satisfied.
Consequently, $\lambda_{\pm}$ appear in $\mathcal{S}$.

(Case 2) $\lambda^2+3\lambda+1=0$.
By $\lambda^2+2\lambda=-\lambda-1$ and $\lambda\neq -1$
it is easy to see that
\eqref{2aeqn:eqn6} and \eqref{2aeqn:eqn7} are reduced to 
\begin{equation}\label{2aeqn:f_1+f_4}
f_1+f_4=\frac{\mu}{2}\,.
\end{equation}
From \eqref{2aeqn:eqn1} and \eqref{2aeqn:eqn4} we have
\begin{equation}\label{2aeqn:f_2 and f_3}
f_2=(\lambda+1)f_1+\frac{\mu}{2},
\qquad
f_3=(\lambda+1)f_4+\frac{\mu}{2}
\end{equation}
so that
\begin{equation}\label{2aeqn:f_2+f_3}
f_2+f_3=(\lambda+3)\,\frac{\mu}{2}\,.
\end{equation}
From \eqref{2aeqn:f_1+f_4}, \eqref{2aeqn:f_2+f_3}
and \eqref{aeqn:g_j} we see that
\[
\langle \1,f\rangle=(\lambda+1)\,\frac{\mu}{2}\,,
\qquad
\langle \1,g\rangle=\frac{-(n-4)}{\lambda+1}\,\frac{\mu}{2}\,.
\]
Then the constraint \eqref{2eqn:constraint2} becomes
\[
\frac{1}{\lambda+1}
\big(\lambda^2+5\lambda-(n-8)\big)\,\frac{\mu}{2}=0.
\]
Since there is no common root of
$\lambda^2+3\lambda+1=0$ and
$\lambda^2+5\lambda-(n-8)=0$,  we obtain $\mu=0$.
Then by \eqref{2aeqn:f_1+f_4} and \eqref{2aeqn:f_2 and f_3}
we obtain
\[
f_2=(\lambda+1)f_1\,,
\qquad
f_3=(\lambda+1)f_4=-(\lambda+1)f_1\,,
\qquad 
f_4=-f_1\,.
\]
The second constraint \eqref{2eqn:constraint1} becomes
\[
f_1^2+(\lambda+1)^2f_1^2+(\lambda+1)^2f_1+f_1^2=1
\]
and $f_1$ is determined.
Thus, the root of $\lambda^2+3\lambda+1=0$,
that is,
\[
\lambda=\frac{-3\pm\sqrt{5}}{2}
\]
appears in $\mathcal{S}$.

(Case 3) $\lambda^2+\lambda-1=0$.
In a similar manner as in Case 2, we obtain
\[
\mu=0,
\qquad
g_j=\gamma=0,
\qquad
f_1=f_2=(\lambda+1)f_3,
\qquad
f_4=f_3.
\]
For the constraint \eqref{2eqn:constraint2} 
we have necessarily $f_i=g_j=0$, which however
do not fulfill \eqref{2eqn:constraint1}.
Namely, any root of $\lambda^2+\lambda-1=0$ does not appear
in $\mathcal{S}$.

From (Case 1)--(Case 3) we conclude that
\[
\mathrm{QEC}(G)
=\max\left\{\lambda_+, \frac{-3+\sqrt{5}}{2} \right\},
\]
where $\lambda_+$ is given as in \eqref{2aeqn:lambda+-}.
Finally, comparison of two numbers in the 
right-hand side is easy by simple algebra 
and we obtain the desired formula.

%%%%%%%%%%%%%%%%%%%%%%%%%%%%%%%%%%%%%%%%%%%%%%%%%%%%%%%%%%%%%%
\setcounter{section}{3}
\section*{Appendix C: List of All Graphs on Six Vertices}
\setcounter{equation}{0}
\setcounter{theorem}{0}
\renewcommand{\thesection}{\Alph{section}}
\renewcommand{\theequation}{\Alph{section}.\arabic{equation}}

For the self-contained reference we reproduce 
the list of all connected graphs on six vertices
following McKay \cite{McKay}.
Another list by Cvetkovi\'c--Petri\'c \cite{Cvetkovic-Petric1984}
is also useful, where the eigenvalues of the adjacency matrices
and related quantities are mentioned.

\begin{center}
\includegraphics[width=400pt]{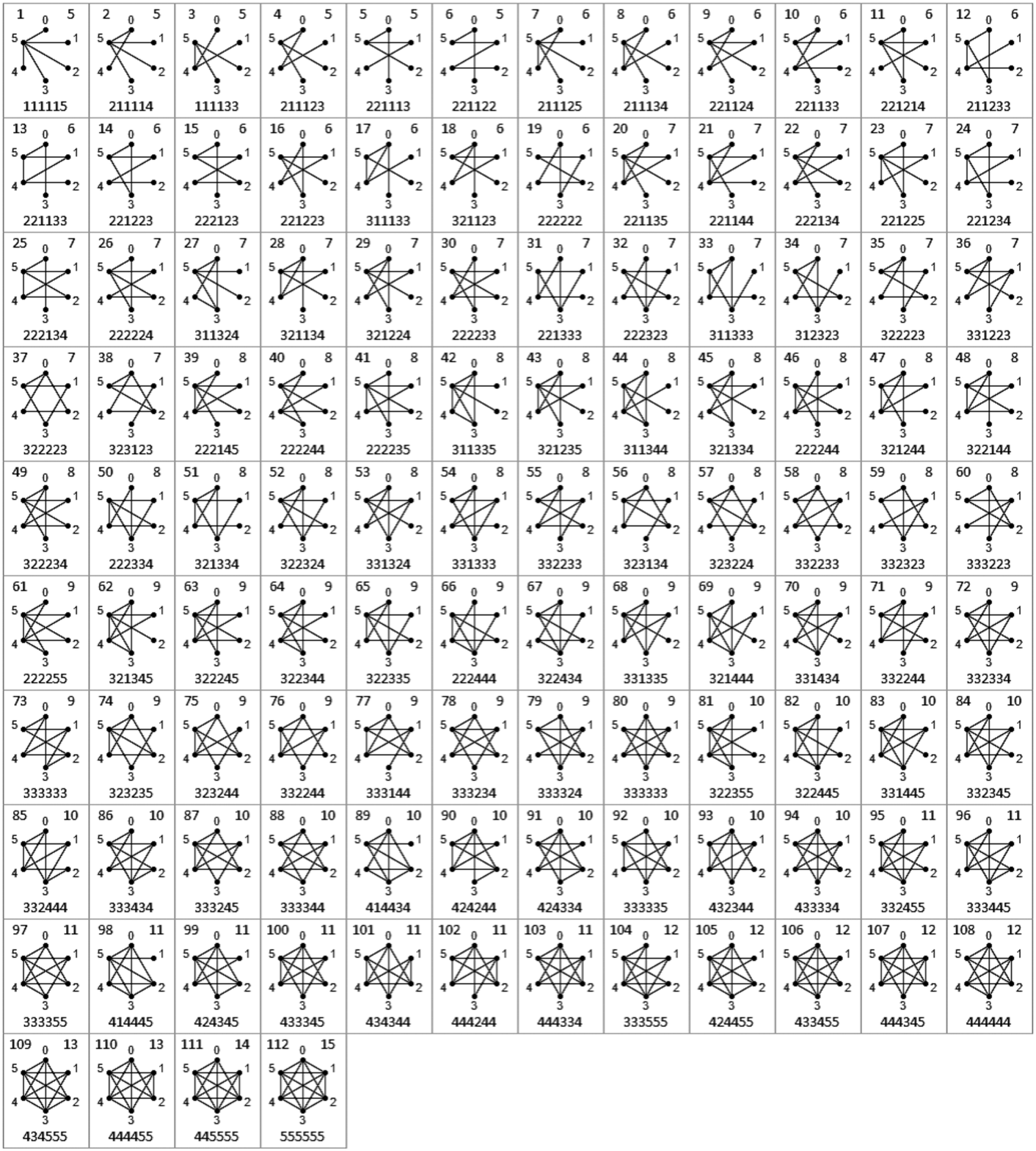}
\end{center}

%%%%%%%%%%%%%%%%%%%%%%%%%%%%%%%%%%%%%%%%%%%


\begin{thebibliography}{99}

\bibitem{Alfakih2018}
A. Y. Alfakih:
``Euclidean Distance Matrices and Their Applications in Rigidity Theory,"
Springer, Cham, 2018.

\bibitem{Aouchiche-Hansen2014}
M. Aouchiche and P. Hansen:
\textit{Distance spectra of graphs: a survey},
Linear Algebra Appl. {\bfseries 458} (2014), 301--386.

\bibitem{Balaji-Bapat2007}
R. Balaji and R. B. Bapat:
\textit{On Euclidean distance matrices},
Linear Algebra Appl. {\bfseries 424} (2007), 108--117.

\bibitem{Baskoro-Obata2021}
E. T. Baskoro and N. Obata:
\textit{Determining finite connected graphs 
along the quadratic embedding constants of paths},
Electron. J. Graph Theory Appl. {\bfseries 9} (2021), 539--560.

\bibitem{Bozejko89}
M. Bo\.zejko:
\textit{Positive-definite kernels, length functions on groups
and noncommutative von Neumann inequality},
Studia Math. \textbf{95} (1989), 107--118.

\bibitem{Cvetkovic-Petric1984}
D. Cvetkovi\'c and M. Petri\'c:
\textit{A table of connected graphs on six vertices},
Discrete Math. {\bfseries 50} (1984), 37--49.

\bibitem{Deza-Laurent1997}
M. M. Deza and M. Laurent:
``Geometry of Cuts and Metrics,''
Springer-Verlag, Berlin, 1997.

\bibitem{Graham-Winkler1985}
R. L. Graham and P. M. Winkler:
\textit{On isometric embedding of graphs},
Trans. Amer. Math. Soc. {\bfseries 288} (1985), 527--536.

\bibitem{Hora-Obata2007}
A. Hora and N. Obata:
``Quantum Probability and Spectral Analysis of Graphs,"
Springer, Berlin, 2007.

\bibitem{Irawan-Sugeng2021}
W. Irawan and K. A. Sugeng:
\textit{Quadratic embedding constants of hairy cycle graphs},
Journal of Physics: Conference Series {\bfseries 1722} (2021) 012046.

\bibitem{Jaklic-Modic2013}
G. Jakli\v{c} and J. Modic:
\textit{On Euclidean distance matrices of graphs},
Electron. J. Linear Algebra {\bfseries 26} (2013), 574--589.

\bibitem{Jaklic-Modic2014}
G. Jakli\v{c} and J. Modic:
\textit{Euclidean graph distance matrices of generalizations of the star graph},
Appl. Math. Comput. {\bfseries 230} (2014), 650--663.

\bibitem{Koolen-Shpectorov1994}
J. H. Koolen and S. V. Shpectorov:
\textit{Distance-regular graphs the distance matrix of 
which has only one positive eigenvalue},
European J. Combin. {\bfseries 15} (1994), 269--275.

\bibitem{Liberti-Lavor-Maculan-Mucherino2014}
L. Liberti, G. Lavor, N. Maculan and A. Mucherino:
\textit{Euclidean distance geometry and applications},
SIAM Rev. {\bfseries 56} (2014), 3--69.

\bibitem{Lou-Obata-Huang2022}
Z. Z. Lou, N. Obata and Q. X. Huang:
\textit{Quadratic embedding constants of graph joins},
to appear in Graphs and Combinatorics.
arXiv:2001.06752

\bibitem{Maehata2013}
H. Maehata:
\textit{Euclidean embeddings of finite metric spaces},
Discrete Math. {\bfseries 313} (2013), 2848--2856.

\bibitem{McKay}
B. McKay: \textit{All small connected graphs}, 
online (2022.06.10) \\
http://www.cadaeic.net/graphpics.htm

\bibitem{Mlotkowski2022}
W. M\l otkowski:
\textit{Quadratic embedding constants of path graphs},
Linear Algebra Appl. {\bfseries 644} (2022), 95--107.

\bibitem{MO-2018}
W. M\l otkowski and N. Obata:
On quadratic embedding constants of star product graphs,
Hokkaido Math. J. {\bfseries 49} (2020), 129--163.

\bibitem{Obata2011}
N. Obata:
\textit{Markov product of positive definite kernels
and applications to $Q$-matrices of graph products},
Colloq. Math. {\bfseries 122} (2011), 177--184.

\bibitem{Obata2017}
N. Obata:
\textit{Quadratic embedding constants of wheel graphs},
Interdiscip. Inform. Sci. {\bfseries 23} (2017), 171--174.

\bibitem{Obata2022}
N. Obata: 
\textit{Complete multipartite graphs of non-QE class}, 2022. 
arXiv:2206.05848

\bibitem{Obata-Zakiyyah2018}
N. Obata and A. Y. Zakiyyah:
\textit{Distance matrices and quadratic embedding of graphs},
Electronic J. Graph Theory Appl. {\bfseries 6} (2018), 37--60.

\bibitem{Purwaningsih-Sugeng2021}
M. Purwaningsih and K. A. Sugeng:
\textit{Quadratic embedding constants of squid graph and kite graph},
J. Phys.: Conf. Ser. {\bfseries 1722} (2021) 012047.

\bibitem{Schoenberg1935}
I. J. Schoenberg:
\textit{Remarks to Maurice Fr\'echet's article
``Sur la d\'efinition axiomatique d'une
classe d'espace distanci\'s vectoriellement applicable sur l'espace de Hilbert'',}
Ann. of Math. {\bfseries 36} (1935), 724--732.

\bibitem{Schoenberg1938}
I. J. Schoenberg:
\textit{Metric spaces and positive definite functions},
Trans. Amer. Math. Soc. {\bfseries 44} (1938), 522--536.

\bibitem{Tanaka2022}
H. Tanaka: \textit{Characterizing graphs with fully positive semidefinite Q-matrices},
2022. arXiv:2208.11002

\bibitem{Xue-Liu-Shu2020}
J. Xue, H. Lin and J. Shu:
\textit{On the second largest distance eigenvalue of a block graph},
Linear Algebra Appl. {\bfseries 591} (2020), 284--298.


\end{thebibliography}
\end{document}